\newcommand{\Z}{\mathbb{Z}}
\newcommand{\om}{\Omega}
\newcommand{\HH}{\mathcal{H}}
\newcommand{\DD}{\varLambda}
\newcommand{\vv}{\mathrm{v}}
\newcommand{\divg}{\mathrm{div}}
\newcommand{\grad}{\mathrm{grad}}
\newcommand{\qq}{q}
\newcommand{\dx}{\mathrm{d}x}
\newcommand{\ii}{k}
\newcommand{\jj}{l}
\newcommand{\sysize}{b}
\newcommand{\Face}{\mathcal{F}}
\renewcommand{\forall}{\text{ for all }}
\newcommand{\mat}[1]{\mathtt{{#1}}}
\newcommand{\ipm}[1]{\langle #1\rangle}
\newcommand{\zs}[1]{{#1}}
\title[Stability of SAT methods for tents]
{Stability of Structure-Aware Taylor Methods for Tents}
\author[J.~Gopalakrishnan]{Jay Gopalakrishnan}
\address{Portland State University, PO Box 751, Portland, OR 97207, USA }
\email{gjay@pdx.edu}
\author[Z.~Sun]{Zheng Sun}
\address{(Corresponding author.) Department of Mathematics,
The University of Alabama,
Box 870350,
Tuscaloosa, AL 35487, USA}
\email{zsun30@ua.edu}
\begin{document}


\begin{abstract}
  Structure-aware Taylor (SAT) methods are a class of timestepping
  schemes designed for propagating linear hyperbolic solutions within
  a tent-shaped spacetime region. Tents are useful to design explicit
  time marching schemes on unstructured advancing fronts with built-in
  locally variable timestepping for arbitrary spatial and temporal
  discretization orders. The main result of this paper is that an
  $s$-stage SAT timestepping within a tent is weakly stable under the
  time step constraint $\Delta t \leq Ch^{1+1/s}$, where $\Delta t$ is
  the time step size and $h$ is the spatial mesh size.  Improved
  stability properties are also presented for high-order SAT time
  discretizations coupled with low-order spatial polynomials. A
  numerical verification of the sharpness of proven estimates is also
  included.
\end{abstract}

\subjclass[2020]{65M12}
\keywords{Structure aware Taylor method, tent pitching, linear
  hyperbolic equations, stability analysis, discontinuous Galerkin
  methods.}

\maketitle

\section{Introduction}

Spacetime methods for solving evolution equations can easily
incorporate widely varying spatio-temporal grid sizes and
discretization orders.  However, to be competitive with standard
timestepping methods, spacetime methods must have memory requirements
and coupling of degrees of freedom that are comparable to standard
timestepping methods.  Such competitive spacetime methods can indeed
be constructed for hyperbolic systems by partitioning the spacetime
region into tent-shaped subregions satisfying causality: one then
propagates numerical solutions asynchronously across an unstructured
advancing front.  The process of creating a mesh of tents by advancing
spacetime fronts is referred to as ``tent pitching'' and recent
methods like the Mapped Tent Pitching (MTP)
schemes~\cite{GopalHochsSchob20, gopalakrishnan2017mapped} have proven
themselves to be competitive tent-based alternatives to standard
timestepping schemes, especially on complex geometries.  Many previous
works, both in the engineering and the mathematics literature, have
constructed tent-based numerical
methods~\cite{abedi2018spacetime,falk1999explicit,miller2008spacetime,monk2005discontinuous}.

The purpose of this paper is to provide a complete stability analysis
of the structure-aware Taylor (SAT) methods, a class of timestepping
methods suitable for the above-mentioned MTP schemes applied to linear
hyperbolic systems.  To understand the origins of the SAT
timestepping, consider the computational drawbacks that can arise from
the lack of tensor-product structure in  tent-shaped domains,
including the inability to use standard discretizations combined with
timestepping within a tent.  A proposal to overcome such difficulties
was made in~\cite{gopalakrishnan2017mapped}. The idea is to map the
non-tensor-product tent region to a tensor-product cylindrical
region. This made fully explicit timestepping within spacetime tents,
combined with a standard discontinuous Galerkin (DG) spatial
discretization, possible.
Indeed, after the semidiscretization, the
unknown function of a pseudotime variable $\hat{t}$, introduced later as
$\hat u_h(\hat{t})$, satisfies the ordinary differential equation (ODE) 
\begin{equation}
  \label{eq:MuAuODE}
  \frac{\mathrm d}{\mathrm d \hat{t}} \left(M(\hat{t}) \hat u_h\right) = A\,\hat u_h  
\end{equation}
with a time-dependent mass
operator $M(\hat{t})$ and a differential operator $A$, defined later  in~\eqref{eq-M0M1A}. Introducing
$y(\hat{t}) = M(\hat{t}) \hat u_h(\hat{t})$, this ODE can be restated as 
\begin{equation}
  \label{eq:nonauto}
  \frac{\mathrm d y}{\mathrm d \hat{t}}  = 
  A M(\hat{t})^{-1} y.  
\end{equation}
Although high-order standard Runge--Kutta timestepping can be
applied to solve~\eqref{eq:nonauto}, the resulting solutions were not
observed to achieve the expected high orders of accuracy, as reported
in~\cite{GopalHochsSchob20, gopalakrishnan2020structure}.
New timestepping methods, incorporating the structure of the
time-dependent mass matrix arising from tents, were then developed.
Specifically, the SAT timestepping was proposed
in~\cite{GopalHochsSchob20} to address this issue for linear
hyperbolic systems. Its extension to nonlinear hyperbolic systems,
named SARK timestepping, was proposed
in~\cite{gopalakrishnan2020structure}.

Energy-type stability estimates for these new timestepping schemes
remained unknown until \cite{DrakeGopalSchob21}, where a framework for 
the stability and error analysis of the MTP methods for linear
hyperbolic systems was constructed. For SAT schemes, the stability
of the first- and the second-order methods were proved in \cite{DrakeGopalSchob21}. In particular, the analysis requires a $3/2$-Courant--Friedrichs--Levy (CFL) condition for the stability of the second-order methods. Here, as in \cite{burman2010explicit}, $\eta$-CFL condition refers to the time step constraint $\Delta t \leq C h^\eta$, for some fixed constant $C$. $\Delta t$ is the time step size and $h$ is the spatial mesh size. Furthermore, based on the numerical
tests in \cite[Section 6.1]{gopalakrishnan2020structure},
extrapolating from the provable cases, it was conjectured in 
\cite{DrakeGopalSchob21} that the SAT method is stable under a
$(1+1/s)$-CFL condition for MTP schemes, where $s$ is the order of the
SAT time discretization. This more restrictive CFL condition is also
required by and known to be necessary for standard explicit Runge--Kutta methods when applied to hyperbolic equations in certain cases
\cite{burman2010explicit, zhang2004error, xu2019l2,
  xu2020superconvergence}. For the SAT methods, the rigorous stability
proof is only available for $s \leq 2$. The analysis for the higher-order methods remained open.

In this paper, we prove the above-mentioned conjecture for any $s$
through an energy-type stability analysis. Since naive eigenvalue analyses with
the stability regions may lead to insufficient and even misleading results
\cite{iserles2009first,levy1998semidiscrete,tadmor2002semidiscrete,sun2017rk4},
energy arguments are widely used for stability analysis, especially for
systems resulting from discretizations of partial differential
equations. For implicit time marching methods or dissipative
equations, universal stability analysis is well documented
\cite{butcher2016numerical, gottlieb2001strong}. However for
hyperbolic type problems with high-order explicit methods, a
systematic analysis was not available until recently. Based on the
techniques developed in the analysis of the fourth order Runge--Kutta
methods \cite{sun2017rk4,ranocha2018l_2}, in \cite{sun2019strong}, Sun
and Shu proposed a general framework on analyzing the strong stability of
explicit Runge--Kutta (Taylor) methods of arbitrary order. This work
also relates to the fully discrete analysis of Runge--Kutta DG methods
in \cite{xu2019l2,xu2020superconvergence}. We also refer to
\cite{sun2021enforcing,sun2022energy} on further extensions of
the work in \cite{sun2019strong}. Results on nonlinear or
non-autonomous problems can be found in
\cite{ranocha2021strong,ranocha2020energy}.

The main challenge in the analysis of the SAT method is to
appropriately handle the mass matrix that is affine-linear in a
pseudotime variable arising from the mapping. It leads to the
following complications that have not been encountered in the analysis
of standard Runge--Kutta (Taylor) methods. First, the numerical
dissipation will depend on the time derivative of the mass
matrix. Second, the high-order spatial derivatives are defined via a
recursive formula, rather than a simple matrix power. Third, there
are extra tail terms arising in the simplification of energy equality,
and finding an appropriate way of grouping the terms becomes an
issue. The key ingredient for solving these issues is to introduce a
novel discrete integration by parts formula for the MTP schemes (developed in Lemma~\ref{lem-ibp} below). The analysis of the SAT
method can be viewed as a generalization of the framework developed
in~\cite{sun2019strong}. In the special case of constant mass matrix,
many results in Section~\ref{sec-sat} reduce to the known 
estimates for the standard Runge--Kutta or Taylor methods.

Furthermore, we show that when a low-order spatial discretization is
coupled with a high-order SAT timestepping method, the fully discrete
scheme may exhibit improved stability properties with a relaxed CFL
condition. Consider symmetric linear hyperbolic systems with constant
coefficients, we show that with spatially piecewise constant
elements ($p = 0$), the SAT scheme is strongly stable under the usual
CFL condition for any order. When the spatial polynomial degree
satisfies $0<p\leq (s-1)/2$, then we show that the numerical method is
weakly stable under the $(1+1/(2s-2p))$-CFL condition. The key step of
the proof is to give an explicit characterization of the derivative
operator in the SAT scheme (found in Lemma~\ref{lem-lowXk} below). The
estimates are verified to be sharp within a subtent using the linear
advection equation in one dimension (in Section \ref{sec-num}). This
investigation of improved stability when employing low-order
polynomials is inspired by a similar study of the standard
Runge--Kutta DG methods for linear advection by Xu et
al. in~\cite{xu2019l2}.
It turns
out that the SAT-DG methods in this paper exhibit stability properties
that are different from those of the standard Runge--Kutta DG methods for linear
autonomous equations---for the latter, strong stability can be
achieved for $p>0$ when sufficiently high-order timestepping methods
are used. This is not the case for SAT-DG methods for
\eqref{eq:MuAuODE}, whose weak
stability properties seem more in line with those of nonautonomous
equations, which is perhaps not surprising since~\eqref{eq:nonauto} is not autonomous.

The rest of the paper is organized as follows. In Section
\ref{sec-prelim}, we briefly outline the MTP scheme and state the
corresponding ordinary differential equation (ODE) system arising from
the semidiscretization after the tent mapping. The weak stability of
the SAT method under the $(1+1/s)$-CFL condition is proved in
Section~\ref{sec-sat}. In Section~\ref{sec-low}, we prove the improved
stability properties of SAT-DG schemes with low-order spatial
polynomials. Then we show the sharpness of our estimates in
Section~\ref{sec-num} using the one-dimensional example. Proofs of all
the lemmas in these sections are presented in
Section~\ref{sec:proofs-sat} in the same order they appeared
previously.  Finally, conclusions and future work are discussed in
Section~\ref{sec-conclusion}.

\section{Tents, Maps, and the SAT timestepping}\label{sec-prelim}

In this section, we describe a model symmetric linear hyperbolic
problem and how one constructs an advancing front solution 
on unstructured meshes using spacetime tents. Here we collect
preliminary results from elsewhere that we need for the subsequent
stability analysis.

Let $\Omega \subseteq \mathbb{R}^d$ represent the spatial domain of
the simulation and let
$u = u(x,t): \Omega\times [0,t_{\max}] \to \mathbb{R}^\sysize$ be a
vector-valued function. Our goal is to solve the symmetric linear
hyperbolic system
\begin{equation}\label{eq-slhs}
\partial_t g(u) + \divg_x f(u) = 0,
\end{equation}
with 
\begin{equation}\label{eq-gf}
[g(u)]_l = \sum_{k = 1}^\sysize \mathcal{G}_{lk} u_k, \quad [f(u)]_{lj} = \sum_{k = 1}^\sysize \mathcal{L}_{lk}^{(j)} u_k, \quad l = 1,\cdots, \sysize, \quad j = 1,\cdots d.
\end{equation}
Here
$\mathcal{G} = [\mathcal{G}_{lk}]: \Omega\to
\mathbb{R}^{\sysize\times \sysize}$ and
$\mathcal{L}^{(j)} = [\mathcal{L}^{(j)}_{lk}] : \Omega \to
\mathbb{R}^{\sysize\times \sysize}$ are symmetric bounded
matrix-valued functions, and $\mathcal{G}$ is uniformly positive definite on $\overline{\Omega}$. Furthermore, let us assume that $\sum_{j=1}^d \partial_j \mathcal{L}^{(j)} = 0$ in the sense of distributions \cite[Subsection~2.1]{DrakeGopalSchob21}, so that the (weighted) $L^2$ energy of \eqref{eq-slhs} is nonincreasing in time. To avoid unnecessary technicalities, we consider periodic boundary conditions or compactly supported solutions in this paper, although more general boundary conditions
can be handled using the techniques outlined in
\cite[Subsection~2.1]{DrakeGopalSchob21}. We proceed to build a spacetime mesh of tents atop $\om$.

First, we mesh the spatial domain.  Let $\mathcal{T}$ denote a shape
regular and conforming simplicial mesh of the spatial domain
$\Omega$. Let $h$ be the mesh size parameter equaling the maximal
diameter of elements in $\mathcal T$. We march forward in time by
considering a sequence of advancing fronts
$\vphi_i: \om \to \mathbb R$, $i=0, \ldots, m$. Here
$\{\vphi_i\}_{i=0}^m$ are continuous piecewise linear functions,
specifically the lowest-order Lagrange finite element functions, on the
mesh $\mathcal{T}$.  In particular, we have $\vphi_0(x) \equiv 0$ and
$\vphi_m(x) \equiv t_{\max}$. Given a vertex $\vv$, we define $\Omega^\vv$ to
be the vertex patch which includes spatial simplices connecting to
$\vv$. We advance from $\vphi_i$ to $\vphi_{i+1}$ over $\Omega^\vv$ by
erecting a spacetime tent pole at the vertex $\vv$ and forming the
tent
\begin{equation*}
  T_i^\vv = \{(x,t):
  \;x\in \Omega^\vv,\;
  \vphi_i(x)\leq t\leq \vphi_{i+1}(x)\}.
\end{equation*}
To ensure that each spacetime tent encloses the domain of dependence of all its points, we employ  the ``causality condition''
\begin{equation}\label{eq-causa}
\|(\grad_x \vphi_i)(x)\|_{2} < \frac{1}{c_{\max}}, \quad x \in \Omega, \quad i = 0,\ldots, m,
\end{equation}
where $c_{\max}$ is a strict upper bound of the maximal hyperbolic wave speed. For a graphical illustration of the tent-pitching meshing process, we refer to \cite[Figure 1]{gopalakrishnan2017mapped}.

In MTP schemes, one maps the tents to domains which are a tensor
product of a spatial vertex patch and a ``pseudotime'' interval in order to 
gain efficiency and to allow reutilization of common spatial
discretization tools and tensor-product techniques like timestepping.
Consider a single tent  over any given vertex patch $\Omega^\vv,$
\[
T = \{(x,t): x\in \Omega^\vv, \vphi_{\mathrm{bot}}\leq t\leq \vphi_{\mathrm{top}}\}.
\]
Here $\vphi_{\mathrm{bot}}$ and $\vphi_{\mathrm{top}}$ are restrictions of $\vphi_i$ and $\vphi_{i+1}$ over $\Omega^\vv$. They are also continuous and piecewise linear. The goal is to solve \eqref{eq-slhs} locally within the tent $T$ from $t = \vphi_{\mathrm{bot}}$ to $t = \vphi_{\mathrm{top}}$ using a timestepping technique. To this end, we transform $T$ into a tensor product domain $\hat{T} = \Omega^\vv\times [0,1]$. See \cite[Figure 2]{gopalakrishnan2017mapped}. The required change of variables is given by $(x,t) = (x, \vphi(x,\hat{t}))$, where 
\begin{equation*}
 \vphi(x,\hat{t}) = (1-\hat{t})\vphi_{\mathrm{bot}}(x) + \hat{t}\vphi_{\mathrm{top}}(x) = \vphi_{\mathrm{bot}}(x) + \hat{t} \delta(x). 
\end{equation*}
Here, 
$\delta(x) = \vphi_{\mathrm{top}}(x) - \vphi_{\mathrm{bot}}(x)$ and $\hat t$ is what we referred to above as the pseudotime variable.
From the causality condition \eqref{eq-causa}, we know that $\delta \leq C h$ for some constant $C$ depending on the wavespeed.
In  \cite[Theorem 3.1]{gopalakrishnan2017mapped} it is shown that 
the transformed unknown $\hat{u}(x,\hat{t}) = u(x,t)$ solves the equation 
\begin{equation}\label{eq-mapped}
\partial_{\hat{t}}\left(g(\hat{u}) -  f(\hat{u})\grad_x\vphi\right) + \divg_x \left(\delta f(\hat{u})\right) = 0, \quad (x,\hat{t})\in \hat{T}.
\end{equation}
Hence MTP schemes proceed by first semidiscretizing~\eqref{eq-mapped}
in space and then timestepping in pseudotime.

Let us now freeze the pseudotime variable and introduce notations associated with the spatial discretization. For the spatial discretization we use the standard DG space 
\begin{equation*}
  V_h = \{v: v|_K \in [P_p(K)]^\sysize, \text{ for all }
  K \in \mathcal{T} \text{ and } K\subseteq \Omega^\vv\}.
\end{equation*}
Here $P_p(K)$ is the space of polynomials on $K$ of degree less than or equal to $p$. Let $\Face^\vv$ be the set of facets on the spatial vertex patch $\Omega^\vv$. On each facet $F$, let $\nu = [\nu_1,\cdots,\nu_d]$ denote a spatial unit normal vector, whose direction is currently irrelevant.  Across each facet $F$, we define the jump $\lb v\rb = \lim_{\veps \to 0^+} v(x + \veps \nu) - v(x - \veps \nu)$ and the average $\{v\} = \lim_{\veps \to 0^+} (v(x + \veps \nu) + v(x - \veps \nu))/2$. Furthermore, 
we introduce the following notations
\begin{align*}
	\ip{v,w} =& \sum_{K\subseteq \Omega^\vv}\int_K v\cdot w\,\dx, \quad 
	&
		\text{for vector-valued functions }
		v,w:\Omega^\vv\to  \mathbb{R}^\sysize,\\
	\ipm{v,w} =&
	\sum_{K\subseteq \Omega^\vv}\int_K\left(\sum_{i=1}^b\sum_{j=1}^d v_{ij}w_{ij}\right)\,\dx, \quad
	&\begin{array}{r}
		\text{for matrix-valued functions}\\
		v=[v_{ij}], w=[w_{ij}]: \Omega^\vv\to \mathbb{R}^{\sysize\times d},
	\end{array}\\
	\ip{v,w}_{\Face^\vv} =& \int_{\Face^\vv} v\cdot w \,\dx, \quad 
	&
 		\text{for vector-valued functions }
		v,w:\Face^\vv\to  \mathbb{R}^\sysize,
\end{align*}
and $\| \cdot \| = (\cdot, \cdot)^{1/2}$. Note that the vertex patch $\Omega^\vv$ is omitted in the notation for the $L^2(\Omega^\vv)$-norm and inner product
to lighten the notation 
since a substantial part of our analysis will be carried out on a single given $\Omega^\vv$.
Given a selfadjoint operator $B: V_h\to V_h$, let $\ip{v,w}_B = \ip{Bv,w}$, $\nm{v}_B = \sqrt{\ip{v,v}_B}$ if $B$ is positive definite, and $\snm{v}_B = \sqrt{\ip{v,v}_B}$ if $B$ is positive semidefinite.

Applying standard DG discretization techniques to \eqref{eq-mapped},
we obtain the following semidiscrete scheme: find $\hat{u}_h(\cdot, \hat t) \in V_h$
such that
\begin{equation}\label{eq-dgscheme}
\ip {\partial_{\hat{t}}[g(\hat{u}_h)-f(\hat{u}_h)\grad_x\vphi] , v} = \ipm{\delta f(\hat{u}_h),\grad_x v} + \ip{\delta {\hat{F}}^{\nu},  \lb v\rb}_{\Face^\vv}, \quad \forall v \in V_h,
\end{equation}
where the numerical flux ${\hat{F}}^{\nu}$ is given by
\begin{equation*}
{\hat{F}}^{\nu} =  \mathcal{D}\{\hat{u}_h\} - S\lb \hat{u}_h\rb,
\end{equation*}
using the matrix functions 
$\mathcal{D} = \sum_{j = 1}^d\nu_j\mathcal{L}^{(j)}$ and  $S$,  a $\sysize\times \sysize$ constant symmetric positive semidefinite stabilization matrix. Let the 
operators $M_0, M_1, A : V_h \to V_h$ be such that their action on 
any given $\hat{u}_h\in V_h$ is defined by
\begin{subequations}\label{eq-M0M1A}
\begin{align}
\ip{ M_0 \hat{u}_h, v} =& \ip{g(\hat{u}_h)-f(\hat{u}_h)\grad_x \vphi_{\mathrm{bot}}, v},\label{eq-M0M1A-M0}\\
\ip{ M_1 \hat{u}_h, v} =& \ip{f(\hat{u}_h)\grad_x\delta, v},\label{eq-M0M1A-M1}\\
\ip{A\hat{u}_h, v} =& \ipm{\delta f(\hat{u}_h),\grad_x v} + \ip{\delta {\hat{F}}^{\nu},  \lb v\rb}_{\Face^\vv},\label{eq-M0M1A-A}
\end{align}	
\end{subequations}
for all $v\in V_h$.
Furthermore, let 
$
M(\hat{t}) = M_0 - \hat{t} M_1.
$
Then the DG scheme in \eqref{eq-dgscheme} can be written as 
\begin{equation}\label{eq-ODE}
  (M\hat{u}_h)_{\hat t} = A\hat{u}_h, \quad \hat{t} \in[0,1].
\end{equation}
where we have denoted 
the derivative $\mathrm{d} (M \hat u_h) / \mathrm{d}\hat t$ by $(M\hat{u}_h)_{\hat t}$.
Note that $M_0$, $M_1$ and $A$ are independent of $\hat{t}$, while
$M = M(\hat{t})$ is an affine linear function of $\hat{t}$. Since
$\delta$ vanishes along the boundary of $\Omega^\vv$, there is no
coupling through the numerical fluxes between $\Omega^\vv$ and its
neighboring vertex patches. Hence the system \eqref{eq-ODE} is defined
locally within $\Omega^\vv$, allowing us to localize all stability
considerations.
We will need the following  properties of the above-defined operators
established in \cite[Lemmas~3.1 and 3.2]{DrakeGopalSchob21}. We remark that although \cite{DrakeGopalSchob21} additionally assumed what we state later in~\eqref{eq:assume-pwconst}, the proofs of the propositions we list in this section, found there, do not use that assumption.

\begin{proposition}
  \label{prop-assp}
  \;	
  \begin{enumerate}
  \item The operators $M_0$, $M_1$, and $M$ are selfadjoint. In
    addition, the causality condition implies that $M_0$ and $M$ are
    positive definite.
  \item The operator 
    \begin{equation}\label{eq-D}
      D :=  - (A^\top + A + M_1) \geq 0
    \end{equation}
    is positive semidefinite. Here $A^\top$ is the adjoint operator of $A$ under $\ip{\cdot,\cdot}$. 
  \end{enumerate}
\end{proposition}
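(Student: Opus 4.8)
The plan is to reduce both assertions to elementary linear algebra of the coefficient matrices together with one elementwise integration by parts. For part~(1), I would first expand the bilinear forms defining $M_0$ and $M_1$ in~\eqref{eq-M0M1A-M0}--\eqref{eq-M0M1A-M1} componentwise: since $[g(u)]_l=\sum_k\mathcal{G}_{lk}u_k$ and $[f(u)]_{lj}=\sum_k\mathcal{L}^{(j)}_{lk}u_k$ with $\mathcal{G}$ and each $\mathcal{L}^{(j)}$ symmetric, one gets $\ip{M_0u,v}=\int_{\Omega^\vv}u^{\top}(\mathcal{G}-\sum_j(\partial_j\vphi_{\mathrm{bot}})\mathcal{L}^{(j)})v$ and $\ip{M_1u,v}=\int_{\Omega^\vv}u^{\top}(\sum_j(\partial_j\delta)\mathcal{L}^{(j)})v$, which are manifestly symmetric in $(u,v)$; hence $M_0$ and $M_1$ are selfadjoint, and so is $M=M_0-\hat t M_1$. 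For positive definiteness I would use $\vphi(\cdot,\hat t)=\vphi_{\mathrm{bot}}+\hat t\,\delta$ to combine the two contributions into $\ip{M(\hat t)u,u}=\int_{\Omega^\vv}u^{\top}(\mathcal{G}-\sum_j(\partial_j\vphi(\cdot,\hat t))\mathcal{L}^{(j)})u$, then apply the wave-speed bound $|\xi^{\top}(\sum_j\nu_j\mathcal{L}^{(j)})\xi|\le c_{\max}\|\nu\|_2\,\xi^{\top}\mathcal{G}\xi$ (valid for all $\nu\in\mathbb{R}^d$, $\xi\in\mathbb{R}^\sysize$, since $c_{\max}$ strictly bounds the eigenvalues of $\mathcal{G}^{-1}\sum_j\nu_j\mathcal{L}^{(j)}$ for unit $\nu$) with $\nu=\grad_x\vphi(\cdot,\hat t)$. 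The remaining point is that $\grad_x\vphi(\cdot,\hat t)=(1-\hat t)\grad_x\vphi_{\mathrm{bot}}+\hat t\,\grad_x\vphi_{\mathrm{top}}$ is a convex combination of two gradients whose norms the causality condition~\eqref{eq-causa} keeps strictly below $1/c_{\max}$, so $c_{\max}\|\grad_x\vphi(\cdot,\hat t)\|_2<1$ uniformly for $\hat t\in[0,1]$; together with the uniform positive definiteness of $\mathcal{G}$ this makes $\mathcal{G}-\sum_j(\partial_j\vphi(\cdot,\hat t))\mathcal{L}^{(j)}$ uniformly positive definite, whence $M(\hat t)$ is positive definite, and in particular $M_0=M(0)$ is positive definite.

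For part~(2), since all operators are real and $M_1$ is selfadjoint, $\ip{Du,u}=-2\ip{Au,u}-\ip{M_1u,u}$, so it suffices to show $2\ip{Au,u}+\ip{M_1u,u}\le0$. Starting from~\eqref{eq-M0M1A-A}, I would integrate by parts on each element $K\subseteq\Omega^\vv$ in the volume term $\ipm{\delta f(u),\grad_x u}$: using the symmetry of each $\mathcal{L}^{(j)}$ and the hypothesis $\sum_j\partial_j\mathcal{L}^{(j)}=0$ (read distributionally when the $\mathcal{L}^{(j)}$ are only bounded), one has $\sum_j\partial_j(u^{\top}\mathcal{L}^{(j)}u)=2\sum_j u^{\top}\mathcal{L}^{(j)}\partial_j u$, which after moving $\delta$ through the derivative gives $\ipm{\delta f(u),\grad_x u}=\tfrac12\sum_K\int_{\partial K}\delta\,u^{\top}\mathcal{D}_n u-\tfrac12\ip{M_1u,u}$, where $\mathcal{D}_n=\sum_j n_j\mathcal{L}^{(j)}$ and $n$ is the outward normal of $K$. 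Summing over elements, the $\partial\Omega^\vv$-contributions vanish since $\delta=0$ there, and on each interior facet the two neighbouring contributions combine, using $\mathcal{D}^{\top}=\mathcal{D}$ and $(u^+)^{\top}\mathcal{D}u^+-(u^-)^{\top}\mathcal{D}u^-=2\{u\}^{\top}\mathcal{D}\lb u\rb$, to give $\sum_K\int_{\partial K}\delta\,u^{\top}\mathcal{D}_n u=-2\ip{\delta\,\mathcal{D}\{u\},\lb u\rb}_{\Face^\vv}$ with $\mathcal{D}=\sum_j\nu_j\mathcal{L}^{(j)}$; hence $\ip{Au,u}=-\ip{\delta\,\mathcal{D}\{u\},\lb u\rb}_{\Face^\vv}-\tfrac12\ip{M_1u,u}+\ip{\delta\hat{F}^{\nu},\lb u\rb}_{\Face^\vv}$. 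Since $\ip{\delta\hat{F}^{\nu},\lb u\rb}_{\Face^\vv}=\ip{\delta\,\mathcal{D}\{u\},\lb u\rb}_{\Face^\vv}-\ip{\delta\,S\lb u\rb,\lb u\rb}_{\Face^\vv}$, the central-flux pieces cancel, leaving $\ip{Au,u}=-\tfrac12\ip{M_1u,u}-\ip{\delta\,S\lb u\rb,\lb u\rb}_{\Face^\vv}$, so that $2\ip{Au,u}+\ip{M_1u,u}=-2\ip{\delta\,S\lb u\rb,\lb u\rb}_{\Face^\vv}\le0$ because $\delta\ge0$ on $\Omega^\vv$ and $S$ is symmetric positive semidefinite. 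This is precisely the claim $D\ge0$ in~\eqref{eq-D}; indeed $\ip{Du,u}=2\ip{\delta\,S\lb u\rb,\lb u\rb}_{\Face^\vv}$.

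I expect the main obstacle to be the sign bookkeeping in part~(2): tracking the facet orientations and the $+/-$ sides consistently through the element-by-element integration by parts so that the interior-facet terms assemble into exactly $-2\ip{\delta\,\mathcal{D}\{u\},\lb u\rb}_{\Face^\vv}$ and then cancel the central flux with no leftover. What makes this delicate is that only the symmetric part of $A$ enters $D$, so the generally nonsymmetric advective contribution must be annihilated rather than merely estimated; it is precisely the divergence-free hypothesis $\sum_j\partial_j\mathcal{L}^{(j)}=0$ that yields the clean $-\tfrac12\ip{M_1u,u}$ remainder from the volume integration by parts and so makes this cancellation possible. When $M_1=0$ --- the constant-metric, autonomous case --- the identity collapses to the familiar DG energy relation $\ip{Au,u}=-\ip{\delta\,S\lb u\rb,\lb u\rb}_{\Face^\vv}\le0$.
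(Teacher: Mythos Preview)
Your argument is correct and is essentially the standard energy calculation behind this result. Note, however, that the paper does not actually supply its own proof of this proposition: it simply records the statement and refers to \cite[Lemmas~3.1 and~3.2]{DrakeGopalSchob21} for the details. So there is no ``paper's proof'' to compare against here beyond that citation; your write-up is, in effect, a self-contained reconstruction of what one would find in that reference. In particular, the identity you arrive at, $\ip{Du,u}=2\ip{\delta S\lb u\rb,\lb u\rb}_{\Face^\vv}$, is exactly the formula the paper later invokes (from the same cited lemma) in the proof of Lemma~\ref{lem-L}.
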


The semidefiniteness of $D$ was crucially exploited in the stability
analyses of \cite{DrakeGopalSchob21} and will also be crucial in this
paper. To understand why this is important, we reproduce a simple argument
essentially contained in \cite[Lemma~3.3]{DrakeGopalSchob21}.

\begin{proposition}[Stability of the semidiscrete scheme]
  Solutions of~\eqref{eq-ODE} are stable in the weighted $L^2$-like norm
  $\nm{\cdot}_M$, specifically, 
	\begin{equation*}
	\frac{\mathrm{d}}{\mathrm{d}\hat{t}}\nm{\hat{u}_h}_M^2 \leq 0.
	\end{equation*}
\end{proposition}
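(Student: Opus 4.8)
The plan is to differentiate the weighted energy $\nm{\hat u_h}_M^2 = \langle M(\hat t)\hat u_h,\hat u_h\rangle$ in pseudotime and massage the result into $-\langle D\hat u_h,\hat u_h\rangle$, which is nonpositive by Proposition~\ref{prop-assp}(2). For brevity write $u=\hat u_h$ and recall $M(\hat t)=M_0-\hat t M_1$, so that $\tfrac{\mathrm d}{\mathrm d\hat t}M=-M_1$, a selfadjoint operator independent of $\hat t$.

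First I would use the product rule together with the selfadjointness of $M$ (Proposition~\ref{prop-assp}(1)) to write
\[
\frac{\mathrm d}{\mathrm d\hat t}\langle Mu,u\rangle = \Big\langle \big(\tfrac{\mathrm d}{\mathrm d\hat t}M\big)u,\,u\Big\rangle + 2\langle M u_{\hat t},\,u\rangle = -\langle M_1 u,u\rangle + 2\langle M u_{\hat t},u\rangle.
\]
The one place that needs a little care is the term $\langle M u_{\hat t},u\rangle$: rather than invert $M$ (which is permissible since $M$ is positive definite, but unnecessary), I would isolate $M u_{\hat t}$ directly from the governing ODE. Expanding $(Mu)_{\hat t}=\big(\tfrac{\mathrm d}{\mathrm d\hat t}M\big)u+Mu_{\hat t}=-M_1 u+Mu_{\hat t}$ and comparing with \eqref{eq-ODE}, namely $(Mu)_{\hat t}=Au$, yields the identity $Mu_{\hat t}=Au+M_1u$.

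Substituting this back and symmetrizing the advective term with the adjoint, $2\langle Au,u\rangle=\langle (A+A^\top)u,u\rangle$, I would obtain
\[
\frac{\mathrm d}{\mathrm d\hat t}\langle Mu,u\rangle = -\langle M_1u,u\rangle + 2\langle Au+M_1u,u\rangle = \langle (A+A^\top+M_1)u,u\rangle = -\langle Du,u\rangle \le 0,
\]
using $D=-(A^\top+A+M_1)\ge 0$ from \eqref{eq-D}. This is exactly the claim.

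There is no genuine obstacle here, since the entire analytic content is already packaged in the semidefiniteness of $D$. The only point to watch is the bookkeeping of the $\langle(\tfrac{\mathrm d}{\mathrm d\hat t}M)u,u\rangle=-\langle M_1u,u\rangle$ contribution coming from the pseudotime-dependence of the mass operator: it is precisely this term that combines with $A+A^\top$ to produce the operator $D$ of \eqref{eq-D}, and it is also what distinguishes the tent setting from the classical constant-mass energy estimate.
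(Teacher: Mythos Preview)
Your proof is correct and essentially identical to the paper's: both differentiate $\langle M\hat u_h,\hat u_h\rangle$ using the product rule, exploit the selfadjointness of $M$, insert the ODE $(M\hat u_h)_{\hat t}=A\hat u_h$, and collect terms into $\langle (A+A^\top+M_1)\hat u_h,\hat u_h\rangle=-\snm{\hat u_h}_D^2$. The only cosmetic difference is that the paper keeps $(M\hat u_h)_{\hat t}$ intact and subtracts off $M_{\hat t}\hat u_h$, whereas you first expand the product rule to isolate $M\hat u_{h,\hat t}$ and then reconstruct it from the ODE; the arithmetic is the same.
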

\begin{proof}
  \begin{equation*}
    \begin{split}
      \frac{\mathrm{d}}{\mathrm{d}\hat{t}} \ip{M\hat{u}_h,\hat{u}_h} 
      &= 	\ip{(M\hat{u}_h)_{\hat{t}},\hat{u}_h} + \ip{M\hat{u}_h,(\hat{u}_h)_{\hat{t}}}\\
      &=\ip{(M\hat{u}_h)_{\hat{t}},\hat{u}_h} + \ip{\hat{u}_h,M(\hat{u}_h)_{\hat{t}}}\\
      &=\ip{(M\hat{u}_h)_{\hat{t}},\hat{u}_h} + \ip{\hat{u}_h,(M\hat{u}_h)_{\hat{t}}} - \ip{\hat{u}_h,M_{\hat{t}} \hat{u}_h}\\
      &=\ip{A\hat{u}_h,\hat{u}_h} + \ip{\hat{u}_h,A\hat{u}_h} + \ip{\hat{u}_h,M_1\hat{u}_h}\\
      &=\ip{(A+A^\top+M_1)\hat{u}_h,\hat{u}_h}\\
      &= - \snm{\hat{u}_h}_D^2
    \end{split}\qquad 
    \begin{split}
      \\
      \\
      \text{since } M \text{ is selfadjoint},\\
      \\
      \text{by~\eqref{eq-ODE} and } M_{\hat{t}} = - M_1,\\
      \text{by definition of }A^\top,\\
      \text{by definition of }D \text{ and } \snm{\cdot}_{D},\\
    \end{split}
  \end{equation*}
  so the result follows from~\eqref{eq-D}.
\end{proof}

Finally, we turn to the full discretization by SAT timestepping.
The SAT approximation of  \eqref{eq-ODE} at $\hat{t} = \tau$ is given by $\hat{u}_h(\tau) \approx R_s \hat{u}_h(0)$, where
\begin{gather}
  \label{eq-sat}
  R_sv = S_1v + M^{-1}M_0 S_2v, \text{ with }
  \\
  \label{eq-S1-S2}
  S_1 v = \sum_{i = 0}^{s-1} {(i!)^{-1}}X_i v, \qquad
  S_2v = ({s!})^{-1}X_sv,
\end{gather}
and $X_i$ is defined recursively by 
\begin{equation}\label{eq-Xi}
X_0 = I \quand X_i = \tau M_0^{-1}(A+i M_1)X_{i-1} \quad \text{ for } i \geq 1. 
\end{equation}
To ensure stability, we usually need $\tau$ to be sufficiently
small. Therefore, for time marching on $\hat{T}$, we will need to
divide $\hat{T}$ into several ``subtents'' and use the propagation
operator~\eqref{eq-sat} on each subtent, as we shall see later in Section \ref{sec-subtents}.

\begin{remark}
  In the special case $M_1 = 0$, we have
  \begin{equation}\label{eq-noM1}
    R_sv = \sum_{i = 0}^{s}(i!)^{-1} \tau^i \tilde{A}^i v,
  \end{equation}
  where $\tilde{A} = M_0^{-1}A$ is a negative semidefinite operator under the inner
  product $\ip{\cdot,\cdot}_{M_0}$. Thus $R_s$ reduces to a high-order
  Runge--Kutta (Taylor) operator for linear problems that has been
  analyzed in \cite{sun2019strong}. It is no surprise therefore that
  our analysis in Section \ref{sec-sat} is substantially guided by the techniques
  in~\cite{sun2019strong}. 
  In addition, when $\tilde{A}$ in \eqref{eq-noM1} represents the DG operator for linear advection, the energy estimate of \eqref{eq-noM1} is essentially the stability estimate of the standard Runge--Kutta DG methods, which has been systematically studied by Xu et al. in~\cite{xu2019l2,xu2020superconvergence}. Some parts of our analyses are also inspired by their work, especially the improved estimate with low-order polynomials in Section \ref{sec-low}. 
\end{remark}

For analyzing $R_s$, we need bounds on the norms of the various
operators that go into building $R_s$. The following bounds are
gathered from~\cite[Lemmas 3.1 and~4.4]{DrakeGopalSchob21}.

\begin{proposition}\label{prop-opest-norm}
  There is a $C_1>0$ independent of $h$ such that 
\[
	\max \left(\nm{M_0},\nm{M_0^{-1}},\nm{M_1},\nm{M},\nm{M^{-1}},\nm{A}, \nm{D}\right)\leq C_1.
\]
\end{proposition}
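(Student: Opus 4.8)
The plan is to bound each operator appearing in the maximum, viewed as a map on $(V_h,\ip{\cdot,\cdot})$, by a constant depending only on the fixed data ($\mathcal{G}$, the matrices $\mathcal{L}^{(j)}$, the wave-speed bound $c_{\max}$, the stabilization matrix $S$) and on the shape-regularity constant of $\mathcal{T}$, but not on $h$. The three mass-type operators will be handled by elementary Cauchy--Schwarz estimates, the two inverses by a uniform coercivity bound, the derivative operator $A$ by combining inverse/trace inequalities with the smallness of $\delta$, and $D$ will then follow from $D=-(A^\top+A+M_1)$.

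For $M_0$ and $M_1$, I would read off from~\eqref{eq-M0M1A-M0}--\eqref{eq-M0M1A-M1} that $M_0v$ and $M_1v$ are represented by $L^2$-pairings of the test function against a bounded matrix multiplier applied to $v$: the multiplier is $\mathcal{G}-\sum_j(\partial_j\vphi_{\mathrm{bot}})\mathcal{L}^{(j)}$ for $M_0$ and $\sum_j(\partial_j\delta)\mathcal{L}^{(j)}$ for $M_1$. Both are bounded independently of $h$, since $\mathcal{G}$ and the $\mathcal{L}^{(j)}$ are bounded by hypothesis and the gradients of the piecewise-linear fronts $\vphi_{\mathrm{bot}},\vphi_{\mathrm{top}}$, hence of $\delta=\vphi_{\mathrm{top}}-\vphi_{\mathrm{bot}}$, are bounded by $2/c_{\max}$ through the causality condition~\eqref{eq-causa}; Cauchy--Schwarz then gives $\nm{M_0},\nm{M_1}\le C$, and $\nm{M}=\nm{M_0-\hat tM_1}\le\nm{M_0}+\nm{M_1}$ for $\hat t\in[0,1]$. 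For the inverses I would test the ($\hat t$-dependent) defining identity against $v$ itself: because $\vphi(\cdot,\hat t)=(1-\hat t)\vphi_{\mathrm{bot}}+\hat t\vphi_{\mathrm{top}}$ and $x\mapsto\nm{x}_2$ is convex, the causality bound passes to $\vphi(\cdot,\hat t)$ for every $\hat t\in[0,1]$, and together with the uniform positive definiteness of $\mathcal{G}$ this produces a constant $\gamma_0>0$, independent of $h$ and $\hat t$, with $\ip{M(\hat t)v,v}\ge\gamma_0\nm{v}^2$ for all $v\in V_h$ --- the quantitative form of Proposition~\ref{prop-assp}(1). Then, for $w\in V_h$ and $v=M(\hat t)^{-1}w$, $\gamma_0\nm{v}^2\le\ip{w,v}\le\nm{w}\,\nm{v}$, so $\nm{M(\hat t)^{-1}}\le\gamma_0^{-1}$; the case $\hat t=0$ covers $M_0^{-1}$.

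The main work is the estimate for $A$. From~\eqref{eq-M0M1A-A}, $\ip{Av,w}=\ipm{\delta f(v),\grad_x w}+\ip{\delta\hat F^\nu,\lb w\rb}_{\Face^\vv}$ with $\hat F^\nu=\mathcal D\{v\}-S\lb v\rb$, and all integrands involve only bounded matrices. Estimating element by element and facet by facet, the only dangerous factors are $\grad_x w$, which costs $h_K^{-1}$ by the inverse inequality $\nm{\grad_x w}_{L^2(K)}\le Ch_K^{-1}\nm{w}_{L^2(K)}$, and the facet traces, which cost $h_K^{-1/2}$ by the discrete trace inequality $\nm{w}_{L^2(\partial K)}\le Ch_K^{-1/2}\nm{w}_{L^2(K)}$ (both valid on $V_h$ since $p$ is fixed and $\mathcal{T}$ is shape regular). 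These are compensated by $\delta$: in a vertex patch every simplex $K$ has $\vv$ as a vertex, and since the advancing front is moved only at $\vv$, the piecewise-linear $\delta$ takes the value $\delta(\vv)$ at $\vv$ and vanishes at the other $d$ vertices of $K$, so $\nm{\grad_x\delta}_{L^2(K)}$ equals $\delta(\vv)$ divided by the distance from $\vv$ to the opposite facet; shape-regularity bounds that distance below by $ch_K$ and causality bounds $\nm{\grad_x\delta}_2$ above by $2/c_{\max}$, whence $\nm{\delta}_{L^\infty(K)}\le\delta(\vv)\le Ch_K$. Substituting $\nm{\delta}_{L^\infty(K)}\le Ch_K$ into the volume terms cancels $h_K^{-1}$, and for a facet $F$ shared by elements $K^{\pm}$ one uses $\nm{\delta}_{L^\infty(F)}\le C\min(h_{K^+},h_{K^-})\le C(h_{K^+}h_{K^-})^{1/2}$ to cancel the two factors $h_{K^{\pm}}^{-1/2}$. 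Summing, using Cauchy--Schwarz and the bounded valences of a shape-regular patch, gives $|\ip{Av,w}|\le C\nm{v}\,\nm{w}$, so $\nm{A}\le C$, and finally $\nm{D}\le 2\nm{A}+\nm{M_1}\le C$.

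I expect the element-local bookkeeping in the estimate for $A$ to be the only real obstacle: the powers of the \emph{local} mesh size $h_K$ introduced by the inverse and trace inequalities must be matched against the \emph{local} tent-height factor $\delta|_K$, and it is exactly here that shape-regularity and the causality bound on $\grad_x\delta$ are used --- the global bound $\delta\le Ch$ alone would not close the argument on a non-quasi-uniform mesh. The remaining estimates are routine applications of Cauchy--Schwarz and coercivity, drawing only on the boundedness and positive-definiteness already assumed of the data together with~\eqref{eq-causa}, which is why the conclusion can equally be quoted from \cite[Lemmas~3.1 and~4.4]{DrakeGopalSchob21}.
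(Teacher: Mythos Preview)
The paper does not actually prove this proposition; it simply records it as a quotation from \cite[Lemmas~3.1 and~4.4]{DrakeGopalSchob21}, as you yourself note at the end of your write-up. Your sketch supplies the direct argument that the paper omits, and the strategy---pointwise boundedness of the multipliers for $M_0,M_1,M$, uniform coercivity from causality for the inverses, and the local cancellation $\|\delta\|_{L^\infty(K)}\le Ch_K$ against the $h_K^{-1}$ and $h_K^{-1/2}$ factors from inverse and trace inequalities for $A$---is the standard one and is correct.

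Two small clean-ups. First, in deriving $\delta(\vv)\le Ch_K$ you write that shape-regularity bounds the distance from $\vv$ to the opposite facet \emph{below} by $ch_K$; what you actually use is the trivial \emph{upper} bound $d_\vv\le h_K$ together with $\|\grad_x\delta\|_2\le 2/c_{\max}$, since $\delta(\vv)=d_\vv\,\|\grad_x\delta\|_2$ (shape-regularity is still needed, but for the uniformity of the inverse and trace constants, not here). Second, ``$\nm{\grad_x\delta}_{L^2(K)}$'' should be the pointwise $\ell^2$ norm of the constant gradient on $K$, not the $L^2(K)$ norm. Neither affects the validity of the argument.
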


\begin{proposition}\label{prop-xivest-norm-rmd}
  There is an $h$-independent constant $C_2>0$ such that for
  any $j\geq i$,
  \begin{align}
    \nm{X_j v}_{M_0}& \leq C_2 \tau^{j-i}\nm{X_iv}_{M_0}, \label{eq-xivest-norm}\\
    \snm{X_j v}_{\tau D}& \leq C_2 \tau^{j-i + \hf}\nm{X_iv}_{M_0},\label{eq-xivest-snm}
    \\ \label{eq-S2est}
    \tau \ip{M_0S_2v,M^{-1}M_1S_2v} & \leq C_2 \tau^{2s+1}\nm{v}_{M_0}^2. 
  \end{align}
\end{proposition}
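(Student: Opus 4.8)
The plan is to treat \eqref{eq-xivest-norm} as the essential estimate and then obtain \eqref{eq-xivest-snm} and \eqref{eq-S2est} as short consequences of it, Cauchy--Schwarz, and the uniform operator bounds of Proposition~\ref{prop-opest-norm}. A preliminary fact I would use repeatedly is that $\nm{\cdot}$ and $\nm{\cdot}_{M_0}$ are equivalent on $V_h$: since $M_0$ is selfadjoint and positive definite with $\max(\nm{M_0},\nm{M_0^{-1}})\le C_1$, one has $C_1^{-1/2}\nm{w}\le\nm{w}_{M_0}\le C_1^{1/2}\nm{w}$. Every constant below will depend only on $C_1$ and on the fixed number of stages $s$, hence be independent of $h$ and of $\tau$; I would absorb them all into a single $C_2$ at the end.

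For \eqref{eq-xivest-norm} I would establish a one-step bound and compose it. Fixing $j\ge1$, the recursion \eqref{eq-Xi} gives $M_0X_jv=\tau(A+jM_1)X_{j-1}v$, so
\begin{equation*}
\nm{X_jv}_{M_0}^2=\ip{M_0X_jv,X_jv}=\tau\,\ip{(A+jM_1)X_{j-1}v,\,X_jv}.
\end{equation*}
Applying Cauchy--Schwarz, then $\nm{A},\nm{M_1}\le C_1$, then the norm equivalence to both factors, and finally cancelling one power of $\nm{X_jv}_{M_0}$ (the bound being trivial when $X_jv=0$), I expect to reach $\nm{X_jv}_{M_0}\le(1+j)C_1^2\,\tau\,\nm{X_{j-1}v}_{M_0}$. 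Iterating from $i+1$ up to $j$ then yields $\nm{X_jv}_{M_0}\le\big(\prod_{l=i+1}^{j}(1+l)C_1^2\big)\,\tau^{\,j-i}\nm{X_iv}_{M_0}$; since the product has at most $s$ factors, each at most $(1+s)C_1^2$, it is bounded by the $h$-independent constant $((1+s)C_1^2)^{s}$, which is \eqref{eq-xivest-norm}.

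Given \eqref{eq-xivest-norm}, the remaining two estimates should be immediate. For \eqref{eq-xivest-snm} I would write $\snm{X_jv}_{\tau D}^2=\tau\,\ip{DX_jv,X_jv}\le\tau\,\nm{D}\,\nm{X_jv}^2$, use $\nm{D}\le C_1$ and the norm equivalence to pass to $\tau\,C_1^2\,\nm{X_jv}_{M_0}^2$, insert \eqref{eq-xivest-norm}, and take square roots; the extra $\tau$ outside supplies the $\tau^{1/2}$ in the exponent. For \eqref{eq-S2est} I would use $S_2v=(s!)^{-1}X_sv$, apply Cauchy--Schwarz to $\ip{M_0S_2v,M^{-1}M_1S_2v}$, bound it by $(s!)^{-2}\nm{M_0}\,\nm{M^{-1}}\,\nm{M_1}\,\nm{X_sv}^2\le(s!)^{-2}C_1^3\,\nm{X_sv}^2$, and then invoke \eqref{eq-xivest-norm} with $i=0$, $X_0v=v$, $j=s$; multiplying through by $\tau$ produces the $\tau^{2s+1}$, and enlarging $C_2$ once more completes the proof.

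I do not anticipate a genuine obstacle here: these are coarse bounds feeding into the later stability argument, and the exponents of $\tau$ claimed in the statement are exactly what the direct argument gives. The only points needing care are bookkeeping ones --- tracing every constant back to $C_1$ and $s$ alone (which is why it matters that each iterated product above has at most $s$ factors, so that no $h$-dependent blow-up can creep in), handling the degenerate cancellation case $X_jv=0$, and using the two-sided equivalence of $\nm{\cdot}$ and $\nm{\cdot}_{M_0}$ consistently throughout.
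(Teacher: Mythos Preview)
Your proposal is correct and is precisely the argument the paper has in mind: the paper's entire proof is the single sentence ``This follows immediately from the recursive definition of $X_i$ in \eqref{eq-Xi},'' and you have simply supplied the routine details---the one-step bound from the recursion plus Proposition~\ref{prop-opest-norm}, iterated and combined with norm equivalence---that this sentence compresses. Your remark that the iterated product has at most $s$ factors (so $C_2$ depends on $s$ but not on $h$) is the only point worth making explicit, and you do.
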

\begin{proof}
  This follows immediately from the recursive definition of $X_i$ in \eqref{eq-Xi}.
\end{proof}

This completes our review of the tent-based discretization whose
stability we now proceed to analyze.

\section{Stability analysis}\label{sec-sat}

The goal of our energy-type stability analysis is to show that
$\nm{R_sv}_M$ is appropriately bounded by $\nm{v}_{M_0}$. We first obtain a bound on $\nm{R_sv}_M$ in terms of $\nm{v}_{M_0}$ and the pseudotime $\tau$ in Theorem~\ref{thm-sat1}. This then leads to the identification of a CFL condition and the main stability result of this section, Theorem~\ref{thm:summary}.

Before proceeding, let us remark an important consequence of the stability estimates.  
As shown in~\cite{DrakeGopalSchob21}---see also Remark~\ref{rmk-errorest} below---if we define the ``energy'' at the tent's
top and bottom as $\nm{R_sv}_M$ and $\nm{v}_{M_0}$, respectively, then
one can combine such  stability bounds with local truncation
error estimates to obtain bounds for the global error at the final
time, even on unstructured meshes.

\subsection{Key ideas of the analysis}

Our proof of the above-mentioned Theorem~\ref{thm-sat1} requires a
number of quite technical steps. To ease entry into these
technicalities, we identify and motivate the key ideas as lemmas here,
whose proofs are postponed to Section~\ref{sec:proofs-sat}. Using the
lemmas, we can prove Theorem~\ref{thm-sat1} at the end of
this subsection.

To consider how we may bound $\nm{R_sv}_M$ by $\nm{v}_{M_0}$ for any
$v \in V_h$, we begin by squaring both sides of \eqref{eq-sat}. Since
$M$ and $M_0$ are selfadjoint, obvious manipulations yield
\begin{align*}
\nm{R_sv}_M^2 =& \ip{MR_sv,R_sv} \\
=&\ip{MS_1v+M_0S_2v,S_1v+M^{-1}M_0S_2v}\\
=& \nm{S_1v}_M^2 + 2\ip{S_1v,S_2v}_{M_0} + \ip{M_0S_2v,M^{-1}M_0S_2v}.
\end{align*}
Since $\nm{S_1v}_M^2 = \nm{S_1v}_{M_0}^2 - \ip{S_1v,S_1v}_{\tau M_1}$, 
\begin{align}
  \nonumber
  \nm{R_sv}_M^2
  & =  \nm{S_1v}_{M_0}^2 + 2\ip{S_1v,S_2v}_{M_0} + \nm{S_2v}_{M_0}^2
  \\ \nonumber
  & \quad + \ip{M_0S_2v,(M^{-1}M_0-I)S_2v} - \ip{S_1v,S_1v}_{\tau M_1}\\
  \nonumber
  &= \nm{S_1v+S_2v}_{M_0}^2+ \ip{M_0S_2v,(M^{-1}(M_0-M))S_2v} - \ip{S_1v,S_1v}_{\tau M_1}\\ \label{eq-expRs}
  &= \Big\|\sum_{i = 0}^s (i!)^{-1}X_iv\Big\|_{M_0}^2 - \ip{S_1v,S_1v}_{\tau M_1}
    + \tau \ip{M_0S_2v,M^{-1}M_1S_2v}.
\end{align}
For ease of notation, we introduce 
\[
  F_{ij} = (X_i v, X_j v)_{\tau M_1},\quad
  G_{ij} = (X_i v, X_j v)_{M_0},\quad \text{and}  \quad 
  H_{ij} = (X_i v, X_j v)_{\tau D}.\]
These terms can be thought of as entries of symmetric matrices $F, G,$ and $H$. Moreover, the diagonal entries of $G$ and $H$ are non-negative. Using $F$ and $G$, we rewrite \eqref{eq-expRs} as
\begin{equation}
  \label{eq:1}
  \nm{R_sv}_M^2
  =
  \sum_{i,j=0}^s   \frac{G_{ij}}{{i!j!}}
  -
  \sum_{i,j=0}^{s-1}   \frac{F_{ij}}{{i!j!}}
  + \tau \ip{M_0S_2v,M^{-1}M_1S_2v}.
\end{equation}
From \eqref{eq-S2est}, it is clear that the last term is a
high-order term in $\tau$. The remaining lower-order terms above must
be carefully sorted out to obtain a stability estimate.

To this end, a critical observation is that the off-diagonal entries
$G_{ij}$ for $j>i$ can be expressed in terms of closer-to-diagonal
entries of $G, F,$ and $H$, as stated next.

\begin{lemma}
  \label{lem-ibp}
  We have
  \begin{subequations}    
    \begin{align}
      \label{eq:Gi,j+1}
    G_{ij}
    & = -\hf H_{ii} + \Big(i + \hf\Big)F_{ii},
    && \text{ if } j = i+1,
    \\ \label{eq:Gi,j+more}
    G_{ij}
    & =
      -G_{i+1, j-1} - H_{i, j-1} + (i+j) F_{i, j-1},
    && \text{ if } j >i+1.
    \end{align}
  \end{subequations}
\end{lemma}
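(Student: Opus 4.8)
The plan is to derive these identities by exploiting the recursive definition of $X_i$ in~\eqref{eq-Xi}, together with the definition $D = -(A^\top + A + M_1)$ from~\eqref{eq-D}. The starting point is to express $G_{ij} = (X_iv, X_jv)_{M_0} = \ip{M_0X_iv, X_jv}$ and, when $j \geq i+1$, rewrite $X_j = \tau M_0^{-1}(A + jM_1)X_{j-1}$, so that $M_0 X_j v = \tau(A + jM_1)X_{j-1}v$. Hence
\[
  G_{ij} = \tau\ip{(A + jM_1)X_{j-1}v, X_iv} = \tau\ip{X_{j-1}v, (A^\top + jM_1)X_iv},
\]
using that $M_1$ is selfadjoint. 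Now I would substitute $A^\top = -A - M_1 - D$ to obtain
\[
  G_{ij} = \tau\ip{X_{j-1}v, (-A - M_1 + jM_1 - D)X_iv}
         = -\tau\ip{X_{j-1}v, AX_iv} + (j-1)\tau\ip{X_{j-1}v, M_1X_iv} - \tau\ip{X_{j-1}v, DX_iv}.
\]
The middle term is $(j-1)F_{i,j-1}$ and the last term is $-H_{i,j-1}$, so everything hinges on rewriting $-\tau\ip{X_{j-1}v, AX_iv}$.

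The key trick for that leftover term is to move the operator $A$ onto $X_i$ via the recursion in the other direction: when $i \geq 1$, $\tau A X_{i-1}v = M_0 X_i v - i\tau M_1 X_{i-1}v$, i.e. $\tau(A X_{i-1}v) = M_0 X_i v - i\tau M_1 X_{i-1} v$. Rather, to handle $\tau\ip{X_{j-1}v, AX_iv}$ I would instead use the recursion on the $X_{j-1}$ slot one more time (when $j-1 \geq i+1$, i.e. $j > i+1$): write $M_0 X_{j-1}v = \tau(A + (j-1)M_1)X_{j-2}v$ so that $\tau\ip{X_{j-1}v, AX_iv} = \ip{X_{j-1}v, \tau A X_i v}$ and push $\tau A X_i v = M_0 X_{i+1}v - (i+1)\tau M_1 X_i v$ (valid since $i+1 \geq 1$). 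This gives
\[
  \tau\ip{X_{j-1}v, AX_iv} = \ip{X_{j-1}v, M_0 X_{i+1}v} - (i+1)\tau\ip{X_{j-1}v, M_1 X_i v} = G_{i+1,j-1} - (i+1)F_{i,j-1}.
\]
Substituting back yields $G_{ij} = -G_{i+1,j-1} + (i+1)F_{i,j-1} + (j-1)F_{i,j-1} - H_{i,j-1} = -G_{i+1,j-1} - H_{i,j-1} + (i+j)F_{i,j-1}$, which is~\eqref{eq:Gi,j+more}.

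For the base case $j = i+1$, the same computation gives $G_{i,i+1} = -\tau\ip{X_iv, AX_iv} + iF_{ii} - H_{ii}$ (note $G_{i+1,i-1}$ would appear only for $j>i+1$; here we directly have the $A$ term with both slots equal to $X_i$). Now $-\tau\ip{X_iv, AX_iv} = -\hf\tau\ip{X_iv,(A+A^\top)X_iv} = -\hf\tau\ip{X_iv,(-M_1 - D)X_iv} = \hf F_{ii} + \hf H_{ii}$, so $G_{i,i+1} = \hf F_{ii} + \hf H_{ii} + iF_{ii} - H_{ii} = -\hf H_{ii} + (i+\hf)F_{ii}$, which is~\eqref{eq:Gi,j+1}. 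I expect the main obstacle to be purely bookkeeping: making sure the recursion is applied on the correct slot and only when the index is large enough for $X$ to be defined recursively (so $i \geq 0$ forces using $i+1 \geq 1$ on the $A$-shift, and $j > i+1$ is exactly what makes $j-1 \geq i+1 \geq 1$ so the $X_{j-1}$ recursion and the resulting $G_{i+1,j-1}$ are legitimate), and tracking the coefficients $i$, $j-1$, $i+1$ correctly through the two applications of~\eqref{eq-Xi}. There is no deep analytic difficulty here — it is a discrete integration-by-parts identity whose content is entirely in the algebra of the recursion and the splitting $A^\top = -A - M_1 - D$.
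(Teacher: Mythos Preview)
Your proof is correct and takes essentially the same approach as the paper: both expand $M_0 X_j v = \tau(A+jM_1)X_{j-1}v$, use the relation $A^\top = -A - M_1 - D$ (equivalently, the paper's one-line rewriting $A+jM_1 = -(A+(i+1)M_1)^\top + (A+A^\top+M_1) + (i+j)M_1$), and then absorb $\tau(A+(i+1)M_1)X_i v$ into $M_0 X_{i+1}v$. The only difference is cosmetic---you split the manipulation into two substitutions while the paper performs it in a single algebraic rearrangement.
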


We give a short proof in Section~\ref{sec:proofs-sat}, which is
simple, but obscures the origins of such identities. It is
illustrative to draw an analogy with
the (non-tent) case of $M_1=0$ and $X_i = (\tau M^{-1}A)^i$ is
an approximation of $(\tau \partial_x)^i$, which corresponds to the
special case when \eqref{eq-slhs} represents one-dimensional
transport. Then
$G_{ij} = \ip{X_iv,X_jv}_{M_0} \approx \tau^{i+j}\ip{\partial_x^i
  v,\partial_x^j v}$ can be manipulated by integration by parts to
obtain identities like that of the lemma. We may therefore view the
identities of Lemma~\ref{lem-ibp} as having originated in some
discrete analog of integration by parts.

One can apply Lemma~\ref{lem-ibp} recursively to simplify the first
sum of~\eqref{eq:1}. Indeed, an even more general sum can
be rearranged as stated in the next lemma.

\begin{lemma}
  \label{lem-exps}
  For any numbers $\alpha_{ij}$ with $\alpha_{ij} = \alpha_{ji}$,  the identity 
  \begin{equation}\label{eq-exp}
    \sum_{i,j=0}^s \alpha_{ij} G_{ij}
    = \sum_{i = 0}^s \beta_i G_{ii}
    + \sum_{i,j=0}^{s-1}\gamma_{ij}H_{ij}
    + \sum_{i,j=0}^{s-1}\delta_{ij}F_{ij}
  \end{equation}
  holds with
  \begin{subequations}
    \label{eq:beta-gamma-delta}
    \begin{align}
      \beta_i &= \sum_{\qq = \max\{0,2i-s\}}^{\min\{2i,s\}}\alpha_{\qq,2i-\qq}(-1)^{i-\qq},\label{eq-abeta}
      \\
      \label{eq-agamma}
      \gamma_{ij} &= \sum_{\qq=\max\{0,i+j+1-s\}}^{\min\{i,j\}} (-1)^{\min\{i,j\}+1-\qq}\alpha_{\qq,i+j+1-\qq},\\ 
      \delta_{ij} &= \sum_{\qq=\max\{0,i+j+1-s\}}^{\min\{i,j\}} (-1)^{\min\{i,j\}-\qq}\alpha_{\qq,i+j+1-\qq}(i+j+1). \label{eq-adelta}
  \end{align}
\end{subequations}
\end{lemma}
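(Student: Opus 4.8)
The plan is to prove Lemma~\ref{lem-exps} by reducing the quadratic form $\sum_{i,j=0}^s \alpha_{ij}G_{ij}$ to diagonal $G$ terms plus $H$ and $F$ terms, using Lemma~\ref{lem-ibp} as the single ``rewrite rule'' applied systematically. First I would reorganize the double sum over $\{0,\dots,s\}^2$ by the anti-diagonal index $n = i+j$, writing $\sum_{i,j} \alpha_{ij}G_{ij} = \sum_{n=0}^{2s} \sum_{i+j=n} \alpha_{ij}G_{ij}$. For a fixed $n$, the pairs $(i,j)$ with $i+j=n$ come in symmetric pairs $(i,j)$ and $(j,i)$ (contributing $2\alpha_{ij}G_{ij}$ when $i\neq j$), plus a possible diagonal term when $n=2i$ is even. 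When $n$ is even, say $n=2m$, the genuinely diagonal term $\alpha_{mm}G_{mm}$ is already of the desired form and goes straight into the $\beta$-sum; the off-diagonal pairs on that anti-diagonal still need rewriting.

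The heart of the argument is then a ``telescoping'' computation on a single anti-diagonal. Fix $n$ and consider the sum $\Sigma_n := \sum_{i+j=n,\, i<j} 2\alpha_{ij}G_{ij}$ (taking WLOG $i<j$ via symmetry of $\alpha$ and $G$). I would apply~\eqref{eq:Gi,j+more} to each $G_{ij}$ with $j>i+1$ to replace it by $-G_{i+1,j-1} - H_{i,j-1} + (i+j)F_{i,j-1}$; note $G_{i+1,j-1}$ lies on the same anti-diagonal but is ``one step closer'' to the center, and the $H, F$ terms produced are indexed by $i$ and $j-1 = n-1-i$, i.e.\ they sit on anti-diagonal $n-1$, precisely matching the index ranges in~\eqref{eq-agamma}--\eqref{eq-adelta}. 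Iterating this rewrite collapses all the $G_{ij}$ on anti-diagonal $n$ down to either (a) the closest-to-center off-diagonal entry $G_{p,p+1}$ when $n = 2p+1$ is odd, which is then eliminated via~\eqref{eq:Gi,j+1}, producing $-\tfrac12 H_{pp} + (p+\tfrac12)F_{pp}$; or (b) the central diagonal entry $G_{mm}$ when $n=2m$ is even, where the iterated cancellations of $-G_{i+1,j-1}$ terms combine in an alternating-sign telescope to leave exactly $\sum_{q} (-1)^{m-q}\alpha_{q,2m-q}\cdot(\text{coefficient on }G_{mm})$ together with leftover $H$ and $F$ contributions on anti-diagonal $n-1$. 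Tracking the signs through this telescope is what produces the $(-1)^{i-q}$, $(-1)^{\min\{i,j\}+1-q}$, and $(-1)^{\min\{i,j\}-q}$ factors, and the factor $(i+j+1)$ in $\delta_{ij}$ comes directly from the $(i+j)$ coefficient in~\eqref{eq:Gi,j+more} evaluated at the relevant indices (shifted since the $H,F$ pair of ``anti-diagonal index'' $i+j$ originates from a $G$ on anti-diagonal $i+j+1$).

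Concretely, I would organize the bookkeeping as follows: collect, for each target index pair $(i,j)$ with $i,j \le s-1$ sitting on anti-diagonal $i+j+1 \le 2s-1$, the total coefficient of $H_{ij}$ and of $F_{ij}$ accumulated from rewriting every $G_{q,n-q}$ on anti-diagonal $n=i+j+1$. Each such $G$, when reduced toward the center, passes through the pair $(i,j)$ (in some order) exactly once and deposits a contribution $-H_{ij}$ (resp.\ $(i+j+1)F_{ij}$) multiplied by the accumulated alternating sign $(-1)^{(\text{number of rewrite steps from }(q,n-q)\text{ to }(i,j))}$; counting those steps gives the exponent $\min\{i,j\} - q$ (up to the $+1$ shift distinguishing the $G$-elimination step~\eqref{eq:Gi,j+1} from the $G$-propagation step~\eqref{eq:Gi,j+more}, which accounts for the extra $+1$ in the exponent of $\gamma_{ij}$ versus $\delta_{ij}$). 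The constraint $q \ge \max\{0, i+j+1-s\}$ is just the requirement that $n-q = i+j+1-q \le s$, i.e.\ that $G_{q,n-q}$ is a legitimate entry with both indices in $\{0,\dots,s\}$, and $q \le \min\{i,j\}$ records that only entries ``outside'' the pair $(i,j)$ contribute to it.

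I expect the main obstacle to be the sign and multiplicity bookkeeping in the telescope — specifically, verifying that after all cancellations the coefficient of each surviving $G_{ii}$ is exactly $\beta_i$ as in~\eqref{eq-abeta} and that no $H$ or $F$ terms with index $\ge s$ survive. A clean way to handle this, which I would adopt if the direct induction gets unwieldy, is to prove the identity first for the ``atomic'' choice $\alpha_{ij} = \delta_{i,k}\delta_{j,\ell} + \delta_{i,\ell}\delta_{j,k}$ (a single symmetric pair), establishing it by induction on $\ell - k$ with base case $\ell - k \le 1$ handled by Lemma~\ref{lem-ibp} directly, and then invoke bilinearity of both sides of~\eqref{eq-exp} in the $\alpha_{ij}$ to conclude the general case. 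Under this reduction the induction step uses~\eqref{eq:Gi,j+more} once to pass from the pair $(k,\ell)$ to the pair $(k+1,\ell-1)$, and matching the resulting coefficient formulas~\eqref{eq:beta-gamma-delta} against the single-term recursion is then a finite, mechanical verification rather than a delicate global cancellation argument.
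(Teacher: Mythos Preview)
Your proposal is correct and follows essentially the same approach as the paper. The paper's organization mirrors your Option~B almost exactly: it first isolates the result of the telescope on a single $G_{ij}$ as a standalone identity (an explicit formula expressing $G_{ij}$ in terms of $G_{\frac{i+j}{2},\frac{i+j}{2}}$, the $H_{i+k,j-k-1}$, and the $F_{i+k,j-k-1}$, proved by induction on $j-i$ using Lemma~\ref{lem-ibp}), then substitutes this into $\sum_{i,j}\alpha_{ij}G_{ij}$ and carries out two explicit change-of-summation-variable lemmas to arrive at~\eqref{eq:beta-gamma-delta}; this is precisely your ``atomic $\alpha$ plus bilinearity'' route with the index-matching done via variable changes rather than an inductive verification.
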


When applying Lemma~\ref{lem-exps} to treat the first sum
of~\eqref{eq:1}, the case of interest is $\alpha_{ij} =
(i!j!)^{-1}$. In this case, by a few combinatorial identities, we
obtain the following explicit expressions for some of the coefficients introduced in Lemma~\ref{lem-exps}.

\begin{lemma}\label{lem-bgd}
	When $\alpha_{ij} = (i!j!)^{-1}$, 
	\begin{align}
          &\beta_0 = 1 \quand \beta_i = 0, 
          && \text{ for }
             1 \leq i \leq s/2,\label{eq-ijbeta}
          \\
          &\gamma_{ij} = - (i!j!(i+j+1))^{-1},
          &&\text{ for } i+j\leq {s-1},\label{eq-ijgamma}
          \\
          &\delta_{ij} = (i!j!)^{-1}, \quad
          &&\text{ for } i+j \leq s-1. \label{eq-ijdelta}
	\end{align}
\end{lemma}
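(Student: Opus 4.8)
The plan is to reduce all three identities to elementary binomial-coefficient sums by substituting $\alpha_{\qq,m-\qq} = (\qq!\,(m-\qq)!)^{-1} = (m!)^{-1}\binom{m}{\qq}$ into the formulas \eqref{eq-abeta}--\eqref{eq-adelta}. The one combinatorial fact I would isolate first, as a short sublemma proved by induction on $k$ using Pascal's rule, is the partial alternating binomial sum
\[
  \sum_{\qq=0}^{k}(-1)^{\qq}\binom{m}{\qq} = (-1)^{k}\binom{m-1}{k},
  \qquad 0 \le k \le m .
\]
Everything else is bookkeeping of the summation limits, which is where the hypotheses on the ranges of $i$ and $i+j$ enter.

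For \eqref{eq-ijbeta}: when $1 \le i \le s/2$ one has $2i \le s$, so the range in \eqref{eq-abeta} is exactly $\qq = 0, \dots, 2i$, and using $(-1)^{i-\qq} = (-1)^{i}(-1)^{\qq}$ together with $\alpha_{\qq,2i-\qq} = (2i)!^{-1}\binom{2i}{\qq}$ gives $\beta_i = (-1)^{i}(2i)!^{-1}(1-1)^{2i}$ by the binomial theorem. This vanishes for $i \ge 1$; the identical computation at $i=0$ has the single term $\alpha_{00} = 1$, so $\beta_0 = 1$. The only subtlety here is precisely this distinction between the empty-power value at $i=0$ and $0^{2i}=0$ for $i \ge 1$.

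For \eqref{eq-ijgamma} and \eqref{eq-ijdelta}: both $\gamma_{ij}$ and $\delta_{ij}$ are symmetric in $i,j$, so I would assume $i \le j$, whence $\min\{i,j\} = i$; and the hypothesis $i+j \le s-1$ makes the lower limit $\max\{0, i+j+1-s\}$ equal $0$. Writing $m = i+j+1$ and substituting $\alpha_{\qq,m-\qq} = (m!)^{-1}\binom{m}{\qq}$, each of $\gamma_{ij}$ and $\delta_{ij}$ becomes a constant multiple of $\sum_{\qq=0}^{i}(-1)^{\qq}\binom{m}{\qq}$ (the only sign bookkeeping is $(-1)^{\min\{i,j\}+1-\qq}$ for $\gamma$ versus $(-1)^{\min\{i,j\}-\qq}$ for $\delta$, plus the extra factor $i+j+1 = m$ for $\delta$). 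The sublemma evaluates that sum to $(-1)^{i}\binom{m-1}{i}$; then $\binom{m-1}{i}/m! = (m\,i!\,(m-1-i)!)^{-1}$ and the identities $m-1-i = j$, $m = i+j+1$ yield $\gamma_{ij} = -(i!\,j!\,(i+j+1))^{-1}$ and $\delta_{ij} = (i!\,j!)^{-1}$.

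I do not anticipate a genuine obstacle: the argument is routine once the partial alternating-sum identity and the trivialization of the $\max/\min$ limits are in place. The points needing care are merely (i) not overclaiming—$\beta_i$ need not vanish for $i > s/2$, where the range in \eqref{eq-abeta} no longer starts at $0$—and (ii) keeping the signs straight when passing from $\gamma$ to $\delta$.
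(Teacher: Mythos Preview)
Your proposal is correct and essentially identical to the paper's own proof: the paper likewise isolates the partial alternating binomial sum (its equation~\eqref{eq-ci2}, which is your sublemma up to the harmless sign rewrite $(-1)^{i-\qq}=(-1)^{i}(-1)^{\qq}$) and proves it by induction via Pascal's rule, handles $\beta_i$ by the binomial expansion $(1-1)^{2i}=0$, and obtains $\delta_{ij}$ from $\gamma_{ij}$ through the relation $\delta_{ij}=-(i+j+1)\gamma_{ij}$. The only cosmetic difference is that the paper packages the $\beta$ identity as a separate combinatorial lemma rather than invoking the binomial theorem inline.
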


To motivate the next result, first
substitute~\eqref{eq-exp} into~\eqref{eq:1} to get 
\begin{equation}\label{eq-rsv-norm}
  \nm{R_sv}_{M}^2 =\sum_{i = 0}^s \beta_i G_{ii}
  + \sum_{i,j=0}^{s-1}\gamma_{ij} H_{ij}
  + \sum_{i,j=0}^{s-1}\tilde{\delta}_{ij}F_{ij}
  + \tau \ip{M_0S_2v,M^{-1}M_1S_2v},
\end{equation}
where $\tilde{\delta}_{ij} = \delta_{ij} - (i!j!)^{-1}$.  A number
of terms in the first and last sums are zero by virtue
of~\eqref{eq-ijbeta} and~\eqref{eq-ijdelta}, respectively. One might
also anticipate from~\eqref{eq-ijgamma} that a partial sum of the
second sum in~\eqref{eq-rsv-norm}  is negative. Keeping these considerations in
view, we introduce the following definition of critical indices to 
ease the bookkeeping.

\begin{definition}\label{def:crit_ind}
  \label{defn-indices}
  Let
  \begin{enumerate}
  \item $\zeta \le s$ be
    the positive integer such that $\beta_{\zeta} \neq 0$ and $\beta_i = 0$ for all $1\leq i <\zeta$,
  \item $\rho\le s$ be the largest integer such that the
    $\rho \times \rho$ principal submatrix
    $\Gamma_{\rho} = (\gamma_{ij})_{0\leq i,j\leq \rho -1}$
    is negative definite,
    
  \item $\sigma \le 2 s$ be the largest integer such that
    $\tilde{\delta}_{ij} = 0$ for all $i + j \leq
    \sigma$,  and 
  \item $\kappa = \min(2\zeta,2\rho+1,\sigma+2)$. 
  \end{enumerate}
\end{definition}  

Explicit expressions or estimates are obtained for
$\zeta, \rho, \sigma$, and $\kappa$ in the case
$\alpha_{ij} = (i!j!)^{-1}$ in the next result.  We also list the
numerical values of $\zeta$, $\rho$, $\sigma$ and $\kappa$ for
$1\leq s\leq 20$ in Table~\ref{tab-zrs}.

\begin{lemma}\label{lem-zrsk}
	When $\alpha_{ij} = (i!j!)^{-1},$
	\begin{equation}
          \label{eq-zrsk}
          \zeta = \lfloor s/2\rfloor + 1, \quad \rho \geq \lfloor(s+1)/2\rfloor, \quad \sigma = s - 1, \quand \kappa = s+1. 
	\end{equation}
\end{lemma}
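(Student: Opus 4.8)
The plan is to establish each of the four claimed formulas in \eqref{eq-zrsk} by combining Lemma~\ref{lem-bgd} with direct combinatorial computation, handling $\zeta$, $\sigma$, and $\kappa$ essentially by bookkeeping and reserving the real work for $\rho$.

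First I would dispose of $\zeta$. By Lemma~\ref{lem-bgd}, $\beta_0=1$ and $\beta_i=0$ for $1\le i\le s/2$, i.e.\ for $1\le i \le \lfloor s/2\rfloor$. So to conclude $\zeta = \lfloor s/2\rfloor + 1$ it suffices to check that $\beta_{\lfloor s/2\rfloor + 1}\neq 0$. For this I would return to the defining sum \eqref{eq-abeta} with $\alpha_{ij}=(i!j!)^{-1}$ and $i = \lfloor s/2\rfloor+1$: the summation range for $q$ is $\max\{0,2i-s\}\le q\le \min\{2i,s\} = s$, and $2i - s$ equals $2$ if $s$ is even and $1$ if $s$ is odd; in either case the sum has only one or two terms (since $2i$ exceeds $s$), and one computes it explicitly to be nonzero — in fact its sign is $(-1)^{i}$ times a positive quantity, which I would verify by writing out the one- or two-term alternating sum of reciprocal factorials. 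Next, $\sigma = s-1$: by \eqref{eq-ijdelta}, $\delta_{ij} = (i!j!)^{-1}$ whenever $i+j\le s-1$, so $\tilde\delta_{ij} = \delta_{ij} - (i!j!)^{-1} = 0$ there, giving $\sigma \ge s-1$; for the reverse inequality I would exhibit a pair $i+j = s$ with $\tilde\delta_{ij}\neq 0$, again by unwinding \eqref{eq-adelta} and observing the extra $(i+j+1)$ weighting forces a nonzero value (the $q$-range collapses enough to evaluate it). Finally $\kappa = \min(2\zeta, 2\rho+1, \sigma+2)$: once $\zeta = \lfloor s/2\rfloor+1$, $\sigma = s-1$, and $\rho\ge \lfloor (s+1)/2\rfloor$ are in hand, we get $2\zeta = 2\lfloor s/2\rfloor + 2 \ge s+1$, $\sigma + 2 = s+1$, and $2\rho+1 \ge 2\lfloor(s+1)/2\rfloor + 1 \ge s+1$, so the minimum is $s+1$; a short even/odd case check confirms equality is attained (the $\sigma+2$ term always equals $s+1$, so $\kappa = s+1$ regardless).

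The heart of the proof is the bound $\rho \ge \lfloor (s+1)/2\rfloor$, i.e.\ showing that the $\rho\times\rho$ principal submatrix $\Gamma_\rho = (\gamma_{ij})_{0\le i,j\le \rho-1}$ is negative definite for $\rho = \lfloor (s+1)/2\rfloor$. By Lemma~\ref{lem-bgd}, for this range of indices ($i+j \le 2\rho - 2 \le s-1$) we have the clean closed form $\gamma_{ij} = -\bigl(i!\,j!\,(i+j+1)\bigr)^{-1}$. So I must show that the matrix with entries $\bigl(i!\,j!\,(i+j+1)\bigr)^{-1}$ is positive definite. The key trick is the integral representation $(i+j+1)^{-1} = \int_0^1 t^{i+j}\,\mathrm{d}t$, which lets me write, for any vector $c = (c_0,\dots,c_{\rho-1})$,
\[
  \sum_{i,j=0}^{\rho-1} \frac{c_i c_j}{i!\,j!\,(i+j+1)}
  = \int_0^1 \Bigl(\sum_{i=0}^{\rho-1} \frac{c_i}{i!}\, t^i\Bigr)^{\!2}\,\mathrm{d}t \;\ge\; 0,
\]
with equality only if the polynomial $\sum_i (c_i/i!)\,t^i$ vanishes identically on $[0,1]$, forcing $c = 0$. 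Hence $-\Gamma_\rho$ is positive definite, so $\Gamma_\rho$ is negative definite, giving $\rho \ge \lfloor(s+1)/2\rfloor$. I expect this positive-definiteness argument — recognizing the Hilbert-type/Hankel structure and the right integral representation — to be the main obstacle, or at least the one nontrivial idea; everything else is careful but routine evaluation of small alternating sums of reciprocal factorials coming out of \eqref{eq-abeta} and \eqref{eq-adelta}. One caveat I would keep in mind: the closed form for $\gamma_{ij}$ from \eqref{eq-ijgamma} is only guaranteed when $i+j\le s-1$, so I must make sure the index range $2\rho - 2 \le s-1$ is respected, which is exactly why the bound is stated as $\rho \ge \lfloor (s+1)/2\rfloor$ rather than an equality — pushing $\rho$ larger would require the general formula \eqref{eq-agamma} and is not needed for the stability conclusion.
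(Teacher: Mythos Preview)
Your proposal is correct and follows essentially the same approach as the paper: both handle $\zeta$, $\sigma$, $\kappa$ by direct evaluation after invoking Lemma~\ref{lem-bgd}, and both prove the $\rho$ bound by recognizing that $-\Gamma_{\lfloor(s+1)/2\rfloor}$ has entries $(i!j!(i+j+1))^{-1}$ and is positive definite. The only cosmetic difference is that the paper factors out the diagonal $\DD = \diag(0!,1!,\dots)$ to identify this with the Hilbert matrix and cite its positive definiteness, whereas you supply the underlying integral representation $\int_0^1 t^{i+j}\,\mathrm{d}t$ directly; these are the same argument, yours being slightly more self-contained.
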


\begin{table}
  \centering 
    \begin{tabular}{c|cccccccccccccccccccc}
      \hline
      $s$&1&2&3&4&5&6&7&8&9&10&11&12&13&14&15&16&17&18&19&20\\
      \hline
      $\zeta$&1&2&2&3&3&4&4&5&5&6&6&7&7&8&8&9&9&10&10&11\\
      $\rho$&1&2&2&2&3&4&4&4&5&6&6&6&7&8&8&8&9&10&10&10\\
      $\sigma$&0&1&2&3&4&5&6&7&8&9&10&11&12&13&14&15&16&17&18&19\\
      $\kappa$&2&3&4&5&6&7&8&9&10&11&12&13&14&15&16&17&18&19&20&21\\		
      \hline
    \end{tabular}
    \caption{Values of $\zeta$, $\rho$, $\sigma$ and $\kappa$ in Definition~\ref{defn-indices} for some $s$. \label{tab-zrs}}
\end{table}

In the rest of the paper, we use $\tau_0$ and $C$ (with or without 
subscripts) to denote a constant that is independent of
$h$ and $\tau$, but generally dependent on the order of SAT method $s$, the
polynomial degree $p$, the mesh regularity constant, the norm of
$g(u)$ and $f(u)$ in \eqref{eq-gf}, the constant in the causality
condition $c_{\max}$, etc. The same symbol $C$ may represent different
values at different places. We will also extensively use the fact
$\tau \le 1$ to simplify our estimates.

Now, consider how we might attempt to bound the right hand side of
\eqref{eq-rsv-norm}.  The next result, Lemma~\ref{lem-est}, which is proved using Lemma~\ref{lem-zrsk},  tells us which low-order
terms can be ignored while doing so. 
Proposition~\ref{prop-xivest-norm-rmd} tells us how the remaining high-order terms in \eqref{eq-rsv-norm} can be bounded by low-order ones.
These ideas complete the analysis as shown next.

\begin{lemma}\label{lem-est}
  There exists positive constants $\tau_0$, $C_{\beta,+}$,
  $C_{\gamma,+}$ and $C_{\delta,+}$, and a negative constant
  $C_{\gamma,-}$, such that for all $\tau \leq \tau_0$,
  \begin{align}
    \label{eq-beta}
    \sum_{i = 0}^s \beta_i G_{ii}
    & \le \beta_0 G_{00}
      +
      \left(\beta_{\zeta} + C_{\beta,+} \tau\right)
      G_{\zeta\zeta},
    \\
    \label{eq-estgamma}
    \sum_{i,j=0}^{s-1}\gamma_{ij}H_{ij}
    & \le
      {C}_{\gamma,+}\tau
      G_{\rho\rho}
      +C_{\gamma,-}\sum_{\jj=0}^{\rho-1}H_{ll},
    \\
    \label{eq-estdelta}
    \sum_{i,j = 0}^{s-1}\tilde{\delta}_{ij}F_{ij}
    & \le  C_{\delta, +}\tau^{\sigma+2}G_{00}.
  \end{align}

\end{lemma}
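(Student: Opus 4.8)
The plan is to establish the three estimates one at a time, using in each case the same strategy: invoke Lemmas~\ref{lem-bgd} and~\ref{lem-zrsk} to isolate the handful of ``leading'' terms that should survive on the right-hand side, and bound everything else by a sufficiently high power of $\tau$ via the operator estimates \eqref{eq-xivest-norm}--\eqref{eq-S2est} of Proposition~\ref{prop-xivest-norm-rmd} together with Proposition~\ref{prop-opest-norm}. One may use $\tau \le 1$ freely, and the argument will show that $\tau_0 = 1$ is admissible.

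For \eqref{eq-beta}: by Lemma~\ref{lem-zrsk} we have $\beta_0 = 1$ and $\beta_i = 0$ for $1 \le i < \zeta$, so $\sum_{i=0}^s \beta_i G_{ii} = \beta_0 G_{00} + \beta_\zeta G_{\zeta\zeta} + \sum_{i=\zeta+1}^s \beta_i G_{ii}$. Estimate \eqref{eq-xivest-norm}, with the two indices taken to be $\zeta$ and $i \ge \zeta$, gives $G_{ii} = \nm{X_i v}_{M_0}^2 \le C \tau^{2(i-\zeta)} G_{\zeta\zeta}$, and since $G_{\zeta\zeta} \ge 0$ the tail sum is bounded by $C_{\beta,+}\tau\, G_{\zeta\zeta}$, which is \eqref{eq-beta}. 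For \eqref{eq-estdelta}: Lemma~\ref{lem-zrsk} gives $\sigma = s-1$ and $\tilde{\delta}_{ij} = 0$ whenever $i+j \le \sigma$, so only pairs with $i + j \ge s$ contribute; for such a pair $|F_{ij}| = \tau|\ip{M_1 X_i v, X_j v}| \le C\tau\,\nm{X_i v}_{M_0}\nm{X_j v}_{M_0}$ by Proposition~\ref{prop-opest-norm} and the norm equivalence $\nm{\cdot}\le C\nm{\cdot}_{M_0}$, and then \eqref{eq-xivest-norm} with one index $0$ gives $|F_{ij}| \le C\tau^{i+j+1}G_{00} \le C\tau^{\sigma+2}G_{00}$; summing over the finitely many contributing pairs yields \eqref{eq-estdelta}.

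The real work is \eqref{eq-estgamma}. I would split $\sum_{i,j=0}^{s-1}\gamma_{ij}H_{ij}$ into the $\rho\times\rho$ principal block $\sum_{0 \le i,j \le \rho-1}\gamma_{ij}H_{ij} = \operatorname{tr}(\Gamma_\rho H_\rho)$, with $H_\rho = (H_{ij})_{0\le i,j\le\rho-1}$ positive semidefinite since $\tau D \ge 0$, plus the remaining terms, each having $\max(i,j) \ge \rho$. For the block, negative definiteness of $\Gamma_\rho$ (Definition~\ref{defn-indices}) together with the trace inequality $\operatorname{tr}(AB) \le \lambda_{\max}(A)\operatorname{tr}(B)$ valid for $B \ge 0$ gives $\operatorname{tr}(\Gamma_\rho H_\rho) \le \lambda_{\max}(\Gamma_\rho)\sum_{l=0}^{\rho-1}H_{ll}$ with $\lambda_{\max}(\Gamma_\rho) < 0$. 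For a remaining term, Cauchy--Schwarz for the semidefinite form $\tau D$ gives $|H_{ij}| \le \sqrt{H_{ii}H_{jj}}$; if $j \ge \rho$ then \eqref{eq-xivest-snm} with indices $\rho$ and $j$ gives $H_{jj} \le C\tau^{2(j-\rho)+1}G_{\rho\rho} \le C\tau G_{\rho\rho}$. If both indices are $\ge \rho$ this already bounds the term by $C\tau G_{\rho\rho}$; if only $j \ge \rho$ then Young's inequality splits it as $|H_{ij}| \le \epsilon H_{ii} + C_\epsilon \tau G_{\rho\rho}$ with $i < \rho$. Collecting, $\sum_{i,j=0}^{s-1}\gamma_{ij}H_{ij} \le \big(\lambda_{\max}(\Gamma_\rho) + \epsilon C'\big)\sum_{l=0}^{\rho-1}H_{ll} + C_\epsilon\tau\, G_{\rho\rho}$ for a combinatorial constant $C'$, and choosing $\epsilon$ so small that $\lambda_{\max}(\Gamma_\rho) + \epsilon C' < 0$ yields \eqref{eq-estgamma} with $C_{\gamma,-} = \tfrac12\lambda_{\max}(\Gamma_\rho)$ and $C_{\gamma,+} = C_\epsilon$.

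The step I expect to be the main obstacle is precisely this bookkeeping in \eqref{eq-estgamma}: one must check that every off-block cross term is absorbed into $\tau G_{\rho\rho}$ plus an arbitrarily small multiple of $\sum_{l<\rho}H_{ll}$, with no surviving $G_{00}$ term and no surviving $H_{ll}$ with $l \ge \rho$, since only then is the genuinely negative contribution of $\Gamma_\rho$ preserved under the perturbation. This hinges on the extra half power of $\tau$ in \eqref{eq-xivest-snm} — a full power after squaring — available for each index exceeding $\rho$, and on $\rho \ge \lfloor(s+1)/2\rfloor \ge 1$ from Lemma~\ref{lem-zrsk}, which guarantees the block $\Gamma_\rho$ is nonempty and strictly negative definite.
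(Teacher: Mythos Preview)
Your proposal is correct and follows essentially the same route as the paper's proof: the same splitting of the $\beta$-sum at $i=\zeta$, the same $\rho\times\rho$ block extraction for the $\gamma$-sum with Cauchy--Schwarz plus Young on the off-block cross terms, and the same $F_{ij}\le C\tau^{i+j+1}G_{00}$ bound for the $\tilde\delta$-sum. The only cosmetic difference is that you invoke the trace inequality $\operatorname{tr}(\Gamma_\rho H_\rho)\le\lambda_{\max}(\Gamma_\rho)\operatorname{tr}(H_\rho)$ directly, whereas the paper phrases the same step as ``$\Gamma_\rho-C_-I_\rho$ remains negative definite for some $C_-<0$'' and then cites an external lemma to conclude $\sum_{i,j}[\Gamma_\rho-C_-I_\rho]_{ij}H_{ij}\le 0$; these are equivalent (your $\lambda_{\max}(\Gamma_\rho)$ is the paper's $C_-$).
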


\begin{theorem}\label{thm-sat1}
	There exists a constant $\tau_0$ such that for all $\tau \leq \tau_0$, we have 
	\begin{equation}\label{eq-Rsvest}
          \nm{R_s v}_{M}  \leq \left(1 + C\tau^{s+1}\right)\nm{v}_{M_0}, \quad
          \text{ for all } v\in V_h.
	\end{equation}
\end{theorem}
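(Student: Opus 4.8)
The plan is to assemble ingredients that are already prepared. I would start from the identity~\eqref{eq-rsv-norm} for $\nm{R_s v}_M^2$, apply the three bounds of Lemma~\ref{lem-est} to its first three sums, and control the final ``$S_2$'' term with~\eqref{eq-S2est}. Since $X_0 = I$ we have $G_{00} = \ip{X_0 v, X_0 v}_{M_0} = \nm{v}_{M_0}^2$, and $\beta_0 = 1$ by Lemma~\ref{lem-bgd}, so the whole task reduces to showing that every surviving term other than $\beta_0 G_{00}$ is bounded by $C\tau^{s+1}\nm{v}_{M_0}^2$; the claimed estimate~\eqref{eq-Rsvest} then follows on taking square roots and using $\sqrt{1+x}\le 1+x$ for $x\ge 0$.

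Concretely, for $\tau \le \tau_0$ I expect to reach
\begin{equation*}
  \nm{R_s v}_M^2 \le G_{00}
  + \bigl(\beta_\zeta + C_{\beta,+}\tau\bigr)G_{\zeta\zeta}
  + C_{\gamma,+}\tau\,G_{\rho\rho}
  + C_{\gamma,-}\sum_{l=0}^{\rho-1}H_{ll}
  + C_{\delta,+}\tau^{\sigma+2}G_{00}
  + C_2\tau^{2s+1}\nm{v}_{M_0}^2 .
\end{equation*}
The sum $C_{\gamma,-}\sum_l H_{ll}$ is simply dropped, since $C_{\gamma,-}<0$ and each $H_{ll}=\snm{X_l v}_{\tau D}^2\ge 0$. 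For the remaining $G$-terms I would use Proposition~\ref{prop-xivest-norm-rmd} with $i=0$, which gives $G_{jj}=\nm{X_j v}_{M_0}^2\le C_2^2\tau^{2j}\nm{v}_{M_0}^2$; hence $\bigl(\beta_\zeta+C_{\beta,+}\tau\bigr)G_{\zeta\zeta}\le C\tau^{2\zeta}\nm{v}_{M_0}^2$ (no sign information on $\beta_\zeta$ is needed, since $|\beta_\zeta+C_{\beta,+}\tau|\le C$), $C_{\gamma,+}\tau\,G_{\rho\rho}\le C\tau^{2\rho+1}\nm{v}_{M_0}^2$, and $C_{\delta,+}\tau^{\sigma+2}G_{00}=C\tau^{\sigma+2}\nm{v}_{M_0}^2$. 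By Lemma~\ref{lem-zrsk}, $\min(2\zeta,2\rho+1,\sigma+2)=\kappa=s+1$, so every exponent here is at least $s+1$; using $\tau\le1$, each of these terms, as well as $C_2\tau^{2s+1}\nm{v}_{M_0}^2$, is $\le C\tau^{s+1}\nm{v}_{M_0}^2$. Adding up yields $\nm{R_s v}_M^2\le(1+C\tau^{s+1})\nm{v}_{M_0}^2$, as desired.

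I do not expect a genuine obstacle in this final step: the heavy lifting has been done in Lemmas~\ref{lem-ibp}--\ref{lem-est}, which supply the discrete integration-by-parts identity and the negative-definiteness of $\Gamma_\rho$ that drive the cancellations. The only point requiring care is the bookkeeping of orders --- making sure the high-order remainders of $G$- and $F$-type genuinely carry the exponent $\kappa=s+1$, which is precisely what Definition~\ref{defn-indices} and Lemma~\ref{lem-zrsk} are designed to certify --- together with possibly shrinking $\tau_0$ once more so that all implied constants stay finite and the estimates~\eqref{eq-beta}--\eqref{eq-estdelta} of Lemma~\ref{lem-est} are in force.
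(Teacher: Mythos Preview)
Your proposal is correct and follows essentially the same approach as the paper's proof: apply Lemma~\ref{lem-est} to the identity~\eqref{eq-rsv-norm}, drop the nonpositive $C_{\gamma,-}$-term, bound $G_{\zeta\zeta}$ and $G_{\rho\rho}$ via Proposition~\ref{prop-xivest-norm-rmd}, bound the $S_2$-term via~\eqref{eq-S2est}, and invoke Lemma~\ref{lem-zrsk} to identify the minimal exponent as $\kappa=s+1$. The paper's write-up is slightly more compressed, but the logical skeleton and the cited lemmas are identical.
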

\begin{proof}
  Using the estimates of Lemma~\ref{lem-est} in~\eqref{eq-rsv-norm},
  \begin{equation}
    \label{eq:2}
  \begin{aligned}
    \| R_s v \|_M^2
    & \le
    (1 + C_{\delta, +}\tau^{\sigma+2}) \| v \|_{M_0}^2
      + \left(\beta_{\zeta} + C_{\beta,+} \tau\right) G_{\zeta\zeta}
      + {C}_{\gamma,+}\tau     G_{\rho\rho}
    \\
    & \quad       + \tau \ip{M_0S_2v,M^{-1}M_1S_2v}.
  \end{aligned}    
  \end{equation}
  Here we have used that $C_{\gamma, -}<0$, $\beta_0=1$ (by
  Lemma~\ref{lem-bgd}), and $G_{00} = \| v \|_{M_0}^2$.  Next, we use
  a consequence of Proposition~\ref{prop-xivest-norm-rmd},
  $ G_{ii} = \nm{X_i v}_{M_0}^2\leq C \tau^{2 i}\nm{v}_{M_0}^2,$ for
  indices $i=\zeta$ and $\rho$ in~\eqref{eq:2}.  The result, when
  combined with an application of \eqref{eq-S2est}, yields
  \begin{equation}\label{eq-Rsestsq}
    \nm{R_s v}_{M}^2\leq  \left(1 + C\tau^{\min(2\zeta,2\rho+1,\sigma+2,2s+1)}\right)\nm{v}_{M_0}^2.
  \end{equation}
  By Lemma~\ref{lem-zrsk},
  $\kappa = \min(2\zeta,2\rho+1,\sigma+2) = s+1.$ Hence the theorem
  follows after taking the square root on both sides in
  \eqref{eq-Rsestsq}.
\end{proof}

\begin{remark}
An equivalent way of stating~\eqref{eq-Rsvest} is via the 
following operator norm of $R_s$
\begin{equation}\label{eq-Rsnorm}
  \nm{R_s}_{L(M_0,M)} := \sup_{0 \neq v\in V_h}\frac{\nm{R_s v}_M}{\nm{v}_{M_0}}.
\end{equation}
Clearly, \eqref{eq-Rsvest} is equivalent to
$\nm{R_s}_{L(M_0,M)} \leq 1 + C\tau^{s+1}.$  
\end{remark}

\subsection{Subtents obeying a CFL condition}\label{sec-subtents}

Theorem~\ref{thm-sat1} allows us to identify a CFL condition and a
practical subdivision of the range of pseudotime that theoretically
guarantees weak stability, as we shall now see.
We  divide the reference tent into $r$ subtents, with 
\begin{equation*}
T_{[l]} = \{(x,t): x\in \Omega^\vv, \vphi(x,\hat{t}^{[l]})\leq t\leq \vphi(x,\hat{t}^{[l+1]})\},\quad  l = 1,\cdots, r. 
\end{equation*}
Here $\hat{t}^{[l]} = (l-1)/r$. The time step size for each subtent is $\tau = r^{-1}$. In the $l$th subtent, the propagator is defined as 
\begin{equation*}
R_{[l],s}v = \sum_{k = 0}^{s-1} {(k!)^{-1}}X_k^{[l]} v + (M^{[l]})^{-1}M_0^{[l]}({s!})^{-1}X_s^{[l]} v.
\end{equation*}
Here $M_0^{[l]} = M_0 - (l-1)\tau M_1$, $M^{[l]} = M_0^{[l]} - \tau M_1$ and 
\begin{equation*}
X_0^{[l]} = I, \quand X_i^{[l]} = \tau (M_0^{[l]})^{-1}(A+kM_1)X_{i-1}^{[l]}, \quad \forall i \geq 1.
\end{equation*}
The final solution operator at $\hat{t} = 1$ is given by 
\begin{equation*}
R_{r,s} = R_{[r],s}\circ R_{[r-1],s}\circ\cdots \circ R_{[1],s}.
\end{equation*} 

\begin{theorem}\label{thm-satstab}
	If $\tau = r^{-1} \leq C h^{1/s}$ for $C$ sufficiently small, then
	\begin{equation}\label{eq-Rrsstable}
	\nm{R_{r,s}v}_{M(1)} \leq (1+ Ch)\nm{v}_{M_0}.
	\end{equation}
\end{theorem}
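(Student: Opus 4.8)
The plan is to apply Theorem~\ref{thm-sat1} on each of the $r$ subtents and then telescope the resulting energy inequalities, using the crucial fact that the mass operator at the top of the $l$-th subtent is literally the mass operator at the bottom of the $(l+1)$-th subtent. With $\tau = r^{-1}$ and $\hat t^{[l]} = (l-1)\tau$ one has $M_0^{[l]} = M_0 - (l-1)\tau M_1 = M(\hat t^{[l]})$ and $M^{[l]} = M_0^{[l]} - \tau M_1 = M(\hat t^{[l+1]}) = M_0^{[l+1]}$, together with $M_0^{[1]} = M_0$ and $M^{[r]} = M(1)$. The first thing I would check is that each subtent, written in its own rescaled pseudotime, has exactly the structure analyzed in Section~\ref{sec-sat}: an affine-linear-in-pseudotime mass operator with constant term $M_0^{[l]}$ and slope $M_1$, the same $A$, and the same semidefinite $D = -(A^\top + A + M_1)$. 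Since $\hat t^{[l]},\hat t^{[l+1]}\in[0,1]$, the operators $M_0^{[l]}$ and $M^{[l]}$ are positive definite, and all operator-norm bounds in Propositions~\ref{prop-opest-norm} and~\ref{prop-xivest-norm-rmd} hold with constants independent of $l$ (and of $h$, $\tau$), inherited --- exactly as for $l=1$ --- from the uniform positive-definiteness and boundedness of $M(\hat t)$, $\hat t\in[0,1]$, provided by the causality condition. Hence Theorem~\ref{thm-sat1} applied on subtent $l$ yields a single constant $C$ valid for all $l$ with
\[
  \nm{R_{[l],s}w}_{M^{[l]}} \le \bigl(1 + C\tau^{s+1}\bigr)\,\nm{w}_{M_0^{[l]}}, \qquad w\in V_h,
\]
provided $\tau\le\tau_0$, a restriction that is automatic once the CFL constant is small and $h$ bounded.

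Next I would telescope. Setting $v_0 = v$ and $v_l = R_{[l],s}v_{l-1}$, so that $v_r = R_{r,s}v$, the subtent estimate together with $M_0^{[l]} = M^{[l-1]}$ gives
\[
  \nm{v_l}_{M^{[l]}} \le \bigl(1 + C\tau^{s+1}\bigr)\,\nm{v_{l-1}}_{M^{[l-1]}}, \qquad l = 2,\dots,r,
\]
while $\nm{v_1}_{M^{[1]}}\le(1 + C\tau^{s+1})\nm{v}_{M_0}$ at the bottom. Iterating these $r$ inequalities and using $M^{[r]} = M(1)$ leaves
\[
  \nm{R_{r,s}v}_{M(1)} \le \bigl(1 + C\tau^{s+1}\bigr)^r \nm{v}_{M_0}.
\]
To finish, since $r\tau = 1$ the geometric factor is bounded by $\exp\bigl(C r\tau^{s+1}\bigr) = \exp\bigl(C\tau^{s}\bigr)$, and the CFL hypothesis $\tau = r^{-1}\le Ch^{1/s}$ gives $\tau^s\le Ch$; as $h$ ranges over a bounded set, $\exp(Ch)\le 1 + Ch$ after enlarging $C$, which is~\eqref{eq-Rrsstable}. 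This is precisely where the exponent $1/s$ in the CFL condition enters: the per-subtent loss $\tau^{s+1}$, accumulated over $r = \tau^{-1}$ subtents, becomes $\tau^s$, and timesteps of size $h^{1/s}$ keep this of size $h$.

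I expect the main obstacle --- everything else being routine --- to be the very first step: verifying that Theorem~\ref{thm-sat1} holds on every subtent with one constant uniform in $l$. This reduces to uniform (in $\hat t\in[0,1]$, $h$, and $l$) positivity and norm bounds for $M(\hat t)$, $M(\hat t)^{-1}$, and the recursively built operators $X_i^{[l]}$; these follow from the causality condition by exactly the arguments that produced Propositions~\ref{prop-assp}--\ref{prop-xivest-norm-rmd}, so the only genuine care needed is to ensure that no constant silently depends on which subtent is under consideration.
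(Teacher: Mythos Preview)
Your proposal is correct and follows essentially the same approach as the paper's proof: apply Theorem~\ref{thm-sat1} uniformly on each subtent (justified by the uniform validity of Propositions~\ref{prop-opest-norm} and~\ref{prop-xivest-norm-rmd}), telescope via the identity $M^{[l]}=M_0^{[l+1]}$, and then convert $(1+C\tau^{s+1})^r$ into $1+Ch$ using $r\tau=1$ and $\tau^s\le Ch$. Your handling of the final step through $\exp(C\tau^s)\le 1+Ch$ is slightly more explicit than the paper's direct bound $(1+C\tau^{s+1})^r\le 1+C\tau^{s+1}r$ (which implicitly relies on a changing constant), but the substance is identical.
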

\begin{proof}
  Note that $M_0^{[l]}$, $M^{[l]}$ and $X_{j}^{[l]}$ still satisfy
  Propositions \ref{prop-opest-norm} and \ref{prop-xivest-norm-rmd}, hence
  the estimate \eqref{eq-Rsvest} of Theorem~\ref{thm-sat1} holds after
  replacing $R_s$ with $R_{[l],s}$, namely
  $\nm{R_{[l],s}}_{L(M^{[l-1]},M^{[l]})}\leq 1+C\tau^{s+1}$, for all
  $1\leq l \leq r$. As a result, for all $v \in V_h$, employing an 
  analog of the operator norm in~\eqref{eq-Rsnorm}, we have
  \[
    \begin{split}
      \nm{R_{r,s}v}_{M(1)}
      & \le \nm{R_{[r],s}}_{L(M^{[r-1]},M^{[r]})} \cdots \nm{R_{[1],s}}_{L(M^{[0]},M^{[1]})}\nm{v}_{M_0} \\
      & \le   \left(1 + C\tau^{s+1}\right)^{r}\nm{v}_{M_0} \\
      & \le  \left(1+C\tau^{s+1}r\right)\nm{v}_{M_0}\\ 
       & \le (1+C\tau^{s})\nm{v}_{M_0} \\
       & \le  (1+Ch)\nm{v}_{M_0}. 
    \end{split}
  \]
  Here we have used the fact $\tau = r^{-1}$ in the second last inequality and $\tau \leq C h^{1/s}$ in the last inequality. 
\end{proof}

Recall that $\delta \leq C h$. In the physical domain, the constraint $\tau \leq C h^{1/s}$ for the pseudotime coordinate should be interpreted as $\Delta t = \tau \delta \leq C h^{1 + 1/s}$, which leads to the following summary of the main result we have proven.

\begin{theorem}
  \label{thm:summary}
  The SAT timestepping for the hyperbolic equation \eqref{eq-slhs} is
  weakly stable---in the sense of \eqref{eq-Rrsstable}---under the
  $(1+1/s)$-CFL condition $\Delta t \leq C h^{1+1/s}$ whenever a
  spatial discretization satisfying the conclusions of 
  Propositions~\ref{prop-assp} and~\ref{prop-opest-norm} is used.
\end{theorem}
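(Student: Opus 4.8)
The plan is to derive Theorem~\ref{thm:summary} as a repackaging of Theorem~\ref{thm-satstab} together with the change of variables back to the physical tent, keeping careful track of the fact that the only structural input used along the way is the conclusions of Propositions~\ref{prop-assp} and~\ref{prop-opest-norm} (together with Proposition~\ref{prop-xivest-norm-rmd}, which follows from Proposition~\ref{prop-opest-norm} and the recursion \eqref{eq-Xi} defining the SAT scheme itself). The endpoint of the chain is the single-tent estimate \eqref{eq-Rsvest}, so I would organize the work as: (i) prove \eqref{eq-Rsvest}; (ii) telescope it over subtents to obtain \eqref{eq-Rrsstable}; (iii) translate the pseudotime constraint $\tau \le Ch^{1/s}$ into the physical-domain constraint $\Delta t \le Ch^{1+1/s}$.

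For step (i), starting from the definition \eqref{eq-sat} of $R_s$ and using selfadjointness of $M$, $M_0$ together with the splitting $M = M_0 - \tau M_1$, I would expand $\nm{R_s v}_M^2$ into the identity \eqref{eq:1}, expressed through the symmetric arrays $F$, $G$, $H$ built from the iterates $X_i v$. The crucial reduction is Lemma~\ref{lem-ibp}: each strictly above-diagonal entry $G_{ij}$ is rewritten in terms of entries closer to the diagonal of $G$, $F$, $H$, a discrete integration-by-parts adapted to the mass operator that is affine in $\hat t$. I expect this identity, and the bookkeeping it triggers, to be the main obstacle: iterating it via Lemma~\ref{lem-exps} and then computing the resulting coefficients for $\alpha_{ij} = (i!j!)^{-1}$ (Lemmas~\ref{lem-bgd} and~\ref{lem-zrsk}) is where the sharp exponent is produced, through the three facts that the diagonal $G$-coefficients $\beta_i$ vanish for $1 \le i \le s/2$, that the principal $\rho\times\rho$ block of $(\gamma_{ij})$ is negative definite with $\rho \ge \lfloor(s+1)/2\rfloor$, and that the corrected coefficients $\tilde{\delta}_{ij}$ vanish for $i+j\le s-1$. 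Combining these with the norm bounds of Propositions~\ref{prop-opest-norm} and~\ref{prop-xivest-norm-rmd}---in particular $\nm{X_i v}_{M_0}^2 \le C\tau^{2i}\nm{v}_{M_0}^2$ and the high-order bound \eqref{eq-S2est} for the residual term $\tau\ip{M_0 S_2 v, M^{-1} M_1 S_2 v}$---through Lemma~\ref{lem-est} yields $\nm{R_s v}_M^2 \le \bigl(1 + C\tau^{\min(2\zeta,2\rho+1,\sigma+2,2s+1)}\bigr)\nm{v}_{M_0}^2 = (1 + C\tau^{s+1})\nm{v}_{M_0}^2$, i.e.\ Theorem~\ref{thm-sat1}.

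For step (ii), I would subdivide the reference tent into $r$ subtents with time step $\tau = r^{-1}$ as in Section~\ref{sec-subtents}. Since each $M_0^{[l]}$, $M^{[l]}$, $X_j^{[l]}$ still satisfies Propositions~\ref{prop-opest-norm} and~\ref{prop-xivest-norm-rmd}, step (i) applied on each subtent gives $\nm{R_{[l],s}}_{L(M^{[l-1]},M^{[l]})} \le 1 + C\tau^{s+1}$ for $1\le l\le r$; telescoping the operator-norm product over $l$ and using $r\tau = 1$ together with $\tau^s \le 1$ gives $\nm{R_{r,s} v}_{M(1)} \le (1+C\tau^{s+1})^r\nm{v}_{M_0} \le (1 + C\tau^s)\nm{v}_{M_0}$, and imposing $\tau = r^{-1}\le Ch^{1/s}$ with $C$ small bounds the right side by $(1+Ch)\nm{v}_{M_0}$, which is \eqref{eq-Rrsstable}.

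For step (iii), the pseudotime step $\tau$ on a subtent corresponds to a physical time increment $\Delta t = \tau\,\delta$, and the causality condition \eqref{eq-causa} forces $\delta \le Ch$; hence $\tau \le Ch^{1/s}$ is implied by $\Delta t \le Ch^{1+1/s}$, so the $(1+1/s)$-CFL condition suffices for the weak stability bound \eqref{eq-Rrsstable}. Finally, since the only properties of $M_0$, $M_1$, $A$, $M$, $D$ invoked anywhere in steps (i)--(iii) are the selfadjointness and positivity assertions together with the semidefiniteness of $D$ in Proposition~\ref{prop-assp} and the uniform operator-norm bounds of Proposition~\ref{prop-opest-norm}, the conclusion holds for any spatial discretization meeting those hypotheses, which is the claim of Theorem~\ref{thm:summary}.
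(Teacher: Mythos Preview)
Your proposal is correct and follows essentially the same route as the paper: you reconstruct Theorem~\ref{thm-sat1} via the chain Lemma~\ref{lem-ibp} $\to$ Lemma~\ref{lem-exps} $\to$ Lemmas~\ref{lem-bgd}, \ref{lem-zrsk}, \ref{lem-est}, then telescope over subtents exactly as in Theorem~\ref{thm-satstab}, and finally convert the pseudotime constraint to the physical $(1+1/s)$-CFL condition using $\delta \le Ch$. The only minor point is that in step (iii) the implication ``$\Delta t \le Ch^{1+1/s} \Rightarrow \tau \le Ch^{1/s}$'' tacitly uses $\max_x \delta(x) \gtrsim h$ (not just $\delta \le Ch$), which is implicit in the tent-pitching construction but not stated explicitly; the paper is equally informal on this point.
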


\begin{remark}
From \eqref{eq-Rrsstable}, it can be deduced that 
\begin{equation*}
\nm{\hat{u}_h^m}_{L^2(\Omega)} \leq \exp{(Ct_{\max})}\nm{\hat{u}_h^0}_{L^2(\Omega)}, \quad t_{\max} = m\Delta t. 
\end{equation*}
In other words, the $L^2$ norm of the solution at the final time is
bounded by a scalar multiple of that of the initial
data---see~\cite[Remark 3.14]{DrakeGopalSchob21} for further details.
\end{remark}

\begin{remark}\label{rmk-errorest}
  Based on the stability result, one can prove a high-order error estimate for the fully discrete SAT-DG scheme for linear
  hyperbolic systems---see~\cite[Theorem~4.19]{DrakeGopalSchob21}.
\end{remark}

\section{Improved stability with low-order elements}\label{sec-low}

In this section, we study the improved stability properties when a high-order SAT method is coupled with a low-order DG spatial discretization. We now  proceed under the additional assumption that
\begin{equation}
  \label{eq:assume-pwconst}
  \text{$\mathcal{G}$ and $\mathcal{L}^{(j)}$ are constant on each mesh element }
    K \in \mathcal{T}.
\end{equation}

\subsection{Key ideas for the low-order case}\label{sec-estlowXk}

Again, the analysis is based on the identity~\eqref{eq-rsv-norm}. This
time however, we will observe that many terms there can simply be
bounded, in the low-order case, by the dissipation terms
$H_{ii} = |X_i v|_{\tau D}^2$, resulting in improved stability.

The first idea towards making this precise is the identification of a
high-order spatial derivative term in $X_i$. To define this derivative
term, first note that under the assumption~\eqref{eq:assume-pwconst},
$M_0$ and $M_1$ reduce to point-wise linear operators
\begin{align}
  \label{eq-lowM0M1}
M_0w   = g(w) - f(w)\, \grad_x\vphi_{\mathrm{bot}}  ,\qquad
  M_1w = f(w) \,\grad_x \delta,
\end{align}
for any $w \in V_h$. 
Furthermore, using integration by parts in \eqref{eq-M0M1A-A},
as in \cite[Lemma~3.2]{DrakeGopalSchob21}, 
it can be verified that for all $v, w \in V_h$, 
\begin{equation*}
\begin{aligned}
\ip{A{w}, v} &= -\ip{ \divg_x(\delta f({w})), v} - \ip{\delta \mathcal{D}\lb {w} \rb , \{v \}}_{\Face^\vv}  - \ip{\delta S\lb {w} \rb, \lb v \rb}_{\Face^\vv} . 
\end{aligned}
\end{equation*}
Here and throughout, differential operators like $\divg_x$ and $\grad_x$ above, are
applied element by element.  By the product rule on each element, we
see that
$\divg_x(\delta f(w)) = \delta\divg_x f(w) + f(w) \grad_x\delta$ is in
$P_p(K)^\sysize$ for any $w \in V_h$ due to~\eqref{eq:assume-pwconst}. Hence, letting
$A_1w = - \delta\divg_x f(w)$, and defining a lifting operator $L$
by
$ \ip{ L w, v} := \ip{ \delta \mathcal{D}\lb w\rb, \{v\}}_{\Face^\vv}
+ \ip{\delta S\lb w \rb,\lb v\rb}_{\Face^\vv}$ for all
$ v, w \in V_h, $
we can rewrite $A$ as a sum of three linear operators, 
\begin{equation}
  \label{eq-lowA}
  A = A_1 - M_1 - L.
\end{equation}
Let us define the operator $K$ such that \[Kw = -\tau M_0^{-1}\divg_x f(w)\] for any $w\in V_h$. On each
element, $Kw$ has one degree less than~$w$.
The next lemma rewrites the $X_k$ defined
in~\eqref{eq-Xi} using powers of $K$, which represent higher-order spatial derivative operators. As before, all lemmas are  proved in Section~\ref{sec:proofs-sat}.

\begin{lemma}\label{lem-lowXk}
	For all $i \geq 0$, we have
	\begin{equation}\label{eq-lowXk}
	X_i =  \delta^i K^i + Z_i,
	\end{equation}
	where $Z_i$ is defined recursively by
	\begin{equation}\label{eq-Zi}
	Z_0 = 0  \quand Z_i = \tau M_0^{-1}(A+iM_1{ + }L)Z_{i-1} { - } \tau  M_0^{-1} L X_{i-1}.
	\end{equation}
\end{lemma}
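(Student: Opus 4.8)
The plan is to prove~\eqref{eq-lowXk} by induction on $i$. The base case $i=0$ is immediate, since $X_0 = I$ while $\delta^0 K^0 = I$ and $Z_0 = 0$ by convention. For the inductive step I assume $X_{i-1} = \delta^{i-1}K^{i-1} + Z_{i-1}$ and feed this into the recursion~\eqref{eq-Xi}. The first move is to use the splitting $A = A_1 - M_1 - L$ from~\eqref{eq-lowA} to rewrite the operator driving~\eqref{eq-Xi} as $A + iM_1 = A_1 + (i-1)M_1 - L$ and the operator driving~\eqref{eq-Zi} as $A + iM_1 + L = A_1 + (i-1)M_1$. Substituting the inductive hypothesis into $X_i = \tau M_0^{-1}(A_1+(i-1)M_1-L)X_{i-1}$ and regrouping yields
\[
  X_i = \tau M_0^{-1}\bigl(A_1 + (i-1)M_1\bigr)\bigl(\delta^{i-1}K^{i-1}\bigr)
      + \Bigl(\tau M_0^{-1}\bigl(A_1+(i-1)M_1\bigr)Z_{i-1} - \tau M_0^{-1}L\,\delta^{i-1}K^{i-1} - \tau M_0^{-1}L Z_{i-1}\Bigr),
\]
and since $\tau M_0^{-1}L X_{i-1} = \tau M_0^{-1}L\,\delta^{i-1}K^{i-1} + \tau M_0^{-1}L Z_{i-1}$, the parenthesized expression is exactly $Z_i$ in the rewritten form of~\eqref{eq-Zi}. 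Hence the lemma reduces to the single operator identity $\tau M_0^{-1}(A_1 + (i-1)M_1)(\delta^{i-1}K^{i-1}) = \delta^i K^i$.

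Proving this identity is the step I expect to be the main obstacle, and it is where assumption~\eqref{eq:assume-pwconst} enters decisively. Under~\eqref{eq:assume-pwconst}, $M_0$ acts on each mesh element as multiplication by a fixed invertible matrix (by~\eqref{eq-lowM0M1} and positive definiteness of $M_0$, cf. Proposition~\ref{prop-assp}), so $M_0^{-1}$ commutes with multiplication by the scalar piecewise-linear function $\delta$; in particular $\tau M_0^{-1}A_1 w = -\tau\delta M_0^{-1}\divg_x f(w) = \delta K w$ for every $w$. Then, writing $g = K^{i-1}w$ for a fixed $w \in V_h$ and applying the elementwise product rule
\[
  \divg_x\bigl(\delta^{i-1} f(g)\bigr) = \delta^{i-1}\divg_x f(g) + \bigl(\grad_x(\delta^{i-1})\bigr)\!\cdot f(g),
  \qquad \grad_x(\delta^{i-1}) = (i-1)\,\delta^{i-2}\,\grad_x\delta,
\]
together with $f(\delta^{i-1}g) = \delta^{i-1}f(g)$ and the identification $(\grad_x\delta)\!\cdot f(g) = f(g)\,\grad_x\delta = M_1 g$, one finds
\[
  \tau M_0^{-1}A_1(\delta^{i-1}g) = \delta\,K(\delta^{i-1}g) = \delta^i K g - (i-1)\,\tau\,\delta^{i-1}M_0^{-1}M_1 g.
\]
The surviving lower-order term is cancelled exactly by $\tau M_0^{-1}(i-1)M_1(\delta^{i-1}g) = (i-1)\,\tau\,\delta^{i-1}M_0^{-1}M_1 g$, leaving $\delta^i K g = \delta^i K^i w$, which is the claimed identity.

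The remainder is routine bookkeeping: one checks $\delta^{i-1}K^{i-1}w \in V_h$ from the fact that $K$ lowers the elementwise polynomial degree by one, so all the DG operators are applied to legitimate arguments, and keeps straight the harmless sign conventions relating $A_1$ and $K$ (both built from $-\divg_x f$). The only genuinely delicate point is the interplay between the non-constant weight $\delta$ and the divergence hidden inside $K$: the extra term produced when $\divg_x$ falls on $\delta^{i-1}$ is precisely what forces the explicit $(i-1)M_1$ correction term into the recursion~\eqref{eq-Zi}, and spotting this cancellation is what makes the whole computation close.
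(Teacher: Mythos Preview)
Your proof is correct and follows essentially the same approach as the paper: induction on $i$, with the core computation being the product-rule cancellation showing $\tau M_0^{-1}(A_1+(i-1)M_1)(\delta^{i-1}K^{i-1})=\delta^i K^i$. The only cosmetic difference is that the paper subtracts the two recursions first to obtain $X_i-Z_i=\tau M_0^{-1}(A_1+(i-1)M_1)(X_{i-1}-Z_{i-1})$ directly, which spares your regrouping step, but the substance is identical.
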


The message of Lemma~\ref{lem-lowXk} is that $X_i$ can be decomposed
as a scalar multiple of a high-order spatial derivative (namely $K^i$)
plus $Z_i$. When $i\geq p+1$, we have $K^i v = 0$ and $X_i =
Z_i$.

The next key idea is that the norm of $Z_i$ can be bounded by the sum
of $H_{\jj \jj}$, which arises from the dissipation due to DG jumps, as shown in the next lemma. When combined with Lemma~\ref{lem-lowXk}
and the observation that $X_i = Z_i$ for $i\ge p+1$, this then yields
bounds for some of the numbers $G_{ii}$ and $F_{ij}$ in the subsequent result,
Lemma~\ref{lem-low}.

\begin{lemma}\label{lem-Zj}
	For all $i \geq 0$, we have
	\begin{equation}\label{eq-Zj}
	\nm{Z_iv}_{M_0}^2\leq C\tau \sum_{l = 0}^{i-1}H_{\jj \jj}.  
	\end{equation}
\end{lemma}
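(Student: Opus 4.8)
By Lemma~\ref{lem-lowXk}, $Z_i = X_i - \delta^i K^i$ is the remainder left after stripping the leading spatial-derivative term $\delta^iK^i$ off $X_i$, and the claim is that this remainder lies in the ``dissipation range'' measured by the $H_{ll}$. The plan is to unroll the recursion~\eqref{eq-Zi}. Write $z_i = \nm{Z_i v}_{M_0}$ and $d_l = \snm{X_l v}_D$, so $H_{ll} = \tau\, d_l^2$. The homogeneous part of~\eqref{eq-Zi} is a $C\tau$-contraction in the $M_0$-norm: since $\nm{A}$ and $\nm{M_1}$ are bounded by Proposition~\ref{prop-opest-norm}, $\nm{L} \le C$ follows from the definition of $L$ via a discrete trace inequality together with $\delta \le Ch$ and boundedness of $\mathcal D$ and $S$, and $M_0$ is uniformly positive definite with $\nm{M_0^{-1}} \le C_1$ (Propositions~\ref{prop-assp} and~\ref{prop-opest-norm}), so that the $M_0$-norm and the $L^2$-norm are equivalent, one obtains $\nm{\tau M_0^{-1}(A + iM_1 + L)w}_{M_0} \le C\tau\,\nm{w}_{M_0}$ for all $w \in V_h$ and $i \le s$. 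With $Z_0 = 0$ this reduces~\eqref{eq-Zi} to $z_i \le C\tau\, z_{i-1} + \tau\,\nm{M_0^{-1}L X_{i-1} v}_{M_0}$.

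The crux is to control the forcing term by the dissipation, i.e.\ to show $\nm{M_0^{-1}Lw}_{M_0} \le C\,\snm{w}_D$ for all $w \in V_h$. I would do this in two steps. First, element-by-element integration by parts in the definition~\eqref{eq-D} of $D$ — using symmetry of $\mathcal L^{(j)}$, the fact that $\sum_j \partial_j \mathcal L^{(j)} = 0$ makes $\mathcal D$ single-valued across facets, and that $\delta$ vanishes on $\partial \Omega^\vv$ — yields the identity $\snm{w}_D^2 = 2\ip{\delta S \lb w\rb, \lb w\rb}_{\Face^\vv}$, obtained by the same manipulations underlying the semidiscrete energy estimate. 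Second, since $\nm{M_0}, \nm{M_0^{-1}} \le C_1$ give $\nm{M_0^{-1}Lw}_{M_0}^2 \le C\,\nm{Lw}^2$, applying Cauchy--Schwarz facet-by-facet with a $\delta$-weight, a discrete trace inequality, and $\delta \le Ch$ to the definition of $L$ reduces matters to $\nm{Lw}^2 \le C\big( \ip{\delta\, \mathcal D^2 \lb w\rb, \lb w\rb}_{\Face^\vv} + \ip{\delta\, S^2 \lb w\rb, \lb w\rb}_{\Face^\vv}\big)$, which is $\le C\,\snm{w}_D^2$ because $S^2 \preceq \nm{S}\, S$ and the stabilization dominates $\mathcal D^2$ (as with $S \succeq \tfrac12 |\mathcal D|$ for upwind fluxes). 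Applying this bound with $w = X_{i-1}v$ turns the recursion into $z_i \le C\tau\, z_{i-1} + C\tau\, d_{i-1}$.

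It remains to iterate: with $z_0 = 0$ and $\tau \le 1$ to absorb the factors $(C\tau)^{i-1-l} \le 1$, induction gives $z_i \le C\tau \sum_{l=0}^{i-1} d_l$; squaring and using Cauchy--Schwarz on this sum of fixed length $i \le s$ yields $z_i^2 \le C\tau^2 \sum_{l=0}^{i-1} d_l^2 = C\tau \sum_{l=0}^{i-1} \tau\, d_l^2 = C\tau \sum_{l=0}^{i-1} H_{ll}$, which is~\eqref{eq-Zj}. The main obstacle is the crux bound $\nm{M_0^{-1}Lw}_{M_0} \le C\,\snm{w}_D$: it is the only step where the structure of the DG numerical flux enters beyond the plain operator bounds of Proposition~\ref{prop-opest-norm}, and it is precisely the step that would fail for a stabilization too degenerate relative to $\mathcal D$. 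Everything else is bookkeeping with standard inverse and trace inequalities.
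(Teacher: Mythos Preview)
Your proof is correct and follows essentially the same route as the paper: the crux bound $\nm{Lw}\le C\snm{w}_D$ is precisely the content of the paper's preparatory Lemma~\ref{lem-L} (stated there as $\nm{Lv}\le C\tau^{-1/2}\snm{v}_{\tau D}$, proved via the identity $\snm{v}_{\tau D}^2=2\tau\ip{\delta S\lb v\rb,\lb v\rb}_{\Face^\vv}$ and the same trace/inverse ingredients you invoke), and the iteration of the recursion~\eqref{eq-Zi} is carried out identically. Two small remarks: the claim ``$(C\tau)^{i-1-l}\le1$'' is not literally true when the generic constant exceeds~$1$, but since the recursion depth is at most $s$ one simply uses $(C\tau)^{i-l}\le C^s\tau$ instead; and the hypothesis actually needed for the crux bound is just that $S$ be positive \emph{definite} (then $\mathcal D^2\preceq\|\mathcal D\|^2\lambda_{\min}(S)^{-1}S$ automatically), not a specific upwind-type relation $S\succeq\tfrac12|\mathcal D|$ as your parenthetical suggests.
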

\begin{lemma}\label{lem-low}
	For all $i\geq p+1$, we have 
	\begin{align}
	G_{ii}&\leq C \tau \sum_{\jj =0}^{p}H_{\jj \jj},\label{eq-lowXj}\\
	\left|F_{ij}\right| & \leq C\veps^{-1}\tau^{2(i+j-p)+1}G_{00} + \veps\sum_{\jj =0}^{p}H_{\jj \jj}, \quad \forall \veps>0.\label{eq-lowXij}
	\end{align}
\end{lemma}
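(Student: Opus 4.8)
The plan is to combine Lemma~\ref{lem-lowXk} (the decomposition $X_i = \delta^i K^i + Z_i$), the fact that $K^i$ annihilates $V_h$ once $i \ge p+1$ (so $X_i = Z_i$ for such $i$), and Lemma~\ref{lem-Zj} (the bound $\nm{Z_i v}_{M_0}^2 \le C\tau \sum_{l=0}^{i-1} H_{ll}$). For \eqref{eq-lowXj}, I would note that when $i \ge p+1$ we have $X_i v = Z_i v$, so $G_{ii} = \nm{X_i v}_{M_0}^2 = \nm{Z_i v}_{M_0}^2 \le C\tau \sum_{l=0}^{i-1} H_{ll}$ by Lemma~\ref{lem-Zj}. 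The sum on the right runs up to $i-1$, but I need it to stop at $p$. For indices $l$ with $p+1 \le l \le i-1$, I would invoke \eqref{eq-lowXj} itself at index $l$ --- i.e., run an induction on $i \ge p+1$: assuming $G_{ll} \le C\tau \sum_{m=0}^p H_{mm}$ for all $p+1 \le l < i$, the tail terms $H_{ll} = \snm{X_l v}_{\tau D}^2 \le \nm{\tau D}\,\nm{X_l v}^2 \le C\tau G_{ll}$ (using Proposition~\ref{prop-opest-norm} and $\nm{M_0^{-1}} \le C_1$ to pass between $\nm{\cdot}$ and $\nm{\cdot}_{M_0}$) are themselves $\le C\tau^2 \sum_{m=0}^p H_{mm}$, which for $\tau$ small enough gets absorbed, leaving only the $l \le p$ terms. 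This closes the induction and gives \eqref{eq-lowXj} with an $h$-independent constant.

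For \eqref{eq-lowXij}, the hypothesis $i \ge p+1$ together with $j \ge i$ (which is how $F_{ij}$ arises in \eqref{eq-rsv-norm}, since $F_{ij} = F_{ji}$ I may assume $j \ge i \ge p+1$) means \emph{both} $X_i v = Z_i v$ and $X_j v = Z_j v$. Then $F_{ij} = (X_i v, X_j v)_{\tau M_1}$, and by Cauchy--Schwarz in the (semidefinite, by Proposition~\ref{prop-assp}(2) one does not directly get $\tau M_1 \ge 0$, so instead) I would just bound $|F_{ij}| = |\tau(M_1 X_i v, X_j v)| \le \tau \nm{M_1}\, \nm{X_i v}\,\nm{X_j v} \le C\tau\, \nm{X_i v}_{M_0} \nm{X_j v}_{M_0}$. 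Now apply \eqref{eq-xivest-norm} of Proposition~\ref{prop-xivest-norm-rmd} to pull powers of $\tau$ out of \emph{one} factor: $\nm{X_j v}_{M_0} \le C\tau^{j - (p+1)} \nm{X_{p+1} v}_{M_0}$ and similarly $\nm{X_i v}_{M_0} \le C\tau^{i-(p+1)}\nm{X_{p+1}v}_{M_0}$, so $|F_{ij}| \le C\tau^{2(i+j-p) - 1}\, G_{p+1,p+1}$. Wait --- this overshoots the claimed exponent $2(i+j-p)+1$, so actually I only need to extract $\tau^{i + j - 2(p+1)}$ worth, i.e. bound $|F_{ij}| \le C\tau^{i+j-2p-1} G_{p+1,p+1}$, and then use \eqref{eq-lowXj} (already proven) to get $G_{p+1,p+1} \le C\tau \sum_{l=0}^p H_{ll}$. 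That yields $|F_{ij}| \le C\tau^{i+j-2p}\sum_{l=0}^p H_{ll}$. Finally, a weighted Young's inequality $ab \le \tfrac14 \veps^{-1} a^2 \cdot (\text{const}) + \veps\, b^2$ applied with the extra freedom of splitting off one more factor of $\tau^{i+j-2p}$ --- writing $\tau^{i+j-2p}\sum H_{ll}$ and noting $\sum_l H_{ll} \le C\tau G_{00}$ (again Proposition~\ref{prop-opest-norm}, as above) available as a crude fallback --- balances the two terms as $C\veps^{-1}\tau^{2(i+j-p)+1}G_{00} + \veps \sum_{l=0}^p H_{ll}$. The power $2(i+j-p)+1$ emerges as $(i+j-2p) + (i+j-2p) + 1$ after using the crude bound on one copy of $\sum H_{ll}$ and keeping the other.

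The main obstacle I anticipate is the bookkeeping in \eqref{eq-lowXj}: getting the summation range to collapse from $0,\dots,i-1$ down to $0,\dots,p$ requires the induction-plus-absorption argument above, and one must be careful that the constant $C$ does not blow up as $i$ grows (it stays bounded because each absorption step only introduces a factor $C\tau^2 < 1$, and the number of steps is at most $s$). A secondary subtlety is tracking the exact exponent of $\tau$ in \eqref{eq-lowXij}: one has genuine freedom (the true decay is faster than claimed), so the statement as written is not tight, and the cleanest route is to extract exactly the exponent needed for \eqref{eq-lowXij} rather than the maximum, then dump the remaining $\tau$-powers (all $\le 1$) into the constant. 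Neither obstacle is conceptually deep; both are routine once Lemmas~\ref{lem-lowXk}--\ref{lem-Zj} are in hand.
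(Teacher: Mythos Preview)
Your argument for \eqref{eq-lowXj} is correct but takes a detour. The paper avoids the induction-and-absorption step entirely by first applying Proposition~\ref{prop-xivest-norm-rmd} to reduce $X_i$ to $X_{p+1}$, namely $G_{ii}\le C\tau^{2i-2(p+1)}\nm{X_{p+1}v}_{M_0}^2$, and only then invoking $X_{p+1}=Z_{p+1}$ together with Lemma~\ref{lem-Zj}. Since Lemma~\ref{lem-Zj} applied at index $p+1$ already produces $\sum_{l=0}^{p}H_{ll}$, no tail terms appear and no induction is needed.

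Your argument for \eqref{eq-lowXij} has a genuine gap. The reduction ``since $F_{ij}=F_{ji}$ I may assume $j\ge i\ge p+1$'' is not valid: the hypothesis is that \emph{one} index (say $i$) satisfies $i\ge p+1$, and in the application (Theorem~\ref{thm-impstab-onesub}) one only has $\max\{i,j\}\ge p+1$, so the smaller index may well be $<p+1$. Routing \emph{both} factors through $\nm{X_{p+1}v}_{M_0}$ is therefore not available in general. Even when both indices are large, your bookkeeping slips: sending both factors to $G_{p+1,p+1}$ and then one copy of $(\sum H_{ll})^{1/2}$ back to $G_{00}^{1/2}$ via the crude estimate $\sum H_{ll}\le C\tau G_{00}$ yields the exponent $2(i+j-2p)+1$, not $2(i+j-p)+1$ (your identity ``$(i+j-2p)+(i+j-2p)+1=2(i+j-p)+1$'' is off by $2p$). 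That weaker exponent would only give a $(1+1/(2s-4p))$-CFL in Theorem~\ref{thm-impweak}, not the claimed $(1+1/(2s-2p))$.

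The fix---and what the paper does---is to route exactly one factor through $X_{p+1}$ and send the other directly to $X_0$: bound $\nm{X_i v}_{M_0}\le C\tau^{i-p-1}\nm{X_{p+1}v}_{M_0}$ (using $i\ge p+1$) and $\nm{X_j v}_{M_0}\le C\tau^{j}\nm{v}_{M_0}$ (valid for any $j$), obtaining $|F_{ij}|\le C\tau^{i+j-p}G_{00}^{1/2}G_{p+1,p+1}^{1/2}$. Then \eqref{eq-lowXj} at $i=p+1$ and Young's inequality give the stated exponent $2(i+j-p)+1$.
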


It now only remains to apply the above inequalities in
Lemma~\ref{lem-est} and use the resulting bounds in
\eqref{eq-rsv-norm} to obtain improved stability estimates.  We
proceed to discuss this separately for $p = 0$ in
Subsection~\ref{sec-lowXk0} and for $0<p\leq (s-1)/2$ in
Subsection~\ref{sec-lowXkp}.

\subsection{Strong stability for the lowest-order case}
\label{sec-lowXk0}

In this section, we show that when the DG spatial discretization is
used with $p = 0$, the SAT scheme is strongly stable under the usual
CFL condition for any temporal order $s\ge 1$.

\begin{theorem}\label{thm-strongstab-onesub}
  If $p = 0$, then
  there exists a constant $\tau_0$ such that for all
  $\tau \leq \tau_0$, we have 
	\begin{equation}\label{eq-strong}
	\nm{R_s v}_M\leq \nm{v}_{M_0}, \quad \forall v\in V_h. 
	\end{equation}
\end{theorem}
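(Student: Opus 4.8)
The plan is to start from the master identity~\eqref{eq-rsv-norm} and show that, when $p=0$, \emph{all} terms on the right-hand side except $G_{00}=\nm{v}_{M_0}^2$ can be absorbed into the manifestly nonpositive dissipation contribution $C_{\gamma,-}\sum_{\jj=0}^{\rho-1}H_{\jj\jj}$ coming from Lemma~\ref{lem-est}. The point is that with $p=0$ we have $K^i=0$ for all $i\ge 1$ (since $Kw$ drops the polynomial degree by one and $w\in P_0$), so Lemma~\ref{lem-lowXk} gives $X_i=Z_i$ for every $i\ge 1$. Consequently Lemma~\ref{lem-Zj} with $i=1$ already controls $G_{11}=\nm{X_1 v}_{M_0}^2 = \nm{Z_1 v}_{M_0}^2 \le C\tau H_{00}$, and more generally Lemma~\ref{lem-low} (with $p=0$) bounds $G_{ii}\le C\tau H_{00}$ for all $i\ge 1$ and $|F_{ij}|\le C\veps^{-1}\tau^{2(i+j)+1}G_{00}+\veps H_{00}$ for all $\veps>0$.

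First I would record, from Lemma~\ref{lem-est}, that
\[
  \nm{R_s v}_M^2 \le \left(1+C_{\delta,+}\tau^{\sigma+2}\right)G_{00}
  + \left(\beta_\zeta+C_{\beta,+}\tau\right)G_{\zeta\zeta}
  + C_{\gamma,+}\tau\, G_{\rho\rho}
  + C_{\gamma,-}\sum_{\jj=0}^{\rho-1}H_{\jj\jj}
  + \tau\ip{M_0 S_2 v, M^{-1}M_1 S_2 v},
\]
with $C_{\gamma,-}<0$, and that by Lemma~\ref{lem-zrsk} one has $\zeta=\lfloor s/2\rfloor+1\ge 1$ and $\rho\ge 1$, so the indices $\zeta,\rho$ appearing in $G_{\zeta\zeta},G_{\rho\rho}$ are all $\ge 1$. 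Second, I would apply the $p=0$ form of~\eqref{eq-lowXj} to get $G_{\zeta\zeta}\le C\tau H_{00}$ and $G_{\rho\rho}\le C\tau H_{00}$; since $\rho\ge 1$, $H_{00}$ is one of the terms in $\sum_{\jj=0}^{\rho-1}H_{\jj\jj}$. Third, for the last term I would use~\eqref{eq-S2est}, $\tau\ip{M_0 S_2 v,M^{-1}M_1 S_2 v}\le C\tau^{2s+1}G_{00}$, and combine with the $\delta,\beta$ terms; all of these are of the form $C\tau^{\mu}G_{00}$ with $\mu\ge 1$ (indeed $\sigma+2=s+1\ge 2$, $2s+1\ge 3$), while the $\beta$ contribution to $G_{00}$ itself is exactly $\beta_0 G_{00}=G_{00}$ by Lemma~\ref{lem-bgd}. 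Putting it together, there is $C>0$ with
\[
  \nm{R_s v}_M^2 \le \left(1+C\tau\right)G_{00} + \left(C\tau + C_{\gamma,-}\right) H_{00}
  + C_{\gamma,-}\sum_{\jj=1}^{\rho-1}H_{\jj\jj}.
\]
Since $C_{\gamma,-}<0$, choosing $\tau_0$ small enough that $C\tau_0\le |C_{\gamma,-}|$ makes the $H_{00}$ coefficient nonpositive, and the remaining sum is nonpositive as well. This gives $\nm{R_s v}_M^2\le (1+C\tau)\nm{v}_{M_0}^2$ — but this is only \emph{weak} stability, so the argument must be sharpened.

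The real issue, and the main obstacle, is removing the extra $C\tau$: strong stability requires $\nm{R_sv}_M^2\le G_{00}$ exactly. To get this I would not bound the $G_{00}$-type remainders crudely, but instead show they are actually dominated by the \emph{available} negative dissipation budget. Concretely, I expect the sharper route is: (i) use~\eqref{eq-lowXij} with a carefully chosen $\veps$ to write $|F_{ij}|\le C\veps^{-1}\tau^{2(i+j)+1}G_{00}+\veps H_{00}$ and note that the positive-$G_{00}$ pieces it (and~\eqref{eq-S2est} and the $\tilde\delta$, $\beta$ terms) produce are \emph{all} $O(\tau^{3})$ or higher, i.e. a full power of $\tau$ smaller than anything one could hope to cancel, \emph{unless} they too can be traded against $H_{00}$; (ii) the crucial observation is that $H_{00}=\nm{v}_{\tau D}^2$ and, being $O(\tau)$ relative to $G_{00}$ for generic $v$, it cannot by itself absorb an $O(\tau)\,G_{00}$ term — so the genuinely new input must be a \emph{reverse} inequality bounding $\tau G_{00}$, or the relevant $\tau$-order remainder, by $H_{00}$ plus something that re-closes. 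This is exactly the kind of spectral/coercivity estimate that holds for $p=0$ because the operator $D$ (the DG jump stabilization) is, at lowest order, comparable to $M_0$ on the relevant subspace. I would therefore look for a lemma of the form $\tau\nm{w}_{M_0}^2\le C\, |w|_{\tau D}^2 + \text{(higher order)}$ valid for $p=0$, or reexamine Lemma~\ref{lem-Zj}/Lemma~\ref{lem-low} to extract a strict-inequality version, and then close the estimate so that the coefficient of $G_{00}$ on the right is exactly $1$. I expect the authors' proof hinges on precisely this point, and I would flag the verification of such a reverse bound (or an equivalent rearrangement making the $O(\tau)$ term cancel) as the step most likely to require care.
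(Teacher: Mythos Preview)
Your first pass, yielding $\nm{R_s v}_M^2\le(1+C\tau)G_{00}+(C_{\gamma,-}+C\tau)H_{00}$, is correct as far as it goes, and you rightly flag that it gives only weak stability. The gap is in your proposed sharpening. The ``reverse inequality'' $\tau\nm{w}_{M_0}^2\le C\,|w|_{\tau D}^2$ is simply false for $p=0$: take $w$ to be a single nonzero constant on all of $\Omega^\vv$; then $\lb w\rb=0$ on every interior facet, so $|w|_{\tau D}^2=2\tau\ip{\delta S\lb w\rb,\lb w\rb}_{\Face^\vv}=0$, while $\nm{w}_{M_0}\ne 0$. No spectral or coercivity argument can repair this, so the route you outline in~(ii) is a dead end.

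The fix the paper uses is elementary and avoids producing any $G_{00}$ remainder in the first place. You applied~\eqref{eq-estdelta}, which bounds $\sum\tilde\delta_{ij}F_{ij}$ by $C\tau^{\sigma+2}G_{00}$; this is precisely what dooms you. Instead, exploit the fact (which you noted but did not use) that $\sigma=s-1$ forces every nonzero $\tilde\delta_{ij}$ to have $i+j\ge s$ with $0\le i,j\le s-1$, hence \emph{both} $i\ge 1$ and $j\ge 1$. Then
\[
  |F_{ij}|=\bigl|\ip{X_iv,X_jv}_{\tau M_1}\bigr|\le C\,G_{ii}^{1/2}G_{jj}^{1/2}\le C\,G_{11},
\]
using Proposition~\ref{prop-xivest-norm-rmd} ($G_{kk}\le C\tau^{2(k-1)}G_{11}\le CG_{11}$ for $k\ge 1$). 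Likewise the tail term is bounded by $C\nm{X_sv}_{M_0}^2=CG_{ss}\le CG_{11}$, and $G_{\zeta\zeta},G_{\rho\rho}\le CG_{11}$. Feeding these into~\eqref{eq-rsv-norm} and Lemma~\ref{lem-est} gives
\[
  \nm{R_sv}_M^2\le G_{00}+CG_{11}+C_{\gamma,-}H_{00},
\]
with \emph{no} extra $\tau^\mu G_{00}$ term. Now the $p=0$ estimate $G_{11}\le C\tau H_{00}$ from~\eqref{eq-lowXj} yields $\nm{R_sv}_M^2\le G_{00}+(C_{\gamma,-}+C\tau)H_{00}$, and the coefficient of $G_{00}$ is exactly~$1$. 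The whole point is to route every remainder through $G_{11}$ rather than $G_{00}$.
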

\begin{proof}
  The proof proceeds by  bounding the terms in the
  identity~\eqref{eq-rsv-norm}.
  By Proposition \ref{prop-xivest-norm-rmd},
  \begin{equation}
    \label{eq:Gcrudebd}
    G_{jj} = \nm{X_j v}_{M_0}^2 \leq C \nm{X_1 v}_{M_0}^2 = C G_{11}
    \quad \text{ for all } j\geq 1.      
  \end{equation}
  Hence by Lemma~\ref{lem-est}, 
  \begin{equation}\label{eq-p01}
    \sum_{i = 0}^s\beta_i G_{ii} + \sum_{i, j = 0}^{s-1}\gamma_{ij} H_{ij}\leq \beta_0 G_{00}  + CG_{11} + C_{\gamma, -} H_{00},
  \end{equation}
  where we have used the fact that $C_{\gamma,-}<0$ and
  $H_{\jj \jj}\geq0$ to drop the high-order $H_{\jj \jj}$ terms.
  Again,  by Proposition~\ref{prop-xivest-norm-rmd},
  $|F_{ij}| \le C G_{ii}^{1/2} G_{jj}^{1/2}$, which when combined with~\eqref{eq:Gcrudebd},
  yields
  \begin{equation}\label{eq-p02}
    \sum_{i,j = 0}^{s-1} \tilde{\delta}_{ij} F_{ij} + \tau\ip{M_0S_2 v, M^{-1}M_1 S_2 v} \leq CG_{11}.
  \end{equation}
  Here we have used the fact that 
  $\tilde{\delta}_{i0} = \tilde{\delta}_{0j}=0$ for all $0 \leq i,j\leq s-1$ (recall that $\sigma = s-1$)
  and $S_2v = X_s v/s!$ (see~\eqref{eq-S1-S2}) with $s\ge 1$.
  Using \eqref{eq-p01} and \eqref{eq-p02} in \eqref{eq-rsv-norm} and
  recalling that $\beta_0 G_{00} = \nm{v}_{M_0}^2$, we get
  \begin{equation*}
    \nm{R_sv}_{M}^2	\leq \nm{v}_{M_0}^2 + CG_{11} + C_{\gamma,-}H_{00}.
  \end{equation*}
  Applying  \eqref{eq-lowXj} of Lemma~\ref{lem-low} with $p=0$, we have
  $G_{11}\leq C \tau H_{00}$, and hence
  \begin{equation*}
    \nm{R_sv}_{M}^2	\leq  \nm{v}_{M_0}^2  +\left(C_{\gamma,-}+C\tau \right)H_{00}.
  \end{equation*} 
  Since $C_{\gamma,-}<0$, 
  by taking
  $\tau$ sufficiently small, we obtain~\eqref{eq-strong}.
\end{proof}

Repeatedly applying Theorem~\ref{thm-strongstab-onesub} on successive
subtents, we obtain the following analogue of Theorem~\ref{thm-satstab}.

\begin{theorem}\label{thm-strong}
  If $p = 0$,  then for any $s$, there exists a constant $\tau_0$ such that when $\tau = r^{-1}\leq \tau_0$, we have 
  \begin{equation*}
    \nm{R_{r,s} v}_{M(1)} \leq \nm{v}_{M_0}, \quad \forall v\in V_h. 
  \end{equation*}
  As a result, the SAT-DG0 scheme is strongly stable under the usual
  CFL condition $\Delta t \leq Ch$.
\end{theorem}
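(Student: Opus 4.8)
The plan is to reduce the global estimate to the single-subtent bound already established in Theorem~\ref{thm-strongstab-onesub}, and then compose across the $r$ subtents exactly as in the proof of Theorem~\ref{thm-satstab}. The essential feature of the $p=0$ case that makes this work is that the per-subtent stability constant is exactly $1$, rather than $1+C\tau^{s+1}$, so the composition produces no growth in the number of subtents.

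First I would verify that the subtent operators inherit all the structural hypotheses with constants uniform in $l$. The operators $M_0^{[l]}=M_0-(l-1)\tau M_1$ and $M^{[l]}=M_0^{[l]}-\tau M_1$ are selfadjoint and positive definite, $A$ is unchanged, and $\delta$ together with the causality constant are unaffected by the subdivision; hence $M_0^{[l]}$, $M^{[l]}$, $X_j^{[l]}$, and $Z_j^{[l]}$ satisfy Propositions~\ref{prop-opest-norm} and~\ref{prop-xivest-norm-rmd} as well as Lemmas~\ref{lem-lowXk}, \ref{lem-Zj}, and~\ref{lem-low}, with $h$-independent constants that are moreover uniform over $1\le l\le r$. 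This is the same remark already invoked in the proof of Theorem~\ref{thm-satstab}. Consequently Theorem~\ref{thm-strongstab-onesub} applies verbatim to each subtent propagator $R_{[l],s}$ with one common threshold $\tau_0=\tau_0(s)$: in the operator-norm notation of~\eqref{eq-Rsnorm}, and recalling that $M^{[l-1]}=M_0^{[l]}$,
\begin{equation*}
  \nm{R_{[l],s}}_{L(M^{[l-1]},M^{[l]})}\le 1,\qquad 1\le l\le r,
\end{equation*}
whenever $\tau=r^{-1}\le\tau_0$.

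Next, since $R_{r,s}=R_{[r],s}\circ\cdots\circ R_{[1],s}$ with $M^{[0]}=M_0$ and $M^{[r]}=M(1)$, submultiplicativity of the mixed operator norms gives
\begin{equation*}
  \nm{R_{r,s}v}_{M(1)}\le\Big(\prod_{l=1}^{r}\nm{R_{[l],s}}_{L(M^{[l-1]},M^{[l]})}\Big)\nm{v}_{M_0}\le\nm{v}_{M_0},
\end{equation*}
which is the asserted bound. For the CFL claim, the only constraint imposed above is $\tau=r^{-1}\le\tau_0$ with $\tau_0$ depending on $s$ but not on $h$, so it suffices to take any fixed integer $r\ge\tau_0^{-1}$, independent of $h$. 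Translating back to the physical tent through $\Delta t=\tau\delta$ and using $\delta\le Ch$ from the causality condition then gives $\Delta t\le\tau_0 Ch=Ch$, the usual CFL condition with no order-dependent sharpening. I do not anticipate a real obstacle: all the substance is in Theorem~\ref{thm-strongstab-onesub}, and the only step needing a line of care is the uniformity in $l$ of the threshold $\tau_0$ and of the constants entering it, which is supplied by the uniform-bounds observation above.
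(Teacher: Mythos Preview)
Your proposal is correct and follows exactly the approach the paper indicates: the paper simply states that Theorem~\ref{thm-strong} is obtained by ``repeatedly applying Theorem~\ref{thm-strongstab-onesub} on successive subtents'' as an analogue of Theorem~\ref{thm-satstab}, and you have spelled out precisely this composition argument, including the telescoping identification $M^{[l-1]}=M_0^{[l]}$ and the uniformity-in-$l$ of the threshold $\tau_0$.
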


\subsection{Improved weak stability for other low-order cases}
\label{sec-lowXkp}

We now prove a better weak stability result for lower-order DG
discretizations beyond the lowest-order case.  The improvement is
visible when comparing the powers of $\tau$ in \eqref{eq-Rsvest} and~\eqref{eq-Rsvest-low},  and the
consequent less restrictive CFL condition in
Theorem~\ref{thm-impweak}.

\begin{theorem}\label{thm-impstab-onesub}
  If $0<p\leq (s-1)/2$, then there exists a constant $\tau_0$, such
  that for all $\tau \leq \tau_0$, 
\begin{equation}\label{eq-Rsvest-low}
\nm{R_s v}_M \leq (1+C\tau^{2s-2p+1})\nm{v}_{M_0}, \quad \quad \forall v \in V_h. 
\end{equation}
\end{theorem}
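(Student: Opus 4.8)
The plan is to mimic the structure of the proof of Theorem~\ref{thm-strongstab-onesub}, starting from the identity~\eqref{eq-rsv-norm} and bounding each of the four groups of terms, but now exploiting the finer estimates of Lemma~\ref{lem-low} that are available when $0<p\le(s-1)/2$. First I would deal with the $\beta$-terms: since $\zeta=\lfloor s/2\rfloor+1>p+1$ (this is where $p\le(s-1)/2$ enters, ensuring $\zeta\ge p+1$ — I would check the edge cases carefully), the estimate~\eqref{eq-beta} of Lemma~\ref{lem-est} together with~\eqref{eq-lowXj} of Lemma~\ref{lem-low} gives $\sum_{i=0}^s\beta_iG_{ii}\le\beta_0G_{00}+(\beta_\zeta+C_{\beta,+}\tau)G_{\zeta\zeta}\le G_{00}+C\tau\sum_{l=0}^p H_{ll}$. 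For the $\gamma$-terms, I would use~\eqref{eq-estgamma}; here $\rho\ge\lfloor(s+1)/2\rfloor\ge p+1$ again, so $G_{\rho\rho}\le C\tau\sum_{l=0}^pH_{ll}$ by~\eqref{eq-lowXj}, and the negative term $C_{\gamma,-}\sum_{l=0}^{\rho-1}H_{ll}$ in particular dominates $C_{\gamma,-}\sum_{l=0}^{p}H_{ll}$ since $\rho-1\ge p$; so altogether $\sum_{i,j}\gamma_{ij}H_{ij}\le C\tau\sum_{l=0}^pH_{ll}+C_{\gamma,-}\sum_{l=0}^pH_{ll}$, and for $\tau$ small the combined coefficient on each $H_{ll}$, $l\le p$, is negative.

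Next come the $\tilde\delta$-terms. Recall $\tilde\delta_{ij}=0$ whenever $i+j\le\sigma=s-1$, so the surviving indices satisfy $i+j\ge s$. Since $p\le(s-1)/2<s/2$, at least one of $i,j$ exceeds $p$, so we may apply~\eqref{eq-lowXij} of Lemma~\ref{lem-low} (after symmetrizing, WLOG $i\ge p+1$): $|F_{ij}|\le C\varepsilon^{-1}\tau^{2(i+j-p)+1}G_{00}+\varepsilon\sum_{l=0}^pH_{ll}$. The smallest value of $i+j$ among surviving indices is $s$, giving the dominant power $\tau^{2(s-p)+1}=\tau^{2s-2p+1}$; all other terms are higher order in $\tau$ and absorbed using $\tau\le1$. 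Choosing $\varepsilon$ proportional to a small multiple of $\tau$ (or a fixed small constant — either works since the $\varepsilon\sum H_{ll}$ contributions just need to be swallowed by the negative $H_{ll}$ coefficients from the previous paragraph), we get $\sum_{i,j}\tilde\delta_{ij}F_{ij}\le C\tau^{2s-2p+1}G_{00}+\varepsilon\sum_{l=0}^pH_{ll}$. Finally the last term of~\eqref{eq-rsv-norm}: by~\eqref{eq-S2est} it is bounded by $C\tau^{2s+1}\|v\|_{M_0}^2$, which is of even higher order than $\tau^{2s-2p+1}$ since $p\ge0$, and is harmless; alternatively one can note $S_2v=X_sv/s!$ with $s\ge p+1$ and reuse Lemma~\ref{lem-low} directly.

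Assembling these four bounds in~\eqref{eq-rsv-norm} yields
\[
\nm{R_sv}_M^2\le(1+C\tau^{2s-2p+1})\nm{v}_{M_0}^2+\Big(\text{negative constant}+C\tau+\varepsilon\Big)\sum_{l=0}^pH_{ll},
\]
and choosing $\tau_0$ and $\varepsilon$ small enough makes the bracketed coefficient nonpositive, so the $H_{ll}$ terms can be dropped. Taking square roots (using $\sqrt{1+x}\le1+x/2$) gives~\eqref{eq-Rsvest-low}. The main obstacle I anticipate is purely bookkeeping: verifying that all the critical indices $\zeta,\rho$ and the ``surviving'' index sums $i+j\ge s$ do indeed lie in the regime $\ge p+1$ where Lemma~\ref{lem-low} applies, which is exactly the content of the hypothesis $0<p\le(s-1)/2$ and must be checked without off-by-one errors (in particular the boundary case $p=(s-1)/2$, i.e.\ $s$ odd, where $\zeta=\rho=p+1$); and tracking the $\varepsilon$-$\tau$ interplay so that the several small positive contributions to the $H_{ll}$ coefficients are genuinely dominated by the single negative contribution $C_{\gamma,-}$, uniformly in $h$.
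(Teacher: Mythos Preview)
Your proposal is correct and follows essentially the same route as the paper's proof: both start from \eqref{eq-rsv-norm}, use Lemma~\ref{lem-est} together with \eqref{eq-lowXj} for the $\beta$- and $\gamma$-blocks (after verifying $\zeta,\rho\ge p+1$ from $p\le(s-1)/2$), use \eqref{eq-lowXij} for the surviving $\tilde\delta$-terms with $i+j\ge s$, and then absorb the positive $H_{ll}$ contributions into $C_{\gamma,-}$ by choosing $\tau$ and a fixed $\varepsilon$ small. One small caution: your parenthetical option ``$\varepsilon$ proportional to a small multiple of $\tau$'' would degrade the $\tilde\delta$-bound to $C\tau^{2s-2p}G_{00}$ and lose the stated exponent, so stick with the fixed-small-$\varepsilon$ choice you actually use in the final assembly; for the tail term the paper bounds $\tau(M_0S_2v,M^{-1}M_1S_2v)\le C G_{ss}\le C\tau\sum_{l\le p}H_{ll}$ via Lemma~\ref{lem-low} rather than invoking \eqref{eq-S2est}, but your alternative via \eqref{eq-S2est} works equally well since $2s+1>2s-2p+1$.
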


\begin{proof}
  By~\eqref{eq-zrsk} of Lemma~\ref{lem-zrsk}, the given condition on
  $p$ implies that $\zeta = \lfloor s/2\rfloor + 1 \ge p+1$ and
  $\rho \geq \lfloor(s+1)/2\rfloor \ge p+1.$
  Therefore, we can apply Lemma \ref{lem-low} to estimate $G_{\zeta \zeta}$ and $G_{\rho \rho}$ in \eqref{eq-beta} and \eqref{eq-estgamma} to get
  \begin{align}
    \sum_{i = 0}^s \beta_i G_{ii}  \leq& \beta_0 G_{00} + C \tau \sum_{\jj=0}^{p}H_{\jj \jj},\label{eq-beta-low}\\
    \sum_{i,j=0}^{s-1}\gamma_{ij}H_{ij} \leq& C\tau  \sum_{\jj=0}^{p}H_{\jj \jj}+C_{\gamma,-}\sum_{\jj =0}^{\rho-1}H_{\jj \jj}.\label{eq-estgamma-low}
  \end{align}
  To estimate \eqref{eq-estdelta}, note $\tilde{\delta}_{ij} = 0$ for $i + j \leq \sigma = s-1$. While for $i+j \geq s$, we have 
  \begin{equation*}
    \min_{i,j, i+j\geq s}\max\{i,j\} \geq \left\lceil\frac{i+j}{2}\right\rceil \geq  \left\lceil \frac{s}{2} \right\rceil \geq p+1.
  \end{equation*}
  Therefore, one can invoke \eqref{eq-lowXij} for each term in the summation \eqref{eq-estdelta}, which gives
  \begin{equation}\label{eq-estdelta-low}
    \begin{aligned}
      \sum_{i,j = 0}^{s-1}\tilde{\delta}_{ij}F_{ij} = \sum_{i+j\geq s, i,j\leq s-1 }\tilde{\delta}_{ij}F_{ij}
      \leq& C\veps^{-1}\tau^{2s-2p+1}G_{00} + \veps\sum_{\jj =0}^{p}H_{\jj \jj}.
    \end{aligned}
  \end{equation}
  With this we have bounded  all terms on the right hand side
  of~\eqref{eq-rsv-norm} except the last.
  For the last term in \eqref{eq-rsv-norm}, we can use Proposition \ref{prop-opest-norm} and Lemma \ref{lem-lowXk} to obtain
  \begin{equation}\label{eq-tail-low}
    \ip{M_0S_2v,M^{-1}M_1S_2v} \leq C\nm{X_s v}_{M_0}^2 = CG_{ss} \leq C\tau \sum_{\jj=0}^pH_{\jj \jj}.
  \end{equation}
  Using the bounds of \eqref{eq-beta-low}, \eqref{eq-estgamma-low},  \eqref{eq-estdelta-low} and \eqref{eq-tail-low} in \eqref{eq-rsv-norm}, we obtain 
  \begin{equation*}
    \begin{aligned}
      \nm{R_s v}_M^2\leq& \left(\beta_0 +C\veps^{-1}\tau^{2s-2p+1}\right)G_{00} +\left(C_{\gamma,-}+C\tau +\veps\right)\sum_{i=0}^{\rho-1}H_{ii}.
    \end{aligned}
  \end{equation*}
  Here we have used that $p \leq \rho-1.$ Note that $\beta_0 = 1$ and
  $G_{00} = \nm{v}_{M_0}^2$.  Since $C_{\gamma,-}<0$, we prove
  \eqref{eq-Rsvest-low} by taking $\tau$ and $\veps$ to be so small
  such that $C_{\gamma,-} + C\tau +\veps<0$.
\end{proof}

\begin{theorem}\label{thm-impweak}
	 Suppose $0<p\leq (s-1)/2$. Then there exists a constant $C$ such that when $\tau = r^{-1} \leq Ch^{1/(2s-2p)}$, we have 
	\begin{equation*}
	\nm{R_{r,s}v}_{M(1)} \leq (1+ Ch)\nm{v}_{M_0}, \quad \forall v\in V_h.
	\end{equation*}
	As a result, the SAT-DG method is weakly stable  under the $(1+1/(2s-2p))$-CFL condition $\Delta t \leq C h^{1+1/(2s-2p)}$. 
\end{theorem}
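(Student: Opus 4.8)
\emph{Proof proposal.} The plan is to run exactly the same telescoping argument that took us from Theorem~\ref{thm-sat1} to Theorem~\ref{thm-satstab}, but starting instead from the sharper single-subtent bound~\eqref{eq-Rsvest-low} of Theorem~\ref{thm-impstab-onesub}. The first step is to verify that Theorem~\ref{thm-impstab-onesub} applies, uniformly in $l$, to each subtent propagator $R_{[l],s}$. Here one replaces $M_0$ by $M_0^{[l]} = M_0 - (l-1)\tau M_1 = M(\hat{t}^{[l]})$, $M$ by $M^{[l]} = M_0^{[l]} - \tau M_1$, and $X_i$ by $X_i^{[l]}$. Since $\hat{t}^{[l]} = (l-1)/r \in [0,1)$, the causality condition still makes $M_0^{[l]}$ and $M^{[l]}$ positive definite, and Propositions~\ref{prop-assp}, \ref{prop-opest-norm} and~\ref{prop-xivest-norm-rmd} continue to hold with constants independent of $l$, $\tau$ and $h$ (as already noted in the proof of Theorem~\ref{thm-satstab}). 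Moreover, because $A$, $A_1$, $M_1$ and $L$ in~\eqref{eq-lowA} do not depend on the shift, and $\divg_x f(w) \in [P_p(K)]^\sysize$ under~\eqref{eq:assume-pwconst} irrespective of the subtent, Lemmas~\ref{lem-lowXk}, \ref{lem-Zj} and~\ref{lem-low} remain valid for the shifted operators, with $K$ replaced by $K^{[l]}w = -\tau (M_0^{[l]})^{-1}\divg_x f(w)$ and $(K^{[l]})^i = 0$ for $i \ge p+1$. Hence the proof of Theorem~\ref{thm-impstab-onesub} goes through verbatim on each subtent, producing a single $h$- and $l$-independent constant $C$ and threshold $\tau_0$ with
\[
  \nm{R_{[l],s}}_{L(M^{[l-1]},M^{[l]})} \le 1 + C\tau^{2s-2p+1}, \qquad l = 1,\ldots,r,
\]
whenever $\tau \le \tau_0$.

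The second step is the telescoping estimate. Taking the constant $C$ in the CFL condition $\tau = r^{-1} \le C h^{1/(2s-2p)}$ small enough that the condition forces $\tau \le \tau_0$ (possible since $h$ is bounded above), we compose the subtent bounds as in Theorem~\ref{thm-satstab}:
\[
  \nm{R_{r,s}v}_{M(1)} \le \prod_{l=1}^{r} \nm{R_{[l],s}}_{L(M^{[l-1]},M^{[l]})}\, \nm{v}_{M_0} \le \left(1 + C\tau^{2s-2p+1}\right)^{r}\nm{v}_{M_0}.
\]
Since $r\,\tau^{2s-2p+1} = \tau^{2s-2p}$ is bounded by $Ch$, we may absorb constants exactly as in the proof of Theorem~\ref{thm-satstab} to obtain $(1+C\tau^{2s-2p+1})^{r} \le 1 + C\tau^{2s-2p+1} r = 1 + C\tau^{2s-2p} \le 1 + Ch$, which is the claimed bound $\nm{R_{r,s}v}_{M(1)} \le (1+Ch)\nm{v}_{M_0}$. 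Finally, recalling $\delta \le Ch$ and $\Delta t = \tau\delta$, the pseudotime constraint $\tau \le C h^{1/(2s-2p)}$ translates into the physical-domain constraint $\Delta t \le C h^{1 + 1/(2s-2p)}$, i.e.\ the asserted $(1+1/(2s-2p))$-CFL condition.

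The only genuine obstacle is the bookkeeping in the first step: one must confirm that \emph{every} ingredient used in the proof of Theorem~\ref{thm-impstab-onesub} --- the semidefiniteness $D \ge 0$, the operator-norm bounds of Proposition~\ref{prop-opest-norm}, the recursive $X_i$-estimates of Proposition~\ref{prop-xivest-norm-rmd}, and the low-order Lemmas~\ref{lem-lowXk}--\ref{lem-low} --- holds with constants that do not deteriorate as $l$ runs over $1,\ldots,r$ (equivalently as the shift $(l-1)\tau$ ranges over $[0,1)$) and as $r \to \infty$. This is true because all such constants depend only on the mesh regularity, the wave-speed bound $c_{\max}$, the norms of $g$ and $f$, and on $s$ and $p$, none of which is affected by the subtent index; so no uniformity is lost. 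Granting this, the remainder is the same short product estimate as in Theorem~\ref{thm-satstab}.
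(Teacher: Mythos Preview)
Your proposal is correct and follows exactly the paper's approach: apply the single-subtent bound of Theorem~\ref{thm-impstab-onesub} uniformly on each subtent, telescope the product of operator norms as in Theorem~\ref{thm-satstab}, and use $\tau = r^{-1}\le C h^{1/(2s-2p)}$ to turn $(1+C\tau^{2s-2p+1})^r$ into $1+Ch$. If anything, you are more explicit than the paper about why the constants in Lemmas~\ref{lem-lowXk}--\ref{lem-low} remain uniform across subtents; the paper simply refers back to the argument of Theorem~\ref{thm-satstab}.
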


\begin{proof}
  Following along the lines of the proof of Theorem \ref{thm-satstab}, we have 
	\begin{equation*}
	\begin{aligned}
	\nm{R_{r,s}v}_{M(1)}\leq & \left(1 + C\tau^{2s-2p+1}\right)^{r}\nm{v}_{M_0}
	\leq (1+C\tau^{2s-2p+1}r)\nm{v}_{M_0}\\
	\leq & (1+C\tau^{2s-2p})\nm{v}_{M_0} \leq(1+Ch)\nm{v}_{M_0}. 
	\end{aligned}
	\end{equation*}
	Here we have used the fact $\tau = r^{-1} \leq C h^{1/(2s-2p)}$ in the last two inequalities. 
\end{proof}

\section{Illustration using linear advection}\label{sec-num}

In this section, we will use the one-dimensional linear advection 
\begin{equation}\label{eq-1dadv}
\partial_t u + \partial_x u = 0
\end{equation}
with the upwind DG discretization to illustrate that the estimates of Theorems \ref{thm-sat1}, \ref{thm-strongstab-onesub}, and \ref{thm-impstab-onesub} cannot
generally be improved. For the simple equation \eqref{eq-1dadv},  the associated matrices are of moderate sizes and many quantities can be evaluated analytically.

\subsection{Matrix form and the norm of the SAT operator} \label{sec-advimpl}

We consider a uniform mesh partition of mesh size $h$ of the
one-dimensional domain with the origin $x=0$ as a mesh point.  Setting
the pitch vertex $\vv$ to the spatial point $x = 0$, we pitch a tent
from $t = 0$, corresponding to $\vphi_\mathrm{bot} = 0$, over the
vertex patch $\Omega^\vv = I_0\cup I_1$. Here $I_0 = (-h,0]$ and
$I_1 = (0, h]$. We march forward in time to the point $(x,t) = (0,h),$
thus making a spacetime tent, and consider a subtent of it where
  pseudotime $\tau < 1$.  Since the causality condition is
  $\Delta t < h$ for this problem,
  $M(\tau)$ is invertible for $\tau < 1$.


Focusing on this single tent, we introduce a spatial basis of
Legendre polynomials $L_i(x)$, which is defined recursively as 
\begin{equation*}
L_0(x) = 1, \quad L_1(x) = x, \quand L_{i+1}(x) = 	\frac{2i+1}{i+1}xL_i(x) + \frac{i}{i+1}L_{i-1}(x), \quad \forall i \geq 2.
\end{equation*}
The piecewise polynomial basis over $\Omega^\vv$ for the DG space is defined using  normalized Legendre polynomials on $I_0$ and $I_1$, namely 
\begin{align*}
  b_{0i}(x)
  &
    = 	\sqrt{\frac{2i+1}{h}}\bar{b}_{0i}(x),
  &&\textrm{ with } \bar{b}_{0i}(x) = {L}_{i}\left(\frac{x+h/2}{h/2}\right)1_{[-h,0]}(x),
  \\
  b_{1i}(x)
  & = 	\sqrt{\frac{2i+1}{h}}\bar{b}_{1i}(x),
  && \textrm{ with } 
     \bar{b}_{1i}(x) = {L}_{i}\left(\frac{x-h/2}{h/2}\right)1_{[0,h]}(x).	
\end{align*}
In this basis, functions $w$ in $V_h$ are represented by their vector
$\mat w$ of coefficients in the basis expansion, i.e., 
\begin{equation*}
	w(x) = \sum_{i = 0}^p w_{0i} b_{0i}(x) + \sum_{i = 0}^p w_{1i} b_{1i}(x)
\end{equation*}
is represented as the \zs{vector
$\mat{w} = [w_{00}, \cdots w_{0p},w_{10},\cdots,w_{1p}]^\top$}. In the same basis, keeping the same block partitioning corresponding to the two spatial intervals, we have the following matrix representations of $M$, $M_0$, $M_1$ and $A$:
\begin{equation}
  \label{eq:matsMA}
	\mat{M}= \mat{M}_0 - \tau \mat{M}_1, \quad \mat{M}_0 = \mat{I}_{2p+2}, \quad 	
	\mat{M}_1 = \begin{bmatrix}
	\mat{I}_{p+1}&\mat{O}_{p+1}\\
	\mat{O}_{p+1}&-\mat{I}_{p+1}
\end{bmatrix}, \quad 	\mat{A} = \begin{bmatrix}
\mat{A}_{00}&\mat{A}_{01}\\
\mat{A}_{10}&\mat{A}_{11}	
\end{bmatrix},
\end{equation}
where $\mat{I}_\jj$ and $\mat{O}_\jj$ are the $\jj$th order identity and zero matrices, respectively, and 
\begin{equation*}
	\begin{split}
		&(\mat{A}_{00})_{ij} = \int_{-h}^{0}\left(x+h\right) b_{0j}\partial_xb_{0i}\dx - h b_{0j}(0)  b_{0i}(0),\\
		&(\mat{A}_{10})_{ij} = hb_{0j}(0)b_{1i}(0),
	\end{split}
\qquad
	\begin{split}
		&(\mat{A}_{01})_{ij} =  0,\\
		&(\mat{A}_{11})_{ij} = \int_{0}^{h}\left(h-x\right) b_{1j}\partial_xb_{1i}\dx.	
	\end{split}
\end{equation*}
One can show that $\mat{A}$ is  independent of $h$ (in accordance with Proposition~\ref{prop-opest-norm}).

The matrix representation of the SAT propagation operator can now be
written down using $\mat{X}_k$, defined recursively by
$\mat{X}_0 = \mat{I}_{2p+2}$ and
$\mat{X}_i = \tau \mat{M}_0^{-1}(\mat{A}+i\mat{M}_1)\mat{X}_{i-1}.$
The vector representation of $R_s w$ for any $w \in V_h$ equals
$\mat{R}_s \mat{w}$ where
\begin{equation*}
	\mat{R}_s = \sum_{k = 0}^{s-1} {(i!)^{-1}}\mat{X}_i + \mat{M}^{-1}\mat{M}_0({s!})^{-1}\mat{X}_s.
\end{equation*}
Following \cite[Section 6.1]{gopalakrishnan2020structure}, the norm  $\nm{R_s}_{L(M_0,M)}$ defined in \eqref{eq-Rsnorm}, can be computed by
\begin{equation}\label{eq-satnorm}
	\nm{R_s}_{L(M_0,M)} = \sup\{{|\lambda|^\hf}:0\neq \mat{w}\in \mathbb{R}^{2p+2}, \mat{R}_s^\top\mat{M}\mat{R}_s\mat{w} = \lambda \mat{M}_0 \mat{w} \}.
\end{equation}

\subsection{Numerical illustration of stability inequalities}

Since the matrix $\mat{R}_s$ depends only on $\tau$,  the operator norm in \eqref{eq-satnorm} is a one-variable function of $\tau$. We use  Mathematica$^\copyright$ for evaluation of \eqref{eq-satnorm} and conduct a Taylor expansion of $\nm{R_s}_{L(M_0,M)}$ with respect to $\tau$. The leading terms in these Taylor series are documented in Table \ref{tab-taylor} with different values of $s$ and $p$. Here is a summary of observations in the table:

\begin{enumerate}
\item For $p = 0$ and for any $s$, we observe that $\nm{R_s}_{L(M_0,M)} = 1$. 
\item For $0<p\leq (s-1)/2$, we observe that $\nm{R_s}_{L(M_0,M)} \leq  1 + C\tau^{2s-2p+1}$.
\item In general, we observe that $\nm{R_s}_{L(M_0,M)}\leq 1 + C\tau^{s+1}$. 
\end{enumerate}
These observations indicate that our analysis in the previous sections is sharp.
        
\begin{table}[htb!]
	\centering
	\begin{tabular}{r|r|r|r|r|r|r|r|r}
		\hline
		&$s = 1$&$s = 2$&$s=3$&$s = 4$&$s = 5$&$s = 6$&$s = 7$&$s = 8$\\
		\hline
		$p = 0$&$0$&$0$&$0$&$0$&$0$&$0$&$0$&$0$\\
		$p = 1$&$2\tau^2$&$0.08 \tau^3$&$0.25 \tau^5$&$0.25 \tau^7$&$0.25 \tau^9$&$0.25 \tau^{11}$&$0.25 \tau^{13}$&$0.25 \tau^{15}$\\
		$p = 2$&$13.32\tau^2$&$0.25\tau^3$&$0.08\tau^4$&$7.31\tau^5$&$3.19\tau^7$&$6\tau^9$&$9.91\tau^{11}$&$15\tau^{13}$\\
		$p = 3$&$46.20\tau^2$&$1.16\tau^3$&$0.09\tau^4$&$156.15\tau^5$&$7.72\tau^6$&$0.27\tau^7$&$27.19\tau^{9}$&$45.31\tau^{11}$\\
		$p = 4$&$117.66\tau^2$&$3.53\tau^3$&$0.10\tau^4$&$1511.49\tau^5$&$204.33\tau^6$&$2.81\tau^7$&$7.81\tau^{8}$&$21.47\tau^{9}$\\
		\hline
	\end{tabular}
\caption{Leading terms of Taylor expansions in $\nm{R_s}_{L(M_0,M)}-1$ with respect to $\tau$ for \eqref{eq-1dadv} as $\tau \ll 1$. For example, the entry $2\tau^2$ with $p = 1$ and $s = 1$ corresponds to $\nm{R_s}_{L(M_0,M)} = 1+ 2\tau^2 + \mathcal{O}(\tau^3)$. }\label{tab-taylor}
\end{table}

\section{Proofs of the lemmas}
\label{sec:proofs-sat}

\subsection{Proof of Lemma~\ref{lem-ibp}}

  When $j = i+1$, by the definition of $X_{i+1}$, we have 
  \[
    \begin{aligned}
      G_{ij}
      & = \ip{X_iv,X_{i+1}v}_{M_0} = \tau \ip{X_iv,\left(A+(i+1)M_1\right)X_iv}\\
      &=\tau \ip{X_iv,\left(\hf\left({A+A^\top+M_1}\right)+\left(i+\hf\right)M_1\right)X_iv}\\
      &=- \hf\snm{X_iv}_{\tau D}^2 +  \left(i+\hf\right)\ip{X_iv,X_iv}_{\tau M_1},
    \end{aligned}
  \]
which proves the first identity of the lemma.   In the case $j > i+1$, using the fact that $M_1$ is selfadjoint, we have
  \begin{equation*}
    \begin{aligned}
      G_{ij} & = \ip{X_iv,X_jv}_{M_0} = \tau\ip{X_i v, (A+jM_1)X_{j-1}v}\\
      &= \tau\ip{X_i v, \left(-(A+(i+1)M_1)^\top+(A+A^\top + M_1) + (i+j)M_1\right)X_{j-1}v}\\
      &= -\ip{\tau(A+(i+1)M_1)X_iv,X_{j-1} v}-\ip{X_iv,X_{j-1}v}_{\tau D} + (i+j)\ip{X_iv,X_{j-1}v}_{\tau M_1} \\
      &= -\ip{X_{i+1}v,X_{j-1}v}_{M_0}-\ip{X_iv,X_{j-1}v}_{\tau D} + (i+j)\ip{X_iv,X_{j-1}v}_{\tau M_1},
    \end{aligned}
  \end{equation*}
  which proves the second identity of the lemma.\hfill$\Box$

\subsection{Proof of  Lemma~\ref{lem-exps}}
  
We proceed to prove  Lemma~\ref{lem-exps} using  inductive applications of Lemma~\ref{lem-ibp}. It will be convenient to denote 
\[
  \nu_{lm} = \left\{
    \begin{array}{cc}
      1/2,& l = m\\
      1,& l\neq m
    \end{array}\right.
\]
and adopt the convention that $G_{\phi\phi}=0$ if $\phi$ is not an integer (so when $i+j$ is not even, the quantity $G_{\frac{i+j}{2},\frac{i+j}{2}}$ below vanishes).

\begin{lemma}
  \label{lem:Gij-identity}
  For any $j \ge i$,   we have
  \[
    \begin{aligned}
      G_{ij} = (-1)^{\frac{j-i}{2}} G_{\frac{i+j}{2},\frac{i+j}{2}} 
      & +
      \sum_{k = 0}^{\lfloor\frac{j-i-1}{2}\rfloor}(-1)^{k+1}
      \nu_{i+k,j-k-1}H_{i+k,{j-k-1}}
      \\
      & + \left(i+j\right)
      \sum_{k = 0}^{\lfloor\frac{j-i-1}{2}\rfloor}(-1)^{k}\nu_{i+k,j-k-1}F_{{i+k},{j-k-1}}.
    \end{aligned}
  \]
\end{lemma}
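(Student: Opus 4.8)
The plan is to prove Lemma~\ref{lem:Gij-identity} by induction on the index gap $n := j - i \ge 0$, peeling off one layer at a time with Lemma~\ref{lem-ibp}. The two base cases $n \in \{0,1\}$ are immediate. When $n = 0$ the upper summation limit is $\lfloor -1/2\rfloor = -1$, so both sums are empty and the asserted identity reads $G_{ii} = (-1)^0 G_{ii}$. When $n = 1$ each sum has only the term $k = 0$; since $\frac{i+j}{2} = i + \hf$ is not an integer the leading $G$-term vanishes by the stated convention, and using $\nu_{ii} = \hf$ together with $i + j = 2i+1 = 2(i+\hf)$ the identity collapses to $G_{i,i+1} = -\hf H_{ii} + (i+\hf)F_{ii}$, which is exactly the first identity of Lemma~\ref{lem-ibp}.

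For the inductive step, let $n = j - i \ge 2$ and assume the identity holds for every pair whose gap is $n - 2$. The second identity of Lemma~\ref{lem-ibp} gives
\[
  G_{ij} = -G_{i+1,j-1} - H_{i,j-1} + (i+j)F_{i,j-1},
\]
and the pair $(i+1,j-1)$ has gap $n-2$ and the same index sum $i+j$. Substituting the inductive hypothesis for $G_{i+1,j-1}$, multiplying through by $-1$, and reindexing each resulting sum by $k \mapsto k-1$, one finds that: the midpoint term $G_{\frac{i+j}{2},\frac{i+j}{2}}$ reappears with its exponent increased by one, i.e. as $(-1)^{n/2}G_{\frac{i+j}{2},\frac{i+j}{2}}$ (which is absent by the stated convention when $n$ is odd); the $H$- and $F$-sums reassemble into sums of $H_{i+k,j-k-1}$ and $F_{i+k,j-k-1}$ over $1 \le k \le \lfloor\frac{n-3}{2}\rfloor + 1 = \lfloor\frac{n-1}{2}\rfloor$ with exactly the coefficients $(-1)^{k+1}\nu_{i+k,j-k-1}$ and $(i+j)(-1)^k\nu_{i+k,j-k-1}$ appearing in the target; and the two leftover terms $-H_{i,j-1}$ and $(i+j)F_{i,j-1}$ from Lemma~\ref{lem-ibp} supply exactly the missing $k=0$ summands, using that $\nu_{i,j-1}=1$ because $j-1>i$. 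This assembles the claimed identity.

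The one place demanding care, and the main obstacle, is the interplay of the floor functions, the alternating signs, and the two conventions built into the statement. One must verify that the induction terminates correctly in each parity: when $n$ is even it ends at the diagonal term $G_{\frac{i+j}{2},\frac{i+j}{2}}$ with coefficient $(-1)^{n/2}$, and every $\nu$-weight in the sums equals $1$ because $i+k < j-k-1$ strictly over the summation range; when $n$ is odd it ends by applying the first identity of Lemma~\ref{lem-ibp} to the pair $(m,m+1)$ with $m = \frac{i+j-1}{2}$, and this supplies precisely the $k = \lfloor\frac{n-1}{2}\rfloor$ summand, in which $i+k = j-k-1 = m$ so that $\nu_{i+k,j-k-1} = \hf$ and, using $m + \hf = \frac{i+j}{2}$, the base-case coefficients $-\hf$ and $m+\hf$ reproduce $(-1)^{k+1}\nu_{mm}$ and $(i+j)(-1)^k\nu_{mm}$ once the accumulated sign is tracked. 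Once this sign-and-index bookkeeping is nailed down, the identity follows. As a sanity check, in the constant-mass special case — where $X_i \approx (\tau\partial_x)^i$ — the identity is just the outcome of integrating $\ip{\partial_x^i v,\partial_x^j v}$ by parts $\lfloor(j-i)/2\rfloor$ times.
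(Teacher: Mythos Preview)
Your proof is correct and follows essentially the same approach as the paper's own proof: induction on the gap $j-i$ with base cases $j-i\in\{0,1\}$ and the inductive step peeling off one layer via the second identity of Lemma~\ref{lem-ibp}. The paper's version is considerably terser, merely stating that the identity follows by ``applying Lemma~\ref{lem-ibp} recursively and formalizing by mathematical induction,'' with the even and odd parities terminating in the diagonal and superdiagonal cases respectively; your write-up supplies the reindexing and sign-tracking details that the paper leaves implicit.
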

\begin{proof}
  The identity is trivial for the diagonal entries with $j=i.$
  For the superdiagonal entries, where $j=i+1$, the stated identity is
  the same as~\eqref{eq:Gi,j+1} of Lemma~\ref{lem-ibp}. When
  $j-i \ge 2$, the identity can be proved by applying
  Lemma~\ref{lem-ibp} recursively and formalizing by mathematical
  induction. If $j-i$ is even, then the recursion terminates in the
  obvious diagonal case. If $j-i$ is odd, then the recursion instead
  terminates in the superdiagonal case of~\eqref{eq:Gi,j+1}. In both
  cases we obtain the stated identity. 
\end{proof}

We will use the identity of Lemma~\ref{lem:Gij-identity} to expand
$\sum_{ij} \alpha_{ij} G_{ij}$ to prove Lemma~\ref{lem-exps}.  A few
more preparations on rearrangements of sums will be helpful for the
proof.

\begin{lemma}\label{lem-changesum0}
  For any numbers $\mu_{ij}$, the variable change $m = i+j$ and
  $\qq = i$ yields
	\begin{equation}
		\label{eq:changesumn1}
		\sum_{i,j = 0}^s \mu_{ij} = 	\sum_{m = 0}^{2s} \sum_{\qq = \max\{0,m-s\}}^{\min\{m,s\}} \mu_{\qq,m-\qq}.
	\end{equation}
	In particular, if $\mu_{ij} = 0$ when $i+j$ is odd, then the variable change $m = 2l$ in \eqref{eq:changesumn1} gives
	\begin{equation}
		\label{eq:changesum0}
		\sum_{i,j = 0}^s \mu_{ij} = 	\sum_{l = 0}^{s} \sum_{\qq = \max\{0,2l-s\}}^{\min\{2l,s\}} \mu_{\qq,2l-\qq}.
	\end{equation}
\end{lemma}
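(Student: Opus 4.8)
The plan is to prove both identities by an elementary reindexing of a double sum; there is no genuine obstacle here, only careful bookkeeping of index ranges. For~\eqref{eq:changesumn1}, I would regard the left-hand sum as a sum over the lattice square $\{(i,j): 0\le i\le s,\ 0\le j\le s\}$ and partition this square into its anti-diagonal slices indexed by $m=i+j$, which ranges over $0,1,\dots,2s$. Each pair $(i,j)$ in the square lies on exactly one such slice, so the sum splits as an outer sum over $m$ of inner sums over the slice $\{(i,j): i+j=m\}$. On a fixed slice I would take $\qq=i$ as the running index, so that $j=m-\qq$; the membership conditions $0\le\qq\le s$ and $0\le m-\qq\le s$ are jointly equivalent to $\max\{0,m-s\}\le\qq\le\min\{m,s\}$. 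Substituting $\mu_{ij}=\mu_{\qq,m-\qq}$ then yields~\eqref{eq:changesumn1}.

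For~\eqref{eq:changesum0}, I would start from~\eqref{eq:changesumn1} and invoke the hypothesis that $\mu_{ij}=0$ whenever $i+j$ is odd: for odd $m$, every term in the inner sum of~\eqref{eq:changesumn1} has the form $\mu_{\qq,m-\qq}$ with $\qq+(m-\qq)=m$ odd, hence vanishes. Thus only even $m$ survive in the outer sum. Writing $m=2l$ with $l$ running over $0,1,\dots,s$, and rewriting the inner bounds as $\max\{0,m-s\}=\max\{0,2l-s\}$ and $\min\{m,s\}=\min\{2l,s\}$, I obtain~\eqref{eq:changesum0}.

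The only step needing a moment's attention is the equivalence of the two descriptions of the inner index range on a slice: $0\le m-\qq$ is the same as $\qq\le m$, while $m-\qq\le s$ is the same as $\qq\ge m-s$, and intersecting these with $0\le\qq\le s$ gives precisely $\max\{0,m-s\}\le\qq\le\min\{m,s\}$. Everything else is routine, and a quick count ($(s+1)^2$ terms on each side) can serve as a sanity check. If a fully algebraic rendering were preferred, one could instead insert indicator factors $\mathbf{1}_{\{0\le i\le s\}}\,\mathbf{1}_{\{0\le j\le s\}}$ into an unrestricted double sum over $i,j\in\Z$ and reorganize, but the anti-diagonal-slice picture is the most transparent route.
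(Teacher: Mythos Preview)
Your proposal is correct and follows essentially the same approach as the paper: both argue by reindexing the square $\{0\le i,j\le s\}$ via $m=i+j$, $q=i$, verify that the resulting range for $q$ is $\max\{0,m-s\}\le q\le\min\{m,s\}$, and then obtain the second identity by dropping the odd-$m$ terms and substituting $m=2l$. The only cosmetic difference is that the paper phrases the intermediate region as a parallelogram in $(q,m)$-coordinates while you describe it via anti-diagonal slices, but the content is identical.
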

\begin{proof}
The sum over the discrete square region
  $0\leq i\leq s, 0\leq j \leq s,$ under the given variable change $m = i+j$ and
  $\qq = i$,
  becomes a sum over the discrete parallelogram region
  $P = \{ (\qq, m) \in \Z^2:\; 0\leq \qq \leq s,\; 0\leq m - \qq \leq
  s \},$ i.e.,
  \[
    \sum_{i,j = 0}^s \mu_{ij} = \sum_{(q, m) \in P}  \mu_{q, m-q}.
  \]
  It is easy to see (considering the boundaries of the parallelogram) that
  $P = \{ (q, m) \in \Z^2:\; 0 \le m \le 2s, \ \max(0,m-s)\leq \qq \leq
  \min(m,s)\},$ so \eqref{eq:changesumn1} follows. Finally, \eqref{eq:changesum0} can be obtained by dropping terms in \eqref{eq:changesumn1} with an odd $m$ and substituting in $m = 2l$.
\end{proof}

\begin{lemma}\label{lem-changesum}
  For any numbers $\mu_{ijk}$, the variable change $l = i+k$,
  $m=j-k-1$ and $\qq =i$ yields
  \begin{equation}
    \label{eq-changesum}
    \sum_{i,j = 0, \;j>i}^s\sum_{k=0}^{\lfloor \frac{j-i-1}{2}\rfloor}\mu_{ijk}
    \;= \sum_{l,m = 0\; m\geq l}^{s-1}\;
    \sum_{\qq=\max\{0,l+m+1-s\}}^l\mu_{\qq,l+m+1-\qq,l-\qq}.
  \end{equation}
\end{lemma}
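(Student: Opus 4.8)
The plan is to mimic the proof of Lemma~\ref{lem-changesum0}: view the triple sum on the left of \eqref{eq-changesum} as a sum of $\mu_{ijk}$ over an explicit region of $\Z^3$, apply the stated change of variables, and identify the image region. Concretely, the left-hand side sums $\mu_{ijk}$ over
\[
R = \bigl\{(i,j,k)\in\Z^3 : 0\le i<j\le s,\ 0\le k\le \lfloor (j-i-1)/2\rfloor\bigr\},
\]
and the substitution $l=i+k$, $m=j-k-1$, $\qq=i$ is a bijection of $\Z^3$ onto itself with inverse $i=\qq$, $j=l+m-\qq+1$, $k=l-\qq$. Hence it suffices to show that this bijection carries $R$ onto
\[
R' = \bigl\{(l,m,\qq)\in\Z^3 : 0\le l\le m\le s-1,\ \max\{0,l+m+1-s\}\le \qq\le l\bigr\},
\]
since then relabeling the summation index turns $\sum_{(i,j,k)\in R}\mu_{ijk}$ into $\sum_{(l,m,\qq)\in R'}\mu_{\qq,\,l+m+1-\qq,\,l-\qq}$, which is exactly the right-hand side of \eqref{eq-changesum}.

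The bulk of the work is the translation of the defining inequalities of $R$ through the substitution. Using $j-i-1 = l+m-2\qq$ together with the integrality of $\qq$, one gets $\lfloor (j-i-1)/2\rfloor = \lfloor (l+m)/2\rfloor - \qq$, so the constraint $k\le \lfloor (j-i-1)/2\rfloor$ becomes $l\le\lfloor (l+m)/2\rfloor$, which holds if and only if $l\le m$. The constraints $i\ge0$, $j\le s$, and $k\ge0$ become $\qq\ge0$, $\qq\ge l+m+1-s$, and $\qq\le l$, respectively. Everything else is then automatic: $m\le s-1$ follows by combining $\qq\le l$ with $\qq\ge l+m+1-s$; $l\ge0$ follows from $l=i+k$ with $i,k\ge0$; $j>i$ follows from $j-i = l+m-2\qq+1\ge 1$ using $2\qq\le 2l\le l+m$; and $i\le s$, $j\ge0$ are trivial on $R'$. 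Assembling these equivalences yields $(i,j,k)\in R \iff (l,m,\qq)\in R'$.

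I expect the only real difficulty to be bookkeeping: one must check that \emph{every} inequality cutting out $R$ is accounted for, that no spurious constraint sneaks into $R'$ (in particular that the bounds $0\le l$ and $m\le s-1$ appearing in the target sum are genuinely forced, not independently imposed), and that the floor in $k\le\lfloor (j-i-1)/2\rfloor$ is handled correctly---this is the one place where the parity of $2\qq$ is used. Once the identification $R\leftrightarrow R'$ is established, \eqref{eq-changesum} is immediate by reindexing, exactly as in the proof of Lemma~\ref{lem-changesum0}.
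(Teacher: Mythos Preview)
Your proposal is correct and follows essentially the same approach as the paper: both identify the index set on the left as a region in $\Z^3$, apply the same bijective change of variables (the paper uses the inverse map $i=\qq$, $j=l+m+1-\qq$, $k=l-\qq$ directly), and verify that the transformed region coincides with the index set on the right by translating each defining inequality. Your treatment is slightly more explicit about the floor (the paper drops it, using that $k\in\Z$ and $k\le \lfloor x\rfloor\iff k\le x$), but the logic is identical.
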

\begin{proof}
  The left sum is over the region
  $\{(i, j,k ) \in \Z^3: 0\leq i<j\leq s,\; 0\leq k \leq
  ({j-i-1})/{2}\}$. The change of variable
  $i = \qq$, $j = l+m+1-\qq$ and $k = l-\qq$, obviously transforms the region  to
  \begin{equation*}
        T_1 = \left\{(q, l, m) \in \Z^3:\;
    0\leq \qq<l+m+1-\qq\leq s,\;  0\leq l-\qq \leq  \hf (l+m)-\qq\right\}. 
  \end{equation*}
  The sum on the right hand side of~\eqref{eq-changesum} is over
  \begin{equation}
    \label{eq:T2}
    T_2  = \{(q, l, m) \in \Z^3:\;
    0 \le l \le m \le s-1,\; 0 \le q, \; \; l+m+1-s \le q\le l\},
  \end{equation}
  so it is enough to show that $T_1 = T_2$.

  Let $(\qq, l, m) \in T_1$. Then since $0\leq \qq<l+m+1-\qq\leq s$,
  we have
  \begin{equation}
    \label{eq:3}
    0 \le q \quand   l+m+1 -s \le q,
  \end{equation}
  two inequalities needed for membership in $T_2$.  Moreover, since
  $0\leq l-\qq \leq \hf (l+m)-\qq$, we have $q \le l$ and $l \le m$,
  which together with~\eqref{eq:3} implies that $0 \le l$ and
  $l+m+1 -s \le q \le l$. The latter, in particular, implies
  $m \le s-1$. Thus, having obtained all the inequalities
  in~\eqref{eq:T2}, we conclude that $T_1 \subseteq T_2$. It is also
  easy to show that $T_2 \subseteq T_1,$ so $T_1=T_2.$
\end{proof}

\begin{proof}[Proof of Lemma~\ref{lem-exps}.]
  By Lemma~\ref{lem:Gij-identity} and the symmetry of $\alpha_{ij}$, 
  \[
    \sum_{i,j=0}^{s}\alpha_{ij} G_{ij} 
    = \sum_{i=0}^s \alpha_{ii} G_{ii}
    + 2\sum_{i,j=0, \; j>i}^{s}\alpha_{ij} G_{ij}
    = S_\beta + S_\gamma + S_\delta, 
  \]
  where
  \begin{gather*}
    S_\beta = \sum_{i, j=0}^s \alpha_{ij} (-1)^{\frac{j-i}{2}} G_{\frac{i+j}{2},\frac{i+j}{2}},
    \quad
    S_\gamma =       2\sum_{i,j=0, j>i}^s\alpha_{ij}\sum_{k = 0}^{\lfloor\frac{j-i-1}{2}\rfloor}(-1)^{k+1}\nu_{i+k,j-k-1}H_{i+k,j-k-1},
    \\
    S_\delta = 2 \left(i+j\right) \sum_{i,j=0,\; j>i}^s\alpha_{ij}
    \sum_{k = 0}^{\lfloor\frac{j-i-1}{2}\rfloor}(-1)^{k}\nu_{i+k,j-k-1}
    F_{i+k, j-k-1}.
  \end{gather*}
  By the variable change \eqref{eq:changesum0} in Lemma~\ref{lem-changesum0}, we have 
  \[
    S_\beta = \sum_{l = 0}^s \left(\sum_{\qq = \max\{0,2l-s\}}^{\min\{2l,s\}}\alpha_{\qq,2l-\qq}\left(-1\right)^{l-\qq}\right)
    G_{ll} = \sum_{l=0}^s \beta_l G_{ll}
  \]
  where $\beta_l$ is as defined in~\eqref{eq-abeta}.
  Next, apply the variable change of
  Lemma~\ref{lem-changesum} to $S_\gamma$ and $S_\delta$.
  Then
  \[
    \begin{aligned}
      S_\gamma
      & =
      2\sum_{l,m=0,m\geq l}^{s-1}\left(\sum_{\qq=\max\{0,l+m+1-s\}}^{\min\{l,m\}}(-1)^{\min\{l,m\}+1-\qq}\alpha_{\qq,l+m+1-\qq}\right)\nu_{lm} H_{lm}\\
      & =  \sum_{l,m =0}^{s-1}\left(\sum_{\qq=\max\{0,l+m+1-s\}}^{\min\{l,m\}}(-1)^{\min\{l,m\}+1-\qq}\alpha_{\qq,l+m+1-\qq}\right) H_{lm}
      = \sum_{l,m =0}^{s-1} \gamma_{lm} H_{lm}.
    \end{aligned}
  \]
  Here we have used the identity 
  $2\sum_{l,m = 0,\; m\geq l }^{s-1}\nu_{lm}\mu_{lm} = \sum_{l,m = 0}^{s-1}\mu_{lm},$
  which holds for all  $\mu_{lm}$ satisfying $\mu_{lm} = \mu_{ml}$.
  Similarly, for $S_\delta$, we have 
  \begin{equation*}
    \begin{aligned}
      S_\delta
      =& \sum_{l,m =0}^{s-1}\left(\sum_{\qq=\max\{0,l+m+1-s\}}^{\min\{l,m\}}(-1)^{\min\{l,m\}-\qq}\alpha_{\qq,l+m+1-\qq}\left(l+m+1\right)\right) F_{lm} =
      \sum_{l,m =0}^{s-1} \delta_{lm} F_{lm}.
    \end{aligned}
  \end{equation*}
  The sum of these expressions for $S_\beta$, $S_\gamma$, and
  $S_\delta$ proves~\eqref{eq-exp}.
\end{proof}

\subsection{Proof of Lemma~\ref{lem-bgd}}

In this proof, we shall use the following combinatorial identities. 
\begin{lemma}
  \label{lem:comb}
  \begin{gather}
    \sum_{\qq = 0}^{2i}\left({\qq!(2i-\qq)!}\right)^{-1}(-1)^{i-\qq} = 0, \quad \text{for any integer }i\geq 1, \label{eq-ci1}\\
    \sum_{\qq = 0}^{i}\binom{i+j+1}{\qq}(-1)^{i-\qq} =
    \binom{i+j}{i} \quad \text{for any integers }i,j\geq 0.\label{eq-ci2}
  \end{gather}	
\end{lemma}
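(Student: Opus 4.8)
The plan is to derive both combinatorial identities from elementary binomial-coefficient manipulations; no generating functions are needed.

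For \eqref{eq-ci1}, I would clear denominators by multiplying through by $(2i)!$, turning the claim into $\sum_{\qq=0}^{2i}\binom{2i}{\qq}(-1)^{i-\qq}=0$. Pulling out $(-1)^i$ and using $(-1)^{-\qq}=(-1)^{\qq}$ rewrites this as $(-1)^i\sum_{\qq=0}^{2i}\binom{2i}{\qq}(-1)^{\qq}$, whose inner sum is $(1-1)^{2i}=0$ by the binomial theorem, since $2i\ge 2>0$. This settles the first identity and also explains the hypothesis $i\ge 1$: for $i=0$ the single summand equals $1$, so it is essential that the exponent on $(1-1)$ be positive.

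For \eqref{eq-ci2}, write $S=\sum_{\qq=0}^{i}(-1)^{i-\qq}\binom{i+j+1}{\qq}$ and apply Pascal's rule $\binom{i+j+1}{\qq}=\binom{i+j}{\qq}+\binom{i+j}{\qq-1}$ to split $S=A+B$ with $A=\sum_{\qq=0}^{i}(-1)^{i-\qq}\binom{i+j}{\qq}$ and $B=\sum_{\qq=0}^{i}(-1)^{i-\qq}\binom{i+j}{\qq-1}$. Reindexing $B$ via $r=\qq-1$ (the $r=-1$ term vanishes) gives $B=-\sum_{r=0}^{i-1}(-1)^{i-r}\binom{i+j}{r}$, so $A$ and $B$ cancel term by term except for the $\qq=i$ contribution to $A$, which is $\binom{i+j}{i}$. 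One can equivalently package this as an induction on $i$ with $j$ fixed, the base case $i=0$ being the trivial $1=1$ and the inductive step being precisely this Pascal-rule telescoping.

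I do not expect a genuine obstacle here; the only care needed is consistent sign bookkeeping via $(-1)^{i-\qq}=(-1)^{i}(-1)^{\qq}$ and the boundary term in the reindexing of $B$, both routine. Once Lemma~\ref{lem:comb} is available, the rest of the proof of Lemma~\ref{lem-bgd} amounts to substituting $\alpha_{ij}=(i!j!)^{-1}$ into the formulas~\eqref{eq-abeta}--\eqref{eq-adelta}, recognizing $\beta_i$ (for $2i\le s$) as the left-hand side of~\eqref{eq-ci1}, and, after factoring out $1/(i+j+1)!$, recognizing the sums defining $\gamma_{ij}$ and $\delta_{ij}$ (for $i+j\le s-1$) as rescalings of the left-hand side of~\eqref{eq-ci2}, then simplifying the resulting factorials.
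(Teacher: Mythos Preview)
Your proposal is correct. For \eqref{eq-ci1} it is identical in substance to the paper's argument: both multiply through by $(2i)!$ and recognize the alternating binomial sum $(1-1)^{2i}=0$ (the paper phrases this via the expansion of $x^i(1+x)^{2i}$ evaluated at $x=-1$, which is the same computation).

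For \eqref{eq-ci2} your route is slightly more direct than the paper's. The paper first proves the auxiliary identity $\sum_{q=0}^{i}\binom{l}{q}(-1)^{i-q}=\binom{l-1}{i}$ for all $l\ge 1$ and $i\le l$ by induction on~$l$, and then specializes to $l=i+j+1$. Your single application of Pascal's rule to $\binom{i+j+1}{q}$ followed by the reindexing of $B$ telescopes immediately to $\binom{i+j}{i}$, avoiding the auxiliary lemma and the separate induction; in effect you carry out the paper's inductive step once and observe that it already finishes the job. Either way the core mechanism is Pascal's rule plus cancellation, so the difference is one of packaging rather than of idea.
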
 
\begin{proof}
  To prove \eqref{eq-ci1}, we use the following binomial expansion
  for real $x$,
  \begin{equation*}
    x^i(1+x)^{2i} = x^i \sum_{\qq=0}^{2i}\binom{2i}{\qq}x^\qq = (2i)!\sum_{\qq = 0}^{2i}\left({\qq!(2i-\qq)!}\right)^{-1}x^{i+\qq}. 
  \end{equation*}
  The result follows by choosing   $x = -1$ and 
  replacing $(-1)^{i+q}$ with $(-1)^{i-q}$. 

  To prove \eqref{eq-ci2}, we will first show that given $l\geq 1$
  \begin{equation}\label{eq-ci3}
    \sum_{q=0}^i \binom{l}{q}(-1)^{i-q} = \binom{l-1}{i},\quad \text{for all }i \le l.
  \end{equation}
  Here a binomial coefficient
  $\binom{k}{i}$ is to be considered as zero when $i>k$, as happens for the  $i=l$ case above.  Obviously,
  \eqref{eq-ci3} holds for $l = 1$. To use induction on $l$, suppose
  \eqref{eq-ci3} holds for $l = \ii$ and for any $i\leq \ii$. Then
  using the identity
  $ \binom{\ii+1}{\qq} = \binom{\ii}{\qq}+\binom{\ii}{\qq-1}$ and the
  induction hypothesis, we have for $l=\ii+1$ and $i \le \ii$, 
  \begin{equation*}
    \begin{aligned}
      \sum_{\qq = 0}^{i} & \binom{\ii+1}{\qq}(-1)^{i-\qq} 
      =  \sum_{\qq = 0}^{i}\binom{\ii}{\qq}(-1)^{i-\qq}
      + \sum_{\qq = 1}^{i}\binom{\ii}{\qq-1}(-1)^{i-\qq} \\
      =& \sum_{\qq = 0}^{i}\binom{\ii}{\qq}(-1)^{i-\qq}  + \sum_{\qq = 0}^{i-1}\binom{\ii}{\qq}(-1)^{i-1-\qq} 
      = \binom{\ii-1}{i} + \binom{\ii-1}{i-1}
      = \binom{\ii}{i},
    \end{aligned}
  \end{equation*}
  i.e.,~\eqref{eq-ci3} holds for $l=\ii+1$ and $i \le \ii$.  The
  identity also holds for $l=\ii+1$ and $i=\ii+1$, as can be seen by
  choosing $x=-1$ in the binomial expansion of $(1+x)^{k+1}$. So we
  have shown that~\eqref{eq-ci3} holds for $l=\ii+1$ and
  $i \le \ii+1$.  Hence, by induction, \eqref{eq-ci3} holds for any
  $l \geq 1$. The identity \eqref{eq-ci2} follows by setting
  $l = i+j+1$ in~\eqref{eq-ci3}.
\end{proof}

\begin{proof}[Proof of  Lemma~\ref{lem-bgd}]
  From~\eqref{eq-abeta} of Lemma~\ref{lem-exps}, it is obvious that
  $\beta_0 = \alpha_{00} = 1$. When
  $1\leq i\leq s/2$, substituting $\alpha_{ij} = (i!j!)^{-1}$ into
  \eqref{eq-abeta} and using the \eqref{eq-ci1} of
  Lemma~\ref{lem:comb}, we obtain
  \begin{equation*}
    \beta_i = \sum_{\qq = 0}^{2i}\alpha_{\qq,2i-\qq}(-1)^{i-\qq} = 
    \sum_{\qq = 0}^{2i}(\qq!(2i-\qq)!)^{-1}(-1)^{i-\qq} = 0,
  \end{equation*}
  thus proving~\eqref{eq-ijbeta}. To prove \eqref{eq-ijgamma}, due to
  the symmetry of $\gamma_{ij}$, we proceed assuming without loss of
  generality that $j\geq i$. Then, for $i+j\leq s -1$, \eqref{eq-agamma} yields
  \begin{equation*}
    \begin{aligned}
      \gamma_{ij}
      & = \sum_{\qq=0}^{i} (-1)^{i+1-\qq}\alpha_{\qq,i+j+1-\qq}
      = \sum_{\qq=0}^{i}\frac{ (-1)^{i+1-\qq}  }{\qq!(i+j+1-\qq)!} \\
      &= -\left((i+j+1)!\right)^{-1}\sum_{\qq=0}^{i}\binom{i+j+1}{q}(-1)^{i-\qq} \\
      &= -\left((i+j+1)!\right)^{-1}\binom{i+j}{i}
      = -(i!j!(i+j+1))^{-1},
    \end{aligned}
  \end{equation*}
  where we have used \eqref{eq-ci2} of Lemma~\ref{lem:comb}. This
  proves~\eqref{eq-ijgamma}.  Finally, to prove \eqref{eq-ijdelta},
  note that~\eqref{eq-adelta} implies that
  $\delta_{ij} = -(i+j+1) \gamma_{ij}$. Hence the
  result $\delta_{ij}= (i!j!)^{-1}$ for $i+j\leq s-1$ follows immediately from the just established expression for $\gamma_{ij}$.
\end{proof}

\subsection{Proof of Lemma \ref{lem-zrsk}}\label{sec-proofzrsk}
\hfill

{\em Step~1.} It is immediate from \eqref{eq-ijbeta} and the
definition of $\zeta$ that $\zeta> s/2$. In fact
$\zeta = \lfloor s/2\rfloor + 1$. To see this, apply~\eqref{eq-abeta}
with $i=\lfloor s/2\rfloor + 1$, noting that $2i$ is either
$s+1$ or $s+2$.  Then applying \eqref{eq-ci1} to \eqref{eq-abeta} gives
\[
  \beta_{\lfloor s/2\rfloor + 1} =
  \left(
    \sum_{\qq = 0}^{2i} - \sum_{q=0}^{2i-(s+1)} 
    - \sum_{q=s+1}^{2i} 
  \right) 
  \alpha_{\qq,2i-\qq}(-1)^{i-\qq}  =
  - \left(\sum_{q=0}^{2i-(s+1)}+\sum_{\qq = s+1}^{2i} \right)
  \alpha_{\qq,2i-\qq}(-1)^{i-\qq}.
\]
With the variable change $l = 2i-q$ and the symmetry $\alpha_{ij} = (i!j!)^{-1}= \alpha_{ji}$, one can get 
\[
\beta_{\lfloor s/2\rfloor + 1} =
- \sum_{l=s+1}^{2i}\alpha_{2i-l,l}(-1)^{l-i} - \sum_{\qq = s+1}^{2i}
\alpha_{\qq,2i-\qq}(-1)^{i-\qq} = -2\sum_{\qq = s+1}^{2i}
\alpha_{\qq,2i-\qq}(-1)^{i-\qq}.
\]
This is a sum of one or two terms which can be easily verified to be nonzero. Hence $\beta_{\lfloor s/2\rfloor + 1}\neq 0,$  so $\zeta = \lfloor s/2\rfloor +1$.
	
{\em Step~2.} From~\eqref{eq-ijgamma}, we know
that in particular, for all $ 0\leq i,j \leq (s-1)/2,$
we have
$\gamma_{ij} = -(i!j!(i+j+1))^{-1}$. Let 
$\DD = \diag\left(0!,1!,2!,\cdots, \left\lfloor \frac{s-1}{2}\right\rfloor!\right).
$ Then 
\begin{equation}
  \label{eq:GammaMat}
  \Gamma_{\left\lfloor \frac{s+1}{2}\right\rfloor} =
  -\DD^{-1} \HH_{\left\lfloor \frac{s+1}{2}\right\rfloor}
  \DD^{-1}
\end{equation}
where $\HH_m$ denotes the $m \times m$ Hilbert matrix. Since Hilbert matrices are positive definite, the matrix in~\eqref{eq:GammaMat} is  negative definite. 
Hence $\rho \geq \lfloor(s+1)/2\rfloor$. 
	
{\em Step~3.} Since
$\tilde{\delta}_{ij} = \delta_{ij} - (i!j!)^{-1}$, using
\eqref{eq-ijdelta}, we have $\tilde{\delta}_{ij} = 0$ when
$i+j\leq s-1$. Hence we have $\sigma \geq s-1$.  Using \eqref{eq-adelta} and \eqref{eq-ci2}, it is easy to check that
$\tilde{\delta}_{ij}$ is nonzero when $i + j = s$. Hence
$\sigma = s-1$.
	
{\em Step~4.} Note that $2\zeta \geq s+1$, $2\rho + 1\geq s+1$, and $\sigma+2 = s+1$. As a result, we have $\kappa = \min(2\zeta,2\rho+1,\sigma+2) = \sigma + 2 = s+1$. 
\hfill$\Box$

\subsection{Proof of Lemma \ref{lem-est}}\label{sec-proofest}

By Lemma~\ref{lem-zrsk},
$
  \sum_{i = 0}^s \beta_i G_{ii} = \beta_0 G_{00} + 
  \sum_{i = \zeta}^s \beta_i G_{ii},
$
so
the first inequality of the lemma~\eqref{eq-beta} is immediately obtained
by applying \eqref{eq-xivest-norm} of
Proposition~\ref{prop-xivest-norm-rmd}.

To prove~\eqref{eq-estgamma}, since $\Gamma_\rho < 0 $ is negative
definite, there exists a constant $C_-<0$ such that
$\Gamma_{\rho} - C_- I_{\rho}<0$ remains negative definite. A simple
argument (see~\cite[Lemma 2.3]{sun2017rk4}) then proves that
$\sum_{i,j=0}^{\rho-1}[\Gamma_\rho - C_-I_\rho]_{ij}
\ip{X_iv,X_jv}_{\tau D} \le 0.$ Hence
\begin{equation}\label{eq-estgm1}
  \sum_{i,j = 0}^{\rho-1}\gamma_{ij} H_{ij}
  \leq C_-\sum_{\jj = 0}^{\rho -1} H_{ll}.
\end{equation}
The remaining summands on the left hand side of~\eqref{eq-estgamma}
involve indices with $\max(i,j)\geq \rho$.  By the Cauchy--Schwarz
inequality
$\left|\ip{X_iv,X_jv}_{\tau D}\right|\leq\snm{X_iv}_{\tau
  D}\snm{X_jv}_{\tau D},$ and if $i \ge \rho$, then
\eqref{eq-xivest-snm} of Proposition~\ref{prop-xivest-norm-rmd} yields
$\snm{X_iv}_{\tau D} \leq C\tau^{1/2}\nm{X_{\rho}v}_{M_0}.$ Thus
\begin{equation}\label{eq-estgm22}
  \left|\gamma_{ij} H_{ij}\right|
  \le C\tau G_{\rho\rho},
  \quad\text{ for $i \ge \rho$ and $j\ge \rho$}.
\end{equation}
In case only one of $i$ or $j$ is greater than or equal to $\rho$, say
$i\leq \rho-1$ and $j\geq \rho$ without loss of generality, then in
addition to the Cauchy--Schwarz inequality, we also apply the
inequality $ab \leq \veps a^2 + (4\veps)^{-1} b^2$, for any $0<\varepsilon<\varepsilon_0$ with $\varepsilon_0$ to be specified, to get
$\left|\ip{X_iv,X_jv}_{\tau D}\right|\leq  \snm{X_iv}_{\tau D}\snm{X_j v}_{\tau D} \leq \veps\snm{X_iv}_{\tau D}^2 + (4\veps)^{-1}\snm{X_jv}_{\tau D}^2.
$ Bounding the term with the larger index using 
\eqref{eq-xivest-snm}, we have
\begin{equation}\label{eq-estgm21}
  \left|H_{ji}\right| =\left|H_{ij}\right|\leq
  \veps H_{ii}+C\veps^{-1}\tau G_{\rho\rho},
  \quad\text{ for $i \le \rho-1$ and $j\ge \rho$}.
\end{equation}
Combining \eqref{eq-estgm1}, \eqref{eq-estgm22}, and \eqref{eq-estgm21}, it gives
\begin{equation*}
  \begin{aligned}
    \sum_{i,j=0}^{s-1}\gamma_{ij} H_{ij}
    \leq   C\left(1+\veps^{-1}\right)\tau G_{\rho\rho} +
    \left(C_-+C\veps\right)\sum_{\jj=0}^{\rho-1}H_{ll}.
  \end{aligned}
\end{equation*}
Choosing $\veps_0$ small enough so that $C_-+ C\veps_0\leq C_-/2$, we have
proven~\eqref{eq-estgamma} with
${C}_{\gamma,+} = C\left(1+\veps^{-1}\right) $ and
$C_{\gamma,-} = C_-/2$.

It only remains to prove~\eqref{eq-estdelta}.  In its left hand sum, by definition of $\sigma$ in Definition \ref{def:crit_ind},
only summands with indices in
$T_\sigma = \{(i, j) \in \Z^2: i+j>\sigma$ and $0\le i, j \le s-1\}$
are nontrivial. The summands can be bounded by
the estimates of Proposition~\ref{prop-opest-norm} and~\ref{prop-xivest-norm-rmd}
$F_{ij} \le C \tau
\nm{X_iv}_{M_0}\nm{X_jv}_{M_0} \le C \tau^{i+j+1} \| v \|_{M_0}^2.
$
Hence
\begin{align*}
  \sum_{i,j = 0}^{s-1}\tilde{\delta}_{ij}F_{ij}
  &
    =  \sum_{(i,j) \in T_\sigma}\tilde{\delta}_{ij}F_{ij}
    \le
    C \sum_{(i,j) \in T_\sigma} \tau^{i+j+1} G_{00}.
\end{align*}
Since $i+j+1 \ge \sigma+2$ for $(i,j) \in T_\sigma$, the inequality~\eqref{eq-estdelta} follows. \hfill$\Box$

\subsection{Proof of Lemma \ref{lem-lowXk}}\label{sec-prooflowXk}

We use induction. Note that $X_0 = I$ admits the described
form~\eqref{eq-lowXk}. Assuming that~\eqref{eq-lowXk} holds for
$i = \ii$, we need to prove that it holds for $i = \ii+1$. Subtracting
the recursive defining equation~\eqref{eq-Zi} of $Z_{\ii+1}$  from
that of $X_{\ii+1}$ (namely~\eqref{eq-Xi}), and using~\eqref{eq-lowA},
\[
      X_{\ii+1} - Z_{\ii+1}
      = \tau M_0^{-1}\left(A_1 + \ii M_1\right)(X_\ii - Z_\ii).
\]
\zs{Using the induction hypothesis $X_\ii   - Z_\ii= \delta^\ii  K^\ii$ and the definitions of $A_1$ and $M_1$,} we obtain,
for any $v \in V_h$,
\begin{align*}
  \left(X_{\ii+1} - Z_{\ii+1}\right)v
  &=  -\tau M_0^{-1}
    \left(\delta\, \divg_x\left(f(\delta^\ii  K^\ii v)\right)
    - \ii \, f(\delta^\ii  K^\ii v) \,\grad_x \delta\right)
  \\
    &=  -\tau M_0^{-1}
    \left(\delta\, \divg_x\left(\delta^\ii f(  K^\ii v)\right)
    - \ii \, \delta^\ii  f( K^\ii v) \,\grad_x \delta\right)
  \\
    &=  -\tau M_0^{-1} 
    \left(\delta^{\ii+1}\, \divg_xf(  K^\ii v) \right).
\end{align*}
\zs{Here we have used the fact that $f$ is homogeneous of degree 1 (recall \eqref{eq-gf}) in the second equality and the product rule for differentiation in the third equality.} Since $M_0$ acts point-wise (see \eqref{eq-lowM0M1}) the last
expression is the same as
$ \delta^{\ii+1} (-\tau M_0^{-1} \divg_xf )\circ K^\ii v =
\delta^{\ii+1} K^{\ii+1} v,$ thus establishing the formula~\eqref{eq-lowXk} for $i=\ii+1.$\hfill$\Box$

\subsection{Proof of Lemma \ref{lem-Zj}}\label{sec-proofZ}

We start by proving a preparatory bound on the norm of $\nm{L v}$.
\begin{lemma}\label{lem-L}
  For all $v \in V_h$,
  \[
    \nm{L v}\leq C\tau^{-\hf} \snm{v}_{\tau D} \leq C\nm{v}.
  \]
\end{lemma}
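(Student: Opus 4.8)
The plan is to exploit that $Lv \in V_h$, so that $\nm{Lv} = \sup_{0\neq w\in V_h} \ip{Lv,w}/\nm{w}$, and to bound $\ip{Lv,w}$ directly from the defining identity $\ip{Lv,w} = \ip{\delta\mathcal{D}\lb v\rb,\{w\}}_{\Face^\vv} + \ip{\delta S\lb v\rb,\lb w\rb}_{\Face^\vv}$. Splitting $\delta = \delta^{1/2}\cdot\delta^{1/2}$, and $S=S^{1/2}\cdot S^{1/2}$ in the second term, and applying the Cauchy--Schwarz inequality on $\Face^\vv$, I would reduce the task to bounding a sum of products of facet-norm factors: two depending on $w$, namely $\ip{\delta\{w\},\{w\}}_{\Face^\vv}^{1/2}$ and $\ip{\delta S\lb w\rb,\lb w\rb}_{\Face^\vv}^{1/2}$, and two depending on $v$, namely $\ip{\delta\mathcal{D}\lb v\rb,\mathcal{D}\lb v\rb}_{\Face^\vv}^{1/2}$ and $\ip{\delta S\lb v\rb,\lb v\rb}_{\Face^\vv}^{1/2}$.

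The $w$-factors are controlled by pure scaling. A discrete trace inequality together with an inverse inequality on the shape-regular mesh gives $\|\phi\|_{L^2(F)}\le Ch^{-1/2}\|\phi\|_{L^2(K_F)}$ for every $\phi\in V_h$ and every facet $F$ of an element $K_F$; summing over $\Face^\vv$, and using that each element abuts boundedly many facets, yields $\ip{\{w\},\{w\}}_{\Face^\vv}^{1/2} + \ip{\lb w\rb,\lb w\rb}_{\Face^\vv}^{1/2}\le Ch^{-1/2}\nm{w}$. Since $\delta\le Ch$ and $S$ is a fixed matrix, the factor $h^{-1/2}$ is absorbed by $\delta^{1/2}\le(Ch)^{1/2}$, so $\ip{\delta\{w\},\{w\}}_{\Face^\vv}^{1/2}\le C\nm{w}$ and $\ip{\delta S\lb w\rb,\lb w\rb}_{\Face^\vv}^{1/2}\le C\nm{w}$. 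Therefore $\nm{Lv}\le C\bigl(\ip{\delta\mathcal{D}\lb v\rb,\mathcal{D}\lb v\rb}_{\Face^\vv}^{1/2} + \ip{\delta S\lb v\rb,\lb v\rb}_{\Face^\vv}^{1/2}\bigr)$.

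It remains to bound the two $v$-factors by $\tau^{-1/2}\snm{v}_{\tau D}$. For the second factor I would use an explicit formula for the dissipation seminorm: combining the integration-by-parts expression for $\ip{Aw,v}$ displayed just before~\eqref{eq-lowA} with $D = -(A^\top + A + M_1)$, the volume term and the $M_1$ term telescope against the numerical-flux terms (here $\mathcal{D}$ is single-valued on facets by the standing assumption $\sum_j\partial_j\mathcal{L}^{(j)}=0$), so that, as in \cite[Lemma~3.2]{DrakeGopalSchob21}, $\snm{v}_{\tau D}^2 = -\tau\ip{(A^\top + A + M_1)v,v} = 2\tau\,\ip{\delta S\lb v\rb,\lb v\rb}_{\Face^\vv}$; hence $\ip{\delta S\lb v\rb,\lb v\rb}_{\Face^\vv}^{1/2} = (2\tau)^{-1/2}\snm{v}_{\tau D}$. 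The first $v$-factor, the one involving $\mathcal{D}\lb v\rb$, is the main obstacle, since it is \emph{not} controlled by $\snm{v}_{\tau D}$ for an arbitrary positive semidefinite $S$ (for a central flux, $S=0$ with $\mathcal{D}\neq0$, the claimed bound would already fail); here one must use that the stabilization is dissipative enough for the upwind-type flux under consideration, i.e.\ that $\mathcal{D}^2\le CS$ as symmetric matrices uniformly in the facet normal. This gives $\ip{\delta\mathcal{D}\lb v\rb,\mathcal{D}\lb v\rb}_{\Face^\vv} = \ip{\delta\mathcal{D}^2\lb v\rb,\lb v\rb}_{\Face^\vv}\le C\ip{\delta S\lb v\rb,\lb v\rb}_{\Face^\vv}$, so this factor too is $\le C\tau^{-1/2}\snm{v}_{\tau D}$. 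Combining these estimates proves $\nm{Lv}\le C\tau^{-1/2}\snm{v}_{\tau D}$, and the second inequality of the lemma then follows at once from $\tau^{-1/2}\snm{v}_{\tau D} = \ip{Dv,v}^{1/2}\le\nm{D}^{1/2}\nm{v}\le C\nm{v}$, using Proposition~\ref{prop-opest-norm}.
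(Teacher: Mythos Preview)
Your proof is correct and follows essentially the same route as the paper: bound $\ip{Lv,w}$ by Cauchy--Schwarz on facets, absorb the $w$-factors via the discrete trace inequality together with $\delta\le Ch$, and identify the $v$-factors with $\snm{v}_{\tau D}$ through the formula $\snm{v}_{\tau D}^2=2\tau\,\ip{\delta S\lb v\rb,\lb v\rb}_{\Face^\vv}$.

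The one substantive difference is how the $\mathcal{D}$-term is handled. The paper first bounds \emph{both} facet terms by $C\ip{\delta\lb v\rb,\lb v\rb}_{\Face^\vv}^{1/2}$, using only boundedness of $\mathcal{D}$ and $S$, and then invokes positive \emph{definiteness} of $S$ (smallest eigenvalue $\lambda>0$) to obtain $\ip{\delta\lb v\rb,\lb v\rb}_{\Face^\vv}\le\lambda^{-1}\ip{\delta S\lb v\rb,\lb v\rb}_{\Face^\vv}$. You instead impose the structural inequality $\mathcal{D}^2\le CS$. Your hypothesis is slightly more general---it covers genuine upwinding $S=\tfrac12|\mathcal{D}|$ even when $\mathcal{D}$ has a kernel---whereas the paper's assumption is stronger but agnostic to any relation between $S$ and $\mathcal{D}$. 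Either way, as you correctly note, some lower bound on $S$ is essential; the estimate fails for the central flux $S=0$.
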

\begin{proof}
  The second inequality can be obtained by applying Proposition \ref{prop-xivest-norm-rmd}. We now prove the first inequality. Using inverse estimates and the fact $\nm{\delta}_{L^\infty}\leq Ch$, it can be seen that
\[
    \ip{\delta  \{w\},\{w\}}_{\Face^\vv} ^\hf \leq C\nm{w}\quand 
    \ip{ \delta  \lb w\rb, \lb w\rb }_{\Face^\vv} ^\hf \leq C\nm{w}.
\]
Hence using the Cauchy--Schwarz inequality and the fact that $\cD$ and $S$ are bounded, we get
\begin{align*}
	\ip{Lv,w} \leq & C\ip{ \delta \lb v\rb,\lb v\rb}_{\Face^\vv}^\hf \ip{ \delta   \{w\},  \{w\}}_{\Face^\vv}^\hf + C\ip{ \delta \lb v\rb, \lb v\rb}_{\Face^\vv}^\hf\ip{ \delta  \lb w\rb,  \lb w\rb}_{\Face^\vv}^\hf\\
	\leq & C\ip{ \delta \lb v\rb,\lb v\rb}_{\Face^\vv}^\hf\nm{w}.
\end{align*}
Taking $w = Lv$,
we deduce that 
\begin{equation}
  \label{eq:4nmLv}
\nm{Lv} \leq C\ip{ \delta \lb v\rb,\lb v\rb}^\hf_{\Face^\vv}.  
\end{equation}
By  \cite[Lemma 3.2]{DrakeGopalSchob21}, we have 
$	\snm{v}_{\tau D}^2 = 2\tau \ip{ \delta S\lb v\rb, \lb v\rb}_{\Face^\vv}.
$
Letting $\lambda>0$ denote  the smallest eigenvalue of the positive definite matrix $S$, we have
$\delta \lb v\rb\cdot \lb v\rb \le \lambda^{-1} \delta S\lb v\rb \cdot \lb v\rb$. Integrating and using~\eqref{eq:4nmLv},
\[
  C \nm{Lv}^2 \leq \ip{ \delta \lb v\rb,\lb v\rb}_{\Face^\vv}
  \le \frac{1}{\lambda } \ip{ \delta S\lb v\rb , \lb v\rb}
  =  \frac{1}{2\lambda \tau} |v|_{\tau D}^2.
\]
\end{proof}

\begin{proof}[Proof of Lemma \ref{lem-Zj}]
  It suffices to show that 
  \begin{equation}\label{eq-ZjI}
    \nm{Z_i v} \leq C\tau^\hf \sum_{\jj=0}^{i-1} \snm{X_\jj v}_{\tau D}, \quad \forall i \geq 0. 
  \end{equation}
  Indeed,~\eqref{eq-ZjI} implies
  \begin{equation*}
    \nm{Z_iv}_{M_0}^2\leq C\nm{Z_iv}^2 \leq C\left(\tau^\hf \sum_{\jj=0}^{i-1} \snm{X_\jj v}_{\tau D}\right)^2 \leq C\tau \sum_{\jj=0}^{i-1} \snm{X_\jj v}_{\tau D}^2 = C\tau \sum_{\jj=0}^{i-1} H_{\jj \jj},
  \end{equation*}
  thus completing the proof of \eqref{eq-Zj}.
	
  To prove \eqref{eq-ZjI}, we use induction on $i$. Since $Z_0 = 0$,
  the inequality~\eqref{eq-ZjI} certainly holds for the base case
  $i=0$.  Assume \eqref{eq-ZjI} holds for $i = \ii$. By the definition of $Z_{\ii+1}$ in~\eqref{eq-Zi} and the triangle inequality, we have 
\begin{align*}
\nm{Z_{\ii+1}v}& \le \zs{\tau \nm{M_0^{-1}\left(A+(\ii+1)M_1\right)}\nm{Z_\ii v}+\tau \nm{M_0^{-1}}\nm{LZ_\ii v} + \tau \nm{M_0^{-1}L X_{\ii }v}}\\
& \le  C\nm{Z_\ii v} + C\nm{L Z_\ii v}+ C\tau \nm{L X_{\ii }v}\\
& \le  C\nm{Z_\ii v}+C\tau^\hf \snm{X_\ii v}_{\tau D}\\
& \le  C\tau^\hf\sum_{\jj = 0}^{\ii}\snm{X_\jj v}_{\tau D}.
\end{align*}
Here we have applied Proposition \ref{prop-opest-norm} (and $\tau \leq 1$) in the second inequality, Lemma \ref{lem-L} in the third inequality, and the induction hypothesis in the last inequality. Therefore, \eqref{eq-ZjI} holds for $i = \ii+1$ and hence for all $i\geq 0$. 
\end{proof}

\subsection{Proof of Lemma~\ref{lem-low}}

We first prove \eqref{eq-lowXj}. Since $K^{p+1} v = 0$ for
$v \in P_p(K)$, we have $X_{p+1} = Z_{p+1}$. Therefore, using
Proposition \ref{prop-xivest-norm-rmd} and Lemma \ref{lem-Zj}, \zs{it can be shown that for any $i\geq p+1$,}
\begin{equation}\label{eq-normXjv}
  \begin{aligned}
    G_{ii} &= \nm{X_iv}_{M_0}^2\leq C\tau^{2i-2(p+1)} \nm{X_{p+1} v}_{M_0}^2\\
    &= C\tau^{2i-2(p+1)} \nm{Z_{p+1} v}_{M_0}^2\leq C \tau^{2i-2p-1} \sum_{\jj=0}^{p}H_{\jj \jj}\leq C \tau \sum_{\jj=0}^{p}H_{\jj \jj}.
  \end{aligned}
\end{equation}
Next, to prove~\eqref{eq-lowXij}, we apply the Cauchy--Schwarz inequality,
Proposition~\ref{prop-opest-norm}, and \eqref{eq-xivest-norm} to get
\begin{equation*}
  \begin{aligned}
    \left|F_{ij}\right| = \left|\ip{X_iv,X_jv}_{\tau M_1}\right|\leq C \tau \nm{X_iv}_{M_0}\nm{X_jv}_{M_0}\leq C\tau^{i+j-p}\nm{v}_{M_0}\nm{X_{p+1}v}_{M_0}.
  \end{aligned}
\end{equation*}
Invoking \eqref{eq-normXjv} with $i = p+1$,
\begin{equation}
  \left|F_{ij}\right|\leq C\tau^{i+j-p}G_{00}^\hf G_{p+1,p+1}^\hf \leq \left(C\tau^{i+j-p+\hf} G_{00}^\hf\right)\left(\sum_{\jj = 0}^{p}H_{\jj \jj}\right)^\hf,
\end{equation}
which yields \eqref{eq-lowXij} after applying the inequality $ab\leq (4\veps)^{-1}a^2 + \veps b^2$.\hfill$\Box$

\section{Conclusion}\label{sec-conclusion}

We have presented a systematic stability analysis of the SAT methods
for MTP schemes for solving linear hyperbolic equations.  We proved
the conjecture formulated in~\cite{DrakeGopalSchob21}, that the SAT
method is weakly stable under the $(1+1/s)$-CFL condition, is
true. The analysis in this paper generalizes the results in
\cite{sun2019strong} by including an affine linear time-dependent mass
matrix. Furthermore, improved stability estimates are obtained for
symmetric linear hyperbolic systems with piecewise constant coefficients and
with DG discretizations. With $P_0$-DG spatial discretization, the SAT
timestepping was proved to be strongly stable under the usual CFL
condition for any temporal order $s$. With $P_p$-DG spatial
discretization, the SAT scheme is weakly stable under the
$(1+{1/(2s-2p)})$-CFL condition when $0<p\leq (s-1)/2$. The estimates
are numerically verified to be sharp in each subtent for the
one-dimensional linear advection equation.  Finally, it is our hope
that the new understanding presented in our analysis will inspire
further ideas to improve numerical strategies for explicit
time-stepping on unstructured advancing fronts using tents. Of
particular interest is the development of a tent-based scheme that is
strongly stable under the usual CFL condition. Stabilization
techniques with artificial viscosity
\cite{sun2021enforcing,offner2020analysis} and the relaxation time
stepping methods \cite{ketcheson2019relaxation,ranocha2020relaxation}
may be promising avenues.

\section*{Acknowledgments}

The work of the first author was partially supported by the NSF grant DMS-1912779. The work of the second author was partially supported by the NSF grant DMS-2208391. We thank Dr. Jin Jin at John Hopkins University for helpful discussions that motivated the proof of Lemma \ref{lem-lowXk}.

\bibliography{refs}
\bibliographystyle{abbrv}

\end{document}